\documentclass[12pt]{amsart}

\usepackage{mathtools}
\def\mbinom#1#2{\ensuremath{\left(\kern-.3em\left(\genfrac{}{}{0pt}{}{#1}{#2}\right)\kern-.3em\right)}}

\usepackage[color,matrix,arrow, all, arc]{xy}
\usepackage{graphicx}
\usepackage{color}
\usepackage{caption}
\usepackage{etoolbox}
\usepackage[margin=1in]{geometry}
\usepackage{amsmath,amsthm,amssymb,tikz,tikz-cd}
\usepackage{stmaryrd}
\usepackage{aliascnt}
\usepackage{xcolor}
\definecolor{darkspringgreen}{rgb}{0.14, 0.7, 0.3}
\definecolor{melon}{rgb}{0.1, 0.5, 1}
\usepackage[colorlinks, citecolor = darkspringgreen, linkcolor = melon]{hyperref}
\usepackage{cleveref}
\usepackage{thmtools}
\usepackage{comment}
\usepackage{tikz}
\usepackage[normalem]{ulem}
\usetikzlibrary{positioning,calc,arrows.meta,shapes.geometric,fit,decorations.markings} %decorations.markings lets us draw arrows on circles, used in fig: ice fork coloring

\def\AB#1{(\textcolor{blue}{#1})}

\newcommand{\cS}[1]{{\noindent\textsf{\color{purple}$\blacksquare$~#1~$\blacksquare$}}} 

\newtheorem{thm}{Theorem}[section]
\newtheorem{cor}[thm]{Corollary}
\newtheorem{prop}[thm]{Proposition}
\newtheorem{lem}[thm]{Lemma}
\newtheorem{conj}[thm]{Conjecture}

\theoremstyle{definition}
\newtheorem{defn}[thm]{Definition}

\newtheorem{exmp}[thm]{Example}

\newtheorem{definition}[thm]{Definition}

\newtheorem{theorem}[thm]{Theorem} %[section]
\newtheorem{lemma}[thm]{Lemma}

\theoremstyle{remark}
\newtheorem{rem}[thm]{Remark}

\newtheorem{obs}[thm]{Observation}

\makeatletter
\let\c@equation\c@thm
\makeatother
\numberwithin{equation}{section}

\makeatletter
\def\@tocline#1#2#3#4#5#6#7{\relax
  \ifnum #1>\c@tocdepth % then omit
  \else
    \par \addpenalty\@secpenalty\addvspace{#2}%
    \begingroup \hyphenpenalty\@M
    \@ifempty{#4}{%
      \@tempdima\csname r@tocindent\number#1\endcsname\relax
    }{%
      \@tempdima#4\relax
    }%
    \parindent\z@ \leftskip#3\relax \advance\leftskip\@tempdima\relax
    \rightskip\@pnumwidth plus4em \parfillskip-\@pnumwidth
    #5\leavevmode\hskip-\@tempdima
      \ifcase #1
       \or\or \hskip 1em \or \hskip 2em \else \hskip 3em \fi%
      #6\nobreak\relax
    \hfill\hbox to\@pnumwidth{\@tocpagenum{#7}}\par% <---- \dotfill -> \hfill
    \nobreak
    \endgroup
  \fi}
\makeatother

\newcommand{\N} { \mathbb{N}}

\newcommand{\Z} { \mathbb{Z}}

\DeclareMathOperator{\sign}{sign}

\newcommand{\mut} {\text{mut}}
\newcommand{\cut}{\setminus}
\makeatletter
\def\subsection{\@startsection{subsection}{3}%
  \z@{.5\linespacing\@plus.7\linespacing}{.1\linespacing}%
  {\bfseries}}
\makeatother

\title{Eventual Sign Coherence}
\author{Amanda Burcroff}
\address{\hspace{-.3in} Department of Mathematics, MIT,
Cambridge, MA 02139, USA}
\email{\href{mailto:amandabu@mit.edu}{amandabu@mit.edu}}
\thanks{}
\author{Scott Neville}
\address{Laboratoire d'Algèbre, de Combinatoire et d’informatique Mathématique (LACIM)\\ Université du Québec à Montréal\\ Montréal, Québec, Canada }
\email{\href{mailto:nevilles@umich.edu}{nevilles@umich.edu}} 
\thanks{\emph{Funding.} The first author was supported by the NSF GRFP, NSF grant DMS 2503411, the Jack Kent Cooke Foundation, the UC PPFP, and the MIT School of Science.
The second author was supported by a predoctoral fellowship from the University of Michigan, the NSF Grant No. DMS-1929284, while he was in residence at ICERM in Providence during the Categorification and Computation in Algebraic Combinatorics semester program, and DMS-1840234, while a graduate student, as well as the NSERC Discovery Grant RGPIN-2022-03960 and the Canada Research Chairs program, grant number CRC-2021-00120.}

\keywords{sign coherence, ice quiver, asymptotic sign coherence, $c$-vector, brog quiver}

\begin{document}
\begin{abstract} The sign coherence of $c$-vectors is one of the fundamental theorems of cluster algebras with principal coefficients.  In 2019, Gekhtman and Nakanishi posed the \emph{asymptotic sign coherence conjecture} for arbitrary cluster algebras of geometric type, which says sign coherence should eventually hold in any sufficiently generic infinite mutation sequence.  
We prove that their conjecture holds almost always for skew-symmetric cluster algebras of arbitrary rank.  That is, we prove that with probability $1$, the sequence of $c$-vectors obtained by random mutation of an arbitrary  quiver eventually becomes sign-coherent. 
Our results also establish the conjecture in full generality for many families of quivers by studying a new class of \emph{brog} quivers.
\end{abstract}

\maketitle

%\tableofcontents

\section{Introduction}

\emph{Cluster algebras} are commutative rings that admit a combinatorial framework for constructing distinguished generators with many desirable properties.  A dizzying variety of fields have benefited from the study of cluster algebras, including mathematical physics, differential geometry, representation theory, Lie theory, integrable systems, and Teichmuller theory.  In the most common setting, these algebras are essentially determined by the combinatorial data of a \emph{quiver} or, equivalently, a skew-symmetric matrix.  A quiver is a directed graph with no oriented $1$ or $2$-cycles on which we can perform \emph{mutations}, which are elementary transformations of the quiver associated to each vertex.  
This mutation combinatorics is surprisingly rich and leads to many deep properties of cluster algebras, such such as the Laurent phenomenon, Laurent positivity, the finite-type classification, the existence of nice bases, and the sign coherence phenomenon.  The main goal of our paper is proving that sign coherence is much more prevalent than expected.
%As one example, if your initial quiver is acyclic (as a directed graph), then every mutation-equivalent acyclic quiver will differ only by an isomorphism after reorienting some of the arrows.

The remarkable sign coherence phenomenon is often stated purely combinatorially, but the only known proofs rely on heavy machinery from representation theory and algebraic geometry. Partition the vertices of the quiver into sets of \emph{mutable vertices} (whose cardinality $n$ is the \emph{rank} of the quiver) and \emph{frozen} vertices, and forbid mutations at the frozen vertices. Each mutable vertex $i$ in a quiver has an associated \emph{$c$-vector}, whose entries record the arrows between $i$ and each frozen vertex $u$: $(c_i)_u = \# (i \rightarrow u) - \# (u \rightarrow i)$.
A quiver is \emph{principally framed} if the $c$-vectors are precisely the standard basis vectors of $\mathbb{R}^n$, which yields \emph{principal coefficients} for the corresponding cluster algebra. The \emph{sign coherence phenomenon} states after any sequence of mutations on a principally framed quiver, the new $c$-vectors will be \emph{sign-coherent}, meaning that for each $i$, the entries of $c_i$ will either be non-negative or non-positive.  It was originally conjectured in 2007 by Fomin and Zelevinsky \cite{FZiv} and was not fully resolved until 2018 \cite{GHKK}.

The sign coherence of $c$-vectors has shown up in surprising ways in the study of cluster algebras.  It is equivalent to the $F$-polynomial of a cluster algebra with principal coefficients having constant term $1$.  These $F$-polynomials were given a representation-theoretic interpretation in the more general setting of quivers with potential by Derksen-Weyman-Zelevinsky \cite{DWZ2}, and this connection enabled them to first prove sign coherence for principally-framed quivers.  Plamondon \cite{Plamondon} and Nagao \cite{Nagao} later gave proofs using cluster categories and Donaldson-Thomas theory, respectively. Sign coherence later arose in the groundbreaking work of Gross-Hacking-Keel-Kontsevich \cite{GHKK} on cluster scattering diagrams, which has greatly informed our understanding of Laurent positivity and canonical bases for cluster algebras.  They gave a geometric proof of sign coherence which holds in the more general skew-symmetrizable setting. Nakanishi and Zelevinsky \cite{NZ} showed that the $c$-vectors, which parametrize the coefficients, are tropically dual to the \emph{$g$-vectors}, which parametrize the cluster variables.  Their proof of this duality, which relies on the sign coherence of $c$-vectors, solidified the connection to the representation theory of finite-dimensional algebras, where $g$-fans play an important role.  Cao, Huang, and Li  \cite{CaoHuangLi} proved that seeds are uniquely determined by their $C$-matrix (whose columns are the $c$-vectors) using both sign coherence and Laurent positivity. 

%In other words, each mutable vertex $i$ is a either a source or a sink with respect to all the frozen vertices. 

Since the sign coherence phenomenon requires the initial quiver to be principally framed and hence sign-coherent, at face value it seems like a statement about a property being preserved under mutation.  Our results show that not only is sign coherence preserved under the right circumstances, but it emerges eventually after almost any sequence of mutations.  This behavior was first identified by Gekhtman and Nakanishi \cite[{cf. \Cref{{conj: asymptotic sign coherence}}}]{GekhtmanNakanishi}, who posed the \emph{asymptotic sign coherence conjecture} saying that all sufficiently generic mutation sequences of quivers (with arbitrary initial $c$-vectors) will eventually (or asymptotically) be sign-coherent. Via a direct computation of $c$-vectors, they confirmed their conjecture in rank $2$ and for a particular mutation sequence on the rank $3$ Markov quiver. Our main result substantially addresses their conjecture for quivers of arbitrary rank in an entirely combinatorial way, and gives new tools to understand the sign coherence phenomenon. 

\begin{thm}[{cf. \Cref{thm: geq 3 eventually sc}}]
\label{thm: intro main}
Let $Q$ be a quiver whose mutable part is mutation-infinite.  For all $i \in \N$, let $m_i$ be chosen uniformly at random from $[n]$.  With probability $1$, the mutation sequence $\mathbf M = m_1m_2\cdots$ on $Q$ is eventually sign-coherent.
\end{thm}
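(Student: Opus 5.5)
The plan is to reduce the probabilistic statement to a deterministic one about a special class of quivers (the \emph{brog} quivers mentioned in the abstract), and then show that a random mutation sequence enters and stays in that class almost surely. I will define the class so that three properties hold. First, it is described purely through the mutable part: every arrow between mutable vertices has multiplicity at least $2$, and the quiver satisfies an ordering or acyclicity-type condition so that mutation at a vertex grows multiplicities in a controlled way. Second, the class is preserved by every mutation that does not undo the previous one, in the sense that multiplicities increase along ``reduced'' sequences. Third, once the mutable part is brog, each $c$-vector is eventually dominated by the large mutable multiplicities. The deterministic heart is then a \emph{domination lemma}. Suppose the mutable part is brog and we mutate at $k$. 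The new $c$-vector $c_j' = c_j + [b_{jk}]_+ c_k + \ldots$ (with the usual sign conventions) has a newly added term that is a large multiple of some $c_k$, while the old contribution stays comparatively small. Iterating, every $c$-vector eventually becomes a positive combination of vectors sharing a common sign pattern, so it becomes sign-coherent and stays so.

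Step 1 carries out the deterministic argument. I will show that along any mutation sequence on a brog quiver with no immediate repetition $m_{i+1}=m_i$, the $c$-vectors become sign-coherent after finitely many steps. The method is to track, for each mutable vertex, the ``last mutated'' direction. One shows by induction that the $c$-vector of the most recently mutated vertex is the negative of a vector $v$, and that all other $c$-vectors are $\pm$ (large multiple of $v$) plus lower-order terms. Here ``large'' is measured by products of arrow multiplicities, which grow at least geometrically, at ratio $\ge 2$. After finitely many steps the lower-order terms (the arbitrary initial frozen data) are swamped, so all entries share a sign. To make this rigorous I will compare against the principally framed case. Because $c$-vectors transform linearly in the initial frame, the $C$-matrix with arbitrary initial frame equals the initial frozen matrix times the principal $C$-matrix. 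Combined with the known sign coherence for principal coefficients \cite{GHKK} and the growth of $C$-matrix columns toward a dominant direction, this yields eventual coherence.

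Step 2 is the probabilistic part, showing that a uniformly random sequence on a mutation-infinite quiver almost surely reaches a brog quiver. Since the mutable part is mutation-infinite, some finite mutation sequence $\mathbf w$ takes the current quiver into the brog class. More precisely, I would use the standard fact that mutation-infinite quivers are mutation-equivalent to quivers with an arrow of multiplicity $\ge 3$ (or rank $2$ with large weight). From such a quiver a bounded-length word produces a brog quiver, and the length bound should depend only on $n$ through a finite case analysis of local configurations. Borel--Cantelli, or the fact that a fixed finite word of length $L$ appears infinitely often in an i.i.d. uniform sequence with probability $1$, then guarantees that $\mathbf w$ appears. The subtlety is that $\mathbf w$ depends on the current quiver. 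I will handle this by showing that there is a bound $L(n)$ such that from any mutation-infinite quiver, at least one word of length $\le L(n)$ reaches the brog class. Each block of $L$ steps then succeeds with probability at least $n^{-L}$, independently, so success happens almost surely.

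Step 3 handles the random tail after entering the brog class. The tail contains repetitions $m_{i+1}=m_i$, which are involutions. I will collapse these, since a random sequence reduces to an infinite reduced sequence almost surely and $c$-vectors depend only on the reduced word. I expect two obstacles. The main one is Step 2's uniform bound: ensuring every mutation-infinite mutable part reaches a brog configuration within a bounded number of steps, in arbitrary rank and possibly with disconnected or finite-type components. I would first try to handle connected components separately, possibly requiring only one infinite component to be brog while finite-type parts are treated via periodicity. The second obstacle is verifying that the brog class is genuinely closed under non-backtracking mutation. I would attempt this by proving multiplicities along a reduced path satisfy $b' \ge 2b$ via the mutation formula $b'_{ij}=b_{ij}+\operatorname{sgn}\,[b_{ik}b_{kj}]_+$.
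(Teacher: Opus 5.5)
Your overall architecture matches the paper's: reach a good class within a uniformly bounded number of mutations, stay there, then apply a deterministic eventual-coherence theorem. But the deterministic heart, Step~1, does not work.

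\textbf{The linearity claim is false.} Your rigorous route rests on $c$-vectors transforming linearly in the initial frame, so that the $C$-matrix for arbitrary frozen data would be that data times the principal $C$-matrix. In the paper's conventions, $\mu_k$ acts for $j\neq k$ by $(c'_j)_u=(c_j)_u+\operatorname{sgn}(b_{jk})\,[\,b_{jk}(c_k)_u\,]_+$. This is only piecewise-linear in the frozen data. Merging or splitting frozen vertices commutes with $\mu_k$ when $c_k$ is sign-coherent, but not in general. Concretely, take $1\xrightarrow{2}2$ with principal frame $1\to u_1$, $2\to u_2$. Mutation at $1$ leaves $(c_2)_{u_1}=0$. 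A frozen $u$ with $u\to 1$ (exactly the negative of $u_1$) acquires two arrows $u\to 2$, so $(c_2)_u=-2$. Hence GHKK does not transfer to mixed frames. Also, ``growth of $C$-matrix columns toward a dominant direction'' is not an available theorem; it is essentially the conjecture itself.

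\textbf{The domination heuristic fails for the same reason.} When $c_k$ has mixed signs, only its positive or only its negative part is added to $c_j$, which is precisely the case you must control. The picture of all $c$-vectors being $\pm$ a large multiple of a single $v$ has no justification in rank $\geq 3$. A class defined only through $Q^{\mut}$ cannot carry the argument, because sign coherence is decided by the triangles $\{i,j,u\}$ through frozen vertices. Step~1 as stated also omits the requirement that every vertex actually be mutated.

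\textbf{The missing idea is the paper's combinatorial mechanism,} which uses no growth estimates and no GHKK. One works with ice forks, where $Q|_{[n]\cup u}$ is a fork with a common point of return for every frozen $u$. By the Tree Lemma, every reduced sequence not starting at the point of return is then cycle-preserving, including on triangles through frozen vertices. Orientation arguments (\Cref{lem: skip colored}, \Cref{lem: colors stay}, \Cref{lem:mutation makes more color}) show two things: red and green vertices stay red or green, and a non-coherent $j$ becomes red or green as soon as $\mu_j$ is followed by any other mutation. This is \Cref{thm:Asym Sign Coherence}.

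Consequently your Step~2 bound must involve the frozen vertices, not just $Q^{\mut}$. The paper combines Warkentin's $O(n^3)$ bound to a fork (\Cref{lem: fork in finite mutations}) with his Corollary~3.10, adding frozen vertices one at a time (\Cref{prop: add frozen to ice fork}, \Cref{lem: ice fork in finite mutations}), to get a bound $D(m,n)$.

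Your block argument only shows that the walk hits such a quiver. Staying in one tree below it uses transience of the walk on the $(n-1)$-ary tree. This is available because a mutation-infinite mutable part forces $n\geq 3$: rank-$2$ mutable parts are mutation-finite, so your rank-$2$ worry is moot.

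Finally, disconnected mutable parts cannot be handled via periodicity. An isolated mutable vertex $w$ with $u\to w\to v$ is untouched by mutations elsewhere, and $\mu_w$ only reverses its arrows, so $w$ is never sign-coherent. Connectedness and the presence of a frozen vertex have to be assumed, as the paper implicitly does through \Cref{lem: ice fork in finite mutations}.
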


 We prove this in two steps. We first identify a class of quivers called \emph{ice forks} (based on the \emph{forks} introduced by Warkentin \cite{Warkentin}), and show that ice forks are only a short mutation sequence away from being sign-coherent. Sign coherence is also preserved for ice forks under any reduced sequence of mutations, so after the initial few mutations, the quiver will remain sign-coherent. 
The second step involves showing that the above situation occurs with probability $1$ in a random (balanced, monotone) sequence of mutations.   These same methods yield a combinatorial proof that for a principally-framed quiver with mutation-infinite mutable part, almost all of the quivers in its mutation graph are sign-coherent (\Cref{cor:principal framing result}).  Our approach is similar to the methods of Warkentin \cite[Chapter 5] {Warkentin} for quivers, but adapted to accommodate frozen vertices.  We focus specifically on the setting where the mutable part is \emph{mutation-infinite}, meaning that it is mutation-equivalent to infinitely many distinct quivers. This includes almost all quivers, with the mutation-finite quivers being classified in \cite{FST}. 

We additionally prove that Gekhtman--Nakanishi's asymptotic sign coherence holds in full for \emph{mutation-abundant} rank $3$ quivers (\Cref{thm: mutation cyclic eventually sign coherent}).  Mutation-abundant quivers include all mutation classes of acyclic quivers with at least $2$ arrows between each pair of vertices, and all classes which do not contain an acyclic quiver. Our methods involve introducing and studying \emph{brog} quivers, which have vertices colored with $4$ colors respecting some prescribed arrow orientations. Brog quivers generalize several properties of ice forks, see in particular \Cref{rem: ice forks and coloring}.
We show that any sufficiently nice mutation results in a brog quiver (\Cref{thm: brog}) and that brog quivers are eventually sign-coherent on \emph{cycle-preserving} mutation sequences that mutate at each mutable vertex (\Cref{thm: brog coherence}).  Our combinatorial methods also recover Gekhtman and Nakanishi's results for rank $2$ quivers and the Markov quiver.

\textbf{Additional related work.}
Of the many works on sign coherence, ours is among a short list that addresses the phenomenon primarily via combinatorial methods.  Another work that takes this combinatorial approach is Ervin \cite{ErvinRS}, showing that if a fork quiver (Definition~\ref{def:fork}) is sign-coherent and satisfies some mild conditions, then any mutation sequence starting at this quiver will only visit sign-coherent quivers. 
Grant \cite{Grant2026} recently built on this result to show that certain $c$-vectors stabilize after sufficiently many mutations.
Some other related works have combinatorial aspects but primarily use algebraic or geometric methods. Machacek and Ovenhouse \cite{MO} give an alternate proof of asymptotic sign coherence for rank two cluster algebras in the context of real-valued mutation dynamics.  The sign coherence phenomenon has also been extended partially to the setting of exchange matrices with real weights in the work of Akagi and Chen \cite{AC}. Ishibashi and Kano have several results related to asymptotic sign coherence when the mutation sequence consists of repeated traversals of a mutation cycle (which they call a mutation loop), see, e.g.,\;\cite{IshibashiKano2021,IshibashiKano2024}. % In particular, Corollary 5.7  
Reading and Speyer \cite{ReadingSpeyer} developed a general approach to studying sign coherence (and other fundamental properties) in finite and affine type using frameworks, arising from Cambrian combinatorics and sortable elements.

One fruitful direction in the study of quivers with a principal framing are \emph{reddening} sequences (sequences of mutations that produce a quiver with only red vertices) and \emph{maximal green} sequences (sequences of mutations that mutate only at green vertices).
Such sequences have many applications and structural results.
Reddening sequences are mutation cycles on the mutable part \cite{BDP}, and can be used to construct long mutation cycles \cite{MCR}. The existence of a reddening sequence is a mutation invariant \cite{Muller_2016}.
Maximal green sequences give a way to compute Donaldson-Thomas invariants and the twist automorphism of the cluster algebra \cite{KellerGreenSurvey}, and furthermore imply that the full Fock-Goncharov dual basis conjecture holds for the associated cluster variety \cite{GHKK}.
Associated statistics were studied by Ervin in \cite{ErvinRS}.% Because any quiver mutation equivalent to a quiver with all red (resp. green) vertices is sign-coherent, we do not consider these sequences directly. 

\textbf{Structure of the paper.} 
In \Cref{sec: prelim} we establish notation and review relevant results. 
%nobody can skip it; the notation is new.
In \Cref{sec:fork eventual coherence} we build a framework for studying ice forks, and we show in \Cref{thm:Asym Sign Coherence} that ice forks are eventually sign-coherent on many mutation sequences.
We prove our main result (\Cref{thm: geq 3 eventually sc}) in \Cref{sec:wander} and additionally establish related results on the prevalence of ice forks (\Cref{prop: ice forks are all the class} and \Cref{prop: ice forks get wandered into}).
In \Cref{sec:brog} we introduce brog quivers, which generalize several properties of ice forks, and prove that these quickly become eventually sign-coherent on most mutation sequences. In \Cref{sec:low rank} we prove that the asymptotic sign coherence conjecture holds for all mutation-abundant rank $3$ quivers and provide additional examples of quivers which are sign-coherent in similar generality.  Finally, in \Cref{sec:asym vs eventual} we state a version of the conjecture of Gekhtman and Nakanishi (\Cref{conj: asymptotic sign coherence}) and show it is equivalent to a reformulation we call the \emph{eventual sign coherence} conjecture (\Cref{conj: eventual sign coherence}), which more closely aligns with our results. %We conclude by discussing potential further directions.

\section*{Acknowledgments}

We thank Niven Achenjang, Tucker Ervin, Sergey Fomin, Misha Gekhtman, Benjamin Grant, Joel Kamnitzer, Ian Le, Tomoki Nakanishi, Nathan Reading, David Speyer, and Hugh Thomas for helpful conversations. 
Both authors are grateful to LACIM (at Universit\'e du Qu\'ebec \`a Montr\'eal) for providing an excellent and supportive atmosphere to finish this project. We additionally thank Thomas Br\"ustle, Amit Ghosh, and Peter Sarnak for correspondence on a much earlier iteration of this project.

\section{Preliminaries}
\label{sec: prelim}

We begin by recalling some background on quiver mutation, types of mutation sequences, and sign coherence.  We also review some of Warkentin's results on forks  \cite{Warkentin}.

\subsection{Quivers and mutation}

Cluster algebras have become a major topic of study in combinatorics, algebra, representation theory, algebraic geometry, mathematical physics, and more (see \cite{FWZ} for more information). Quivers and quiver mutation play a central role in the construction of cluster algebras.  Cluster algebras with coefficients, in addition to the usual cluster algebra structure, include a coefficient semifield generated by frozen (or coefficient) variables.  Combinatorially, this additional data corresponds to the inclusion of new ``frozen'' vertices in the quiver, which we are forbidden from mutating.  In this paper we look at quiver mutation from a purely combinatorial perspective and do not consider the associated cluster variables.

\begin{defn}
A \emph{quiver} is a finite directed graph with no oriented $1$- or $2$-cycles (though parallel arrows are allowed).
The edges of a quiver are called \emph{arrows}.
The \emph{signed adjacency matrix} $B_Q = (b_{ij})$ of a quiver $Q$ records the number of arrows between each pair of vertices, with $b_{ij} = -b_{ji} >0$ when there is an arrow oriented $i \rightarrow j$ in $Q$. When $b_{ij} \geq 0$, we refer to $b_{ij}$ as the \emph{weight} of the arrow from $i$ to $j$, sometimes represented pictorially by a single arrow with label $b_{ij}$.

The vertices of a quiver are partitioned into sets of \emph{frozen} and \emph{mutable} vertices. (We only mutate at mutable vertices, see Definition~\ref{def:mutation}.) The subquiver of mutable vertices is denoted $Q^{\mut}$. 
The \emph{rank} of a quiver is the number of mutable vertices, which we often denote by $n$.

A \emph{subquiver} of $Q$ is a vertex-induced subgraph, in other words, the quiver obtained by deleting a subset of the vertices of $Q$ and all arrows involving any deleted vertex. We write $Q|_S$ for the subquiver of $Q$ with vertex set $S$.
\end{defn}

Following convention, we depict frozen vertices as squares and mutable vertices as circles.  For the purposes of this paper, we ignore the data of arrows between frozen vertices.

It will often be useful to describe the orientation of the arrows of a quiver, without specifying the weights.  If we draw a directed graph without any labels on the arrows, then the weights may be arbitrary positive integers.
In particular, we will draw $1 \stackrel{1}{\rightarrow} 2$ when we have a single arrow between vertices $1$ and $2$. 
We will use a dashed line to denote the possible presence of an arrow with arbitrary integer weight and orientation. 

\begin{rem}
We will generally treat quivers as labeled graphs, so that two quivers are equal only when they have the same vertex labels. 
\end{rem}

\pagebreak[2]

\begin{defn}
\label{def:mutation}
To \emph{mutate} a quiver $Q$ at a mutable vertex $j$, perform the following steps:
\begin{itemize}
\item for each path $i \stackrel{a}{\rightarrow} j \stackrel{b}{\rightarrow} k$, add $ab$ new arrows $i \stackrel{ab}{\rightarrow} k$ (i.e., add $ab$ to $b_{ik}$); 
\item reverse each arrow incident to $j$;
\item remove oriented $2$-cycles, one-by-one.
\end{itemize}
We call the resulting quiver $\mu_j(Q)$.
Note that each mutation $\mu_j$ is an involution, and that mutation commutes with a global reversal of arrows.
\end{defn}

\begin{definition}
\label{def:visit seq}
A \emph{mutation sequence} $\mathbf M = m_1 m_2 \cdots$ of a quiver $Q$ is a (possibly infinite) sequence of mutable vertices in $Q$.
We inductively define the sequence of quivers: 
\[Q_{\mathbf M}^{(0)} = Q;\]
\[Q_{\mathbf M}^{(i)} = \mu_{m_i}(Q_{\mathbf M}^{(i-1)}).\]

If $\mathbf M$ is finite we have additional notation.   We use $|\mathbf M|$ for the number of mutations in the mutation sequence, and let $Q_{\mathbf M} = Q_{\mathbf M}^{(|\mathbf M|)}$.  We denote by $\overline{\mathbf{M}}$ the infinite periodic sequence which repeats $\mathbf M$ over and over. 
\end{definition}

Two quivers are \emph{mutation-equivalent} if one can be obtained from the other via a finite sequence of mutations, and their \emph{mutation distance} is the minimum length of such a mutation sequence. The \emph{mutation class} of a quiver is the set of all mutation-equivalent quivers.  

\subsection{Sign Coherence}

\begin{definition}\label{defn: red green}
Let $i$ be a mutable vertex adjacent to at least one frozen vertex in a quiver $Q$. 
We say $i$ is \emph{red} (resp. \emph{green}) if all arrows between $i$ and the frozen vertices are oriented towards (resp. away from) $i$.  
If $i$ is red or green, we say that the vertex $i$ is \emph{sign-coherent}.  
A quiver is called \emph{sign-coherent} if all its mutable vertices are sign-coherent. 
\end{definition}

\begin{figure}
\centering
\begin{minipage}{0.45\textwidth}
\begin{tikzpicture}[->, >={Stealth[round]}, node distance=3cm and 4cm, main/.style = {circle, draw, fill=none, minimum size=6mm, inner sep=0pt}, every edge/.append style={thick}, square/.style={regular polygon,regular polygon sides=4}]

\filldraw[black] (0,0)++(180:1.7cm) node[main, fill = white] (1) {\textcolor{black}{$\boldsymbol{1}$}};
\filldraw[black] (0,0)++(60:2cm) node[main, fill = white] (2) {\textcolor{black}{$\boldsymbol{2}$}};
\filldraw[black] (0,0)++(-60:2cm) node[main, fill = white] (3) {\textcolor{black}{$\boldsymbol{3}$}};
\filldraw[black] (0,0) node[main, fill = white, square] (u) {\textcolor{black}{$\boldsymbol{u}$}};
\filldraw[black] (0,0)++(180:4cm) node[main, fill = white, square] (v) {\textcolor{black}{$\boldsymbol{v}$}};

\path
 (1) edge["$10$", outer sep=-3, pos=0.2] (2)
 (2) edge["$3$", outer sep=-2, pos=0.5] (3)
 (3) edge["$2$"', outer sep=-2, pos=0.5] (1)
 (v) edge["$2$"', outer sep=-2, pos=0.5] (3)
 (1) edge["$7$", outer sep=-2, pos=0.2] (v)
 (3) edge["$2$"', outer sep=-2, pos=0.6] (u)
 (u) edge["$9$", outer sep=-2, pos=0.3] (2)
 (1) edge["$2$", outer sep=-2, pos=0.7] (u)
 (v) edge["$2$", outer sep=-2, pos=0.5] (2);
\end{tikzpicture}
\end{minipage}
\begin{minipage}{0.45\textwidth}
\begin{tikzpicture}[->, >={Stealth[round]}, node distance=3cm and 4cm, main/.style = {circle, draw, fill=none, minimum size=6mm, inner sep=0pt}, every edge/.append style={thick}, square/.style={regular polygon,regular polygon sides=4}]

\filldraw[black] (0,0)++(180:1.7cm) node[main, fill = white] (1) {\textcolor{black}{$\boldsymbol{1}$}};
\filldraw[black] (0,0)++(60:2cm) node[main, fill = white] (2) {\textcolor{black}{$\boldsymbol{2}$}};
\filldraw[black] (0,0)++(-60:2cm) node[main, fill = white] (3) {\textcolor{black}{$\boldsymbol{3}$}};
\filldraw[black] (0,0) node[main, fill = white, square] (u) {\textcolor{black}{$\boldsymbol{u}$}};
\filldraw[black] (0,0)++(180:4cm) node[main, fill = white, square] (v) {\textcolor{black}{$\boldsymbol{v}$}};

\path
 (1) edge["$4$", outer sep=-3, pos=0.2] (2)
 (3) edge["$3$", outer sep=-2, pos=0.5] (2)
 (1) edge["$2$"', outer sep=-3, pos=0.3] (3)
 (3) edge["$2$"', outer sep=-13, pos=0.5] (v)
 (1) edge["$3$", outer sep=-2, pos=0.2] (v)
 (u) edge["$2$"', outer sep=-2, pos=0.3] (3)
 (u) edge["$3$", outer sep=-2, pos=0.3] (2)
 (1) edge["$2$", outer sep=-2, pos=0.7] (u)
 (v) edge["$2$", outer sep=-2, pos=0.5] (2);
\end{tikzpicture}
\end{minipage}
\caption{On the left is a rank $3$ quiver with two frozen vertices, $u$ and $v$.  The quiver on the right is obtained by mutation at vertex $3$.}
\label{fig: fork sign coherent}
\end{figure}

\begin{exmp}
Let $Q$ be the quiver on the left in Figure~\ref{fig: fork sign coherent}.
Then vertex $1$ is green, vertex $2$ is red, and vertex $3$ is neither red nor green.
The signed adjacency matrix of the mutable part $Q^{\mut}$ is 
$$B_Q = \begin{pmatrix} 0 & 10 & -2 \\ -10 & 0 & 3 \\ 2 & -3 & 0 \end{pmatrix}.$$
\end{exmp}

\begin{rem}
Discussions of sign coherence sometimes refer to $c$-vectors rather than the frozen vertices in a quiver. 
The $c$-vector corresponding to a mutable vertex $i$ records the (signed) number of arrows from $i$ to $u$ for each frozen vertex $u$.  Equivalently, a quiver is sign-coherent if and only if each $c$-vector is nonzero and has all nonnegative or all nonpositive entries. 
For example, the $c$-vector of vertex $1$ in Figure~\ref{fig: fork sign coherent} is $c_1 = ((c_1)_u,(c_1)_v) = (2,7)$. 
\end{rem}

Red and green vertices are important in a variety of applications of quiver combinatorics \cite{KellerGreenSurvey}.  A fundamental property of cluster algebras is the \emph{sign coherence phenomenon} for quivers \cite{DWZ2}, which states that if all vertices in a quiver are red (or, by global reversal of arrows, if all are green), then every mutation equivalent quiver will be sign-coherent.  Note that this is equivalent to the principal framing formulation of sign coherence (as stated in the introduction) via copying and combining frozen vertices.

\subsection{Forks}
The mutation graph $\Gamma$ of a mutation class is the undirected graph with a vertex for each quiver and an edge $Q \stackrel{i}{\text{---}} Q'$ whenever $Q = \mu_i(Q')$.
Most of the mutation graph of a mutation-infinite quiver is covered by infinite rooted $(n-1)$-ary trees composed of well understood quivers, called forks (Definition~\ref{def:fork} below).

\begin{defn}
A quiver is \emph{connected} if its mutable part is connected as an undirected graph, and there are no isolated frozen vertices.
\end{defn}

\begin{defn}
\label{def:acyclic}
A quiver is \emph{acyclic} if it is acyclic as a directed graph. 
A quiver is \emph{mutation-acyclic} if it is mutation equivalent to an acyclic quiver.
\end{defn}

\begin{defn}
\label{def:elbow-oriented}
In the special case of $3$-vertex quivers, we have additional terminology. 

Suppose that the underlying directed graph of a $3$-vertex quiver $Q$ is acyclic. Then there is at most one vertex $j$ such that $j$ is in the middle of an oriented path $i \rightarrow j \rightarrow k$. If at least one of $i$ or $k$ is mutable, then we call $j$ an \emph{elbow} in $Q$.

A $3$-vertex quiver is an \emph{oriented cycle} if its underlying directed graph has at most one frozen vertex and is not acyclic; we will only use this terminology for $3$-vertex quivers.
\end{defn}

\begin{figure}
\centering
\begin{tikzpicture}[->, >={Stealth[round]}, node distance=3cm and 4cm, main/.style = {circle, draw, fill=none, minimum size=6mm, inner sep=0pt}, every edge/.append style={thick}, square/.style={regular polygon,regular polygon sides=4}]

\filldraw[black] (0,0)++(45:1cm) node[main, fill = white] (1) {\textcolor{black}{$\boldsymbol{1}$}};
\filldraw[black] (0,0)++(-45:1cm) node[main, fill = white] (2) {\textcolor{black}{$\boldsymbol{2}$}};
\filldraw[black] (0,0)++(-135:1cm) node[main, fill = white] (3) {\textcolor{black}{$\boldsymbol{3}$}};

\path
 (1) edge["4"] (2)
 (2) edge["1"] (3);

\begin{scope}[shift = {(5,0)}]
\filldraw[black] (0,0) node[main, fill = white] (a) {\textcolor{black}{$\boldsymbol{4}$}};
\filldraw[black] (0,0)++(90:3cm) node[main, fill = white] (1) {\textcolor{black}{$\boldsymbol{1}$}};
\filldraw[black] (0,0)++(-30:3cm) node[main, fill = white] (2) {\textcolor{black}{$\boldsymbol{2}$}};
\filldraw[black] (0,0)++(210:3cm) node[main, fill = white] (3) {\textcolor{black}{$\boldsymbol{3}$}};

\path
 (1) edge["2"'] (3)
 (2) edge["9"'] (1)
 (3) edge["4"'] (2)
 (a) edge["12", pos=0.3] (1)
 (2) edge["13"', pos=0.7] (a)
 (a) edge["7", pos=0.3] (3);
\end{scope}

\begin{scope}[shift = {(10,0)}]
\filldraw[black] (0,0)++(90:1cm) node[main, fill = white] (1) {\textcolor{black}{$\boldsymbol{1}$}};
\filldraw[black] (0,0)++(-30:1cm) node[main, fill = white] (2) {\textcolor{black}{$\boldsymbol{2}$}};
\filldraw[black] (0,0)++(210:1cm) node[main, fill = white] (3) {\textcolor{black}{$\boldsymbol{3}$}};

\path
 (1) edge["2"'] (3)
 (2) edge["2"'] (1)
 (3) edge["2"'] (2);
\end{scope}
\end{tikzpicture}

\caption{Three connected quivers with no frozen vertices. From left to right, an acyclic quiver which has elbow $2$ and is not complete; an abundant quiver which is a fork with point of return $3$; and the Markov quiver (which is an oriented cycle).}
\label{fig:quiver examples}
\end{figure}

\begin{defn} 
\label{def:complete-abundant}
A quiver is \emph{complete} (resp. \emph{abundant}) if it has at least $1$ arrow (resp. at least $2$ arrows) between every pair of vertices where at least one of the two is mutable.
\end{defn}

Given a quiver $Q$ with a vertex $v$, let $Q^-(v)$ be the set of vertices with arrows pointing toward $v$ and let $Q^+(v)$ be the set of vertices with arrows coming from $v$.

\begin{defn}[{\cite[Definition 2.1]{Warkentin}}]
\label{def:fork}
A \emph{fork} is an abundant, non-acyclic quiver $F$ with at most one frozen vertex and a distinguished vertex $r$ (called the \emph{point of return}) such that 
\begin{itemize}
\item For all $i \in F^+(r)$ and $j \in F^-(r)$, we have $b_{ij} > b_{ri}$ and $b_{ij} > b_{jr}$.
\item The full subquivers induced by $F^+(r)$ and $F^-(r)$ are acyclic.
\end{itemize}
\end{defn}

Note that the point of return is uniquely determined.  

\begin{exmp}
\label{eg: it is an ice fork}
Let $Q$ be the quiver on the left in Figure~\ref{fig: fork sign coherent}.
Then $Q$ is connected, complete, and abundant. The mutable part $Q^{\mut}$ is an oriented cycle.
Although $Q$ is not a fork (as it has two frozen vertices), both of the subquivers $Q|_{123u}$ and $Q|_{123v}$ are forks with point of return $3$.
The quiver $\mu_3(Q)^{\mut}$ is acyclic and vertex $3$ is its elbow.
\end{exmp}

\begin{rem}
Forks were initially defined for quivers with only mutable vertices.
When a quiver has two frozen vertices, it is not always clear how to interpret the inequalities in the definition.
\end{rem}

%\begin{exmp}
%Suppose that $Q$ is a rank 3 abundant acyclic quiver with elbow $j$ (Definitions~\ref{def:elbow-oriented} and~\ref{def:complete-abundant}).
%Then $\mu_j(Q)$ is a fork with point of return $j$.
%\end{exmp}

Crucially, forks are ``forks'' in the mutation graph in the following sense.

\begin{lem}[Tree Lemma, {\cite[Lemma 2.8]{Warkentin}}]
\label{lem:tree lemma}
Let $\Gamma$ be the mutation graph of a rank $n$ quiver and assume that one vertex of $\Gamma$ is a fork $Q$ with point of return $r$.  When we delete the edge $e$ between $Q$ and $\mu_r(Q)$, the connected component $\Gamma'$ of $\Gamma - e$ containing $Q$
 is an infinite complete $(n-1)$-ary tree rooted at $Q$. \end{lem}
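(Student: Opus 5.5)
The plan is to establish the key local fact by direct computation: if $F$ is a fork with point of return $r$, then for every mutable vertex $k\neq r$, the quiver $\mu_k(F)$ is again a fork, with point of return $k$. Iterating this gives the tree structure. Along any path starting at $Q$ that does not use the edge $e$, the mutations form a reduced word in which each step differs from the previous one. Each quiver reached is a fork whose point of return is the vertex just mutated. Hence from each non-root node there are exactly $n-1$ "forward" edges (mutations at $k\neq$ current point of return), and from the root $Q$ there are also $n-1$ (all $k\neq r$).

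\textbf{Verifying the local fact.} Write $F^+=F^+(r)$ and $F^-=F^-(r)$, and fix $k\neq r$. By reversing all arrows we may assume $k\in F^+$, so $r\to k$. After mutating at $k$, the new point of return is $k$. We compute $b'_{ij}=b_{ij}+\sign(b_{ik})\max(b_{ik}b_{kj},0)$-type updates using the fork inequalities $b_{ij}>b_{ri},b_{jr}$ for $i\in F^+$, $j\in F^-$. Abundance ($\geq 2$ arrows) makes products dominate sums. For example, for $j\in F^-$ we have $k\to j$ with $b_{kj}>b_{rk}\ge 2$, so $b'_{rj}=b_{rk}b_{kj}-b_{jr}$. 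Since $b_{kj}>b_{jr}$, this is positive and at least $b_{kj}$-sized, so $r\to j$ now and the arrows are strictly larger. Next we check that $(\mu_kF)^+(k)$ and $(\mu_kF)^-(k)$ are acyclic, and that the new cross inequalities $b'_{ij}>b'_{ki},b'_{jk}$ hold. This is done by a case split on which of $F^+\setminus\{k\}$ and $F^-$ the vertices $i,j$ lie in, using acyclicity of $F^\pm$ to order vertices and the inequalities to control signs. Non-acyclicity of the new quiver follows since $r\to j\to k\to r$ forms a cycle.

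\textbf{Injectivity.} Knowing every node of the component is a fork with a well-defined point of return, I would show that distinct reduced words from $Q$ (not starting with $r$) give distinct quivers, so that no cycles occur in $\Gamma'$. For this I would attach a potential, e.g. the total arrow weight, or the maximal weight $b_{ij}$ among arrows not incident to the point of return. The computation above should show this strictly increases with each forward mutation. Backward mutation, at the point of return, is the unique mutation decreasing it. Therefore each node has a unique parent, namely $\mu$ at its point of return, and following parents strictly decreases the potential back to $Q$. This gives a unique path to the root, so the component is a tree. Since the potential is unbounded along forward paths, the quivers never repeat, and the tree is infinite and complete $(n-1)$-ary. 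The edge to $\mu_r(Q)$ is excluded because it is the parent edge of $Q$.

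\textbf{Main obstacle.} I expect the main difficulty to be the case analysis establishing that the fork conditions, especially acyclicity of the new $F^\pm(k)$ and the strict inequalities, are preserved for all pairs $i,j$. This includes pairs inside $F^+$ where $k$ sits in the acyclic order. The strict monotonicity of the potential must also be confirmed alongside it. I would first handle rank $3$, where it is a clean computation, then organize the general case by the positions of $i$ and $j$ relative to $k$.
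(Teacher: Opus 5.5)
Your proposal is correct and is essentially the standard argument behind this result, which the paper cites from Warkentin rather than reproving. The ingredients match that argument: mutating a fork at $k\neq r$ yields a fork with point of return $k$ and strictly larger total weight, and uniqueness of the point of return together with this monotone potential gives every non-root node a unique parent, hence the complete $(n-1)$-ary tree.

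The case analysis you expect to be the main obstacle is actually short. With $k\in F^+(r)$, one has $(\mu_kF)^+(k)=\{r\}\cup\{a\in F^+(r): a\to k\}$ and $(\mu_kF)^-(k)=F^-(r)\cup\{a\in F^+(r): k\to a\}$, and both are visibly acyclic. Apart from the reversed arrows at $k$, the only entries that change are $b_{xy}+b_{xk}b_{ky}$ for $x\to k\to y$; when $x=r$ and $y=j\in F^-(r)$ this equals $b_{rk}b_{kj}-b_{jr}$. Each such entry exceeds $b_{xk}$ and $b_{ky}$ by abundance and the fork inequalities.
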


\subsection{Mutation Sequences}
There are many natural restrictions one can put on a mutation sequence, both in terms of local and global behavior.  We will explore when eventual sign coherence holds in the presence or absence of these restrictions. 

\begin{defn}
A (potentially infinite) mutation sequence $\mathbf M$ is called 
\begin{itemize}
    \item \emph{reduced} if no vertex is mutated at twice in a row, $m_i \neq m_{i+1}$;
    \item \emph{simple} if every quiver it visits is distinct, $Q^{(i)}_{\mathbf M} \neq Q^{(j)}_{\mathbf M}$ for $i \neq j$; 
    \item \emph{monotone} if every mutation in the sequence increases the minimum number of mutations required to return $Q^{(i)}_{\mathbf M}$ to the initial quiver $Q$.
\end{itemize}
\end{defn}

Note that a monotone mutation sequence is necessarily simple, and a simple mutation sequence is necessarily reduced.

\begin{rem}
A mutation sequence $\mathbf M$ at $Q$ identifies a walk in the mutation graph:
$$ Q \stackrel{m_1}{\text{---}} Q^{(1)}_{\mathbf M} \stackrel{m_2}{\text{---}} \cdots$$
If $\mathbf M$ is simple, then this walk is a path. 
If $\mathbf M$ is reduced, then we never traverse the same edge twice in a row. 
If $\mathbf M$ is monotone, then the walk is a `geodesic' ray.
\end{rem}

\begin{rem}
Given a quiver $Q' \neq Q$ inside of $\Gamma - e$, the shortest (and only reduced) mutation sequence that reaches $Q$ begins by mutating at the point of return of $Q'$. 
Thus it is easy to mutate out of these $(n-1)$-ary trees of forks: 
repeatedly find and mutate at the point of return.
\end{rem}

\begin{exmp}
\label{eg: dead end}
Not all finite monotone (resp. simple) paths can be extended to an infinite monotone (resp. simple) path. Consider the (mutation infinite) quiver $Q$ shown in \Cref{fig: not all simple}.
Every mutation of $\mu_2(Q)$ produces a quiver which is at most one mutation from $Q$. Thus the monotone sequence $2$ cannot be extended to an infinite monotone sequence.
Similarly, if we start at $Q'=\mu_1(Q)$, then the simple mutation sequence $\mathbf{M} = 131$ cannot be extended to an infinite simple mutation sequence.
%These issues persist if one treats quivers up to isomorphism (instead of distinguishing quivers by their labels).
\end{exmp}

For infinite mutation sequences, we can also consider how often each vertex is mutated at.

\begin{defn}
An infinite mutation sequence $m_1m_2\cdots$ is called 
\begin{itemize}
    \item \emph{weakly balanced} if every mutable vertex is mutated at infinitely many times.
    \item \emph{balanced} if for every mutable vertex $k$, we have
    $$\liminf_{\ell \to \infty} \frac{1}{\ell}\left|\{j \in [1,\ell] : m_j = k\}\right| > 0\,.$$
\end{itemize}
\end{defn}

\begin{exmp}
    Let $Q$ be a rank $3$ quiver with mutable vertices $1,2,3$.  The mutation sequence $\overline{12}$ is not weakly balanced, the mutation sequence $123\overline{12}$ is weakly balanced but not balanced, and the mutation sequence $\overline{1213}$ is balanced.
\end{exmp}

\begin{figure}
\centering
\begin{tikzpicture}[->, >={Stealth[round]}, node distance=3cm and 4cm, main/.style = {circle, draw, fill=none, minimum size=6mm, inner sep=0pt}, every edge/.append style={thick}, square/.style={regular polygon,regular polygon sides=4}]

\filldraw[black] (-2,0) node[main, fill = white] (1) {\textcolor{black}{$\boldsymbol{1}$}};
\filldraw[black] (0,0) node[main, fill = white] (2) {\textcolor{black}{$\boldsymbol{2}$}};
\filldraw[black] (2,0) node[main, fill = white] (3) {\textcolor{black}{$\boldsymbol{3}$}};
\path
 (1) edge["$2$"] (2)
 (3) edge["$4$"'] (2);

\begin{scope}[shift = {(6,0)}]
\filldraw[black] (0,0) circle (4pt) coordinate (q) node[above left=-1pt] {Q};
\filldraw[black] (1,1) circle (4pt) coordinate (1);
\filldraw[black] (1,-1) circle (4pt) coordinate (2);
\filldraw[black] (2,0) circle (4pt) coordinate (3);
\coordinate (5) at (-1,0);
\coordinate (6) at (3,0);

\path
 (1) edge[-, "3"] (3)
 (2) edge[-, "2"] (1)
 (3) edge[-, "1"] (2)
 (q) edge[-, "1"] (1)
 (q) edge[-, "3"'] (2)
 (q) edge[-, "2"] (5)
 (3) edge[-, "2"] (6);
\end{scope}
\end{tikzpicture}
\caption{On the left, a $3$-vertex acyclic quiver $Q$. On the right, a segment of its mutation graph.}
\label{fig: not all simple}
\end{figure}

We are primarily interested in sign coherence phenomena on infinite mutation sequences.  We say that the mutation sequence $\mathbf M$ on $Q$ is \emph{eventually sign-coherent} if every quiver $Q_{\mathbf M}^{(j)}$ is sign-coherent for sufficiently large $j$.

%Forks can be generalized, without changing the properties too much. 

%\begin{defn}[{\cite[Definition 3.7]{Warkentin}}]
%A \emph{pre-fork} is a quiver $P$ with two distinct vertices $k,k'$ such that
%\begin{itemize}
%    \item Both full subquivers $P \setminus \{k\}$ and $P \setminus \{k'\}$ are forks with a common point of return.
%    \item Any vertex $i \neq k,k'$ is either in $P^+(k) \cup P^+(k')$ or in $P^-(k) \cup P^-(k')$.
%\end{itemize}
%\end{defn}

%An analog of the Tree Lemma holds for pre-forks.

\subsection{Vortices}
Vortices are a type of (sub)quiver that can behave poorly with respect to certain mutation properties.  We require this notion for our work on brog quivers (\Cref{sec:brog}), where we avoid mutations at any vertex that is the apex of a vortex subquiver.  

\begin{defn}[{\cite[Definition~6.2]{LMC}}]
\label{def:vortex}
A \emph{vortex} is a quiver on four vertices, at least three of which are mutable, such that one vertex is either a source or a sink and the other three vertices support an oriented cycle. 
We call the unique source/sink the \emph{apex}.
A quiver is \emph{vortex-free} if no $4$-vertex subquiver is a vortex. 
\end{defn}

\begin{figure}[ht]
\centering
\begin{tikzpicture}[->, >={Stealth[round]}, node distance=3cm and 4cm, main/.style = {circle, draw, fill=none, minimum size=6mm, inner sep=0pt}, every edge/.append style={thick}, square/.style={regular polygon,regular polygon sides=4}]

\filldraw[black] (0,0) node[main, fill = white] (a) {\textcolor{black}{$\boldsymbol{4}$}};
\filldraw[black] (0,0)++(90:1cm) node[main, fill = white] (1) {\textcolor{black}{$\boldsymbol{1}$}};
\filldraw[black] (0,0)++(-30:1cm) node[main, fill = white] (2) {\textcolor{black}{$\boldsymbol{2}$}};
\filldraw[black] (0,0)++(210:1cm) node[main, fill = white, square] (3) {\textcolor{black}{$\boldsymbol{3}$}};

\path
 (1) edge (2)
 (2) edge (3)
 (3) edge (1)
 (1) edge (a)
 (2) edge (a)
 (3) edge (a);

\begin{scope}[shift = {(4,0)}]
\filldraw[black] (0,0) node[main, fill = white] (a) {\textcolor{black}{$\boldsymbol{4}$}};
\filldraw[black] (0,0)++(90:1cm) node[main, fill = white] (1) {\textcolor{black}{$\boldsymbol{1}$}};
\filldraw[black] (0,0)++(-30:1cm) node[main, fill = white] (2) {\textcolor{black}{$\boldsymbol{2}$}};
\filldraw[black] (0,0)++(210:1cm) node[main, fill = white] (3) {\textcolor{black}{$\boldsymbol{3}$}};

\path
 (1) edge (3)
 (2) edge (1)
 (3) edge (2)
 (a) edge (1)
 (a) edge (2)
 (a) edge (3);
\end{scope}
\end{tikzpicture}
\caption{Possible orientations of two vortices with apex $4$.} 
\label{fig: vortex eg}
\end{figure}

The following can be derived via short computations from the definitions.

\begin{prop}
\label{prop:apex preserves}
Suppose $j$ is the apex of a vortex in $Q$.
Then $j$ is still the apex of a vortex in $\mu_j(Q)$.
\end{prop}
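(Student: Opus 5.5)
Since $j$ is the apex of a vortex in $Q$, there are vertices $a,b,c$ with $Q|_{\{a,b,c,j\}}$ a vortex. We may assume $j$ is a source in it, since mutation commutes with global reversal of arrows; reversing all arrows turns a sink apex into a source apex. The plan is to show that the same four vertices $\{a,b,c,j\}$ still form a vortex in $\mu_j(Q)$, with apex $j$. The key point is that mutation at $j$ only changes arrows incident to $j$ and arrows between pairs $i,k$ with a path $i\to j\to k$. Hence the restriction of $\mu_j(Q)$ to $\{a,b,c,j\}$ equals $\mu_j\bigl(Q|_{\{a,b,c,j\}}\bigr)$, so it suffices to compute mutation of the four-vertex vortex itself.

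First, since $j$ is a source in $Q|_{\{a,b,c,j\}}$, there is no path $i\to j\to k$ with $i,k\in\{a,b,c\}$. Thus the first step of mutation adds nothing among $a,b,c$, and no $2$-cycles need cancelling. Second, reversing the arrows at $j$ makes $j$ a sink toward each of $a,b,c$, with the same weights. Third, the arrows among $a,b,c$ are unchanged, so they still support an oriented cycle. Therefore the subquiver is again a vortex with $j$ as its sink apex. Mutability is also unaffected, since mutation does not change which vertices are frozen, so at least three of the four vertices remain mutable.

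The only subtlety, and what I expect to be the main point to check, is that the definition of a vortex requires every pair among the four vertices to be joined by an arrow. Specifically, $j$ must be adjacent to each of $a,b,c$, and the cycle on $a,b,c$ must use all three edges. If this is the intended reading (as Figure~\ref{fig: vortex eg} suggests), then reversal preserves every edge weight incident to $j$ and the argument above goes through verbatim. If instead the definition allowed some missing arrows at $j$, the argument is unchanged: absent arrows remain absent and $j$ is still a source/sink. So in every case only the locality of mutation and the source/sink observation are needed.
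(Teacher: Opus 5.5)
Your proof is correct. Mutation at $j$ is local, so you can restrict to the four vertices; since $j$ is a source (or sink) there, it lies on no $2$-path, so mutation only reverses the arrows at $j$ and leaves the oriented $3$-cycle on $a,b,c$ untouched, and the same four vertices form a vortex with apex $j$ in $\mu_j(Q)$. This is exactly the ``short computation from the definitions'' that the paper alludes to without writing out.
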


\begin{prop}
\label{lem:forks are vortex free}
If $Q$ is a fork then $Q$ is vortex-free.
\end{prop}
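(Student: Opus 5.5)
The plan is a direct structural argument. I first pin down where the oriented $3$-cycles of a fork can sit, and then show that no fourth vertex can be a source or sink relative to such a cycle. The weight inequalities in Definition~\ref{def:fork} are not needed; only the orientation data, abundance, and the acyclicity of $F^{\pm}(r)$ are used.

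\emph{Step 1 (every vertex other than $r$ lies in $F^+(r)\cup F^-(r)$).} A fork $F$ is abundant and has at most one frozen vertex. So every pair of distinct vertices contains at least one mutable vertex, and is therefore joined by at least two arrows. In particular $F$ is complete, and every vertex $v\neq r$ lies in exactly one of $F^+(r)$ or $F^-(r)$. Next I pin down the orientation between the two sets. For $i\in F^+(r)$ and $j\in F^-(r)$, the first condition of Definition~\ref{def:fork} gives $b_{ij}>b_{ri}>0$, so the arrows go $i\to j$.

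\emph{Step 2 (every oriented $3$-cycle passes through $r$).} Take three vertices supporting an oriented cycle. They cannot all lie in $F^+(r)$, nor all in $F^-(r)$, since those subquivers are acyclic. Suppose two lie in $F^+(r)$, say $i,i'$, and one lies in $F^-(r)$, say $j$. By Step~1 we have $i\to j$ and $i'\to j$, so $j$ is a sink in the triple and there is no cycle. Symmetrically, if $i\in F^+(r)$ and $j,j'\in F^-(r)$, then $i$ is a source. Hence any oriented $3$-cycle has the form $r\to i\to j\to r$ with $i\in F^+(r)$ and $j\in F^-(r)$.

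\emph{Step 3 (no apex exists).} Suppose a $4$-vertex subquiver is a vortex with apex $a$, whose other three vertices support an oriented cycle. By Step~2 that cycle is $r\to i\to j\to r$ with $i\in F^+(r)$ and $j\in F^-(r)$, and $a\neq r$. By Step~1, $a$ lies in $F^+(r)$ or in $F^-(r)$.
\begin{itemize}
\item If $a\in F^+(r)$, then $r\to a$, and by Step~1 also $a\to j$. So $a$ is neither a source nor a sink.
\item If $a\in F^-(r)$, then $a\to r$ and $i\to a$, which again rules out both possibilities.
\end{itemize}
This contradicts the definition of a vortex (Definition~\ref{def:vortex}), so $F$ is vortex-free.

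The only real care needed is in Step~1: checking that abundance together with at most one frozen vertex really forces every pair of vertices in the fork to be joined by arrows. Once that is in place, the remaining steps are a short case check.
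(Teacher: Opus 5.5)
Your proof is correct. The paper gives no written argument, saying only that the proposition follows by a short computation from the definitions, and your two-step computation is exactly such a computation: every oriented $3$-cycle is $r\to i\to j\to r$ with $i\in F^+(r)$ and $j\in F^-(r)$, and any fourth vertex both receives an arrow from that cycle and sends one into it. This mirrors the structure of the paper's proof of \Cref{prop: cycle preserving is like a por}.

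One small point: your opening claim that the weight inequalities are not needed is slightly inaccurate. Step~1 does use the first fork condition, though only through its sign consequence $b_{ij}>b_{ri}>0$.
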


Since forks make up most of the mutation graph, the above result ensures that avoiding mutation at an apex of a vortex is often not restrictive.

\section{Forks and eventual sign coherence}
\label{sec:fork eventual coherence}

We extend the Warkentin's notion of forks (\Cref{def:fork}) to allow for more frozen vertices, yielding quivers we call ``ice forks''.  We show that ice forks enjoy many of the same properties as forks.  Moreover, they are eventually sign-coherent under almost all mutation sequences, which is crucial to our main result (\Cref{thm: geq 3 eventually sc}).

\begin{definition}
\label{def:ice fork}
Let $Q$ be a quiver with mutable vertices $[n]$ and frozen vertices $u_1, u_2, \ldots u_m$ (with $m\geq 1$). 
We say $Q$ is an \emph{ice fork} with point of return $r$ if 
each of the subquivers $Q|_{[n] \cup u_i}$ are forks with common point of return $r$. 
\end{definition}

\begin{exmp}
As we nearly observed in \Cref{eg: it is an ice fork}, the quiver on the left in \Cref{fig: fork sign coherent} is an ice fork with point of return~$3$. 
\end{exmp}

The purpose of this section is to prove \Cref{thm:Asym Sign Coherence}, which resolves a special case of Conjecture~\ref{conj: eventual sign coherence} and will be useful in establishing our main results.

\begin{theorem}
\label{thm:Asym Sign Coherence}
Let $Q$ be a rank $n$ ice fork and $\mathbf M = m_1 m_2 \cdots $ a reduced mutation sequence such that $m_1$ is not the point  of return and for some $i \in \N$, we have $[n] \subseteq \{m_k : k \leq i\}$.
Then for $j > i$, the quiver $Q^{(j)}_{\mathbf M}$ is sign-coherent. 
\end{theorem}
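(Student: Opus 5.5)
Throughout, I would work one frozen vertex at a time. A vertex $v$ is sign-coherent in $Q$ exactly when, among the arrows between $v$ and the frozen vertices $u_1,\dots,u_m$, all point toward $v$ or all point away from $v$. So it suffices to understand the orientation of the arrow between $v$ and each $u_\ell$, and then compare these orientations across $\ell$.

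For each $\ell$, the subquiver $F_\ell := Q|_{[n]\cup u_\ell}$ is a fork with point of return $r$. Mutation at mutable vertices commutes with restriction to $[n]\cup u_\ell$, since arrows among the other frozen vertices never affect arrows involving $u_\ell$. Hence $(Q^{(j)}_{\mathbf M})|_{[n]\cup u_\ell} = (F_\ell)^{(j)}_{\mathbf M}$.

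Since $m_1\neq r$ and $\mathbf M$ is reduced, the Tree Lemma (\Cref{lem:tree lemma}) and the remark following it show that every $(F_\ell)^{(j)}_{\mathbf M}$ with $j\ge 1$ is again a fork. Its point of return is $m_j$, the vertex just mutated. This is Warkentin's statement that mutating a fork at a vertex $k\neq r$ produces a fork with point of return $k$; the same applies to the new fork along a reduced sequence. The argument sees rank $n+1$ forks whose frozen vertex $u_\ell$ is never mutated, and I would check that Warkentin's proof goes through verbatim when one vertex is frozen.

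The key step is a \textbf{memory lemma} for forks. Let $F$ be a fork with point of return $r$, let $k\neq r$ be mutable, and set $F'=\mu_k(F)$, whose point of return is $k$. I would verify from the inequalities $b_{ij}>b_{ri},b_{jr}$ that the sets $F'^{+}(k)$ and $F'^{-}(k)$ are obtained as follows. Every vertex $x\neq k$ with $x\in F^+(k)$ lies in $F'^-(k)$, and vice versa. The orientation between two vertices $x,y\neq k$ is unchanged unless $x\to k\to y$ in $F$, in which case $b'_{xy}=b_{xy}+b_{xk}b_{ky}>0$ by abundance and the fork inequalities. The consequence I want: after mutating at $k$, the orientation of the arrow between $k$ and $u_\ell$ is the reverse of its orientation before. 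Moreover, this orientation is never changed again by mutations at vertices $x\neq k$. The reason is that each such mutation is at the point of return's neighbor-structure, where the composite-path term dominates and keeps sign. I would derive this from the fork inequalities: in a fork with point of return $p$, mutating at $x\neq p$ changes the orientation between $k$ and $u$ (both $\neq x$) only if there is a path $k\to x\to u$ or $u\to x\to k$. In that case $|b_{ku}|$ only grows in the direction $k\to u$ already consistent, or flips only when $p\in\{k,u\}$.

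The main obstacle is to pin down exactly when an arrow $k$--$u_\ell$ can flip. I expect the answer to be: an arrow $k$--$u_\ell$ changes orientation under mutation at $x$ only if $x=k$. That does not seem right as stated, since mutations at other vertices add $b_{kx}b_{xu}$. So I would first try proving the sharper claim that in every fork along the path, for each mutable $k$, the sign of $b_{k u_\ell}$ equals a sign determined purely by the combinatorial data (whether $k$ lies in $F^+(p)$ or $F^-(p)$, and whether $u_\ell$ does), independent of $\ell$.

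Concretely, once every vertex has been mutated (time $i$), I claim that for $j>i$, each mutable $k\neq m_j$ satisfies the following. The vertex $u_\ell$ lies on the same side of $k$ as the current point of return $m_j$ relative to the acyclic structure; more precisely, $b_{k u_\ell}$ has the same sign as $b_{k m_j}$ when $k$ was mutated more recently than... I would establish this by induction on $j$. The statement to carry is that in $(F_\ell)^{(j)}$ with $j>i$, the frozen vertex $u_\ell$ sits in a position in the acyclic orderings of $F^{\pm}(m_j)$ that does not depend on $\ell$. In particular, $u_\ell\in F^+(m_j)$ iff $u_{\ell'}\in F^+(m_j)$, and the same holds for the relative order with each mutable vertex. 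The base of this induction is the first time each vertex is mutated: mutating $k$ at the point of return of a fork puts $u_\ell$ opposite to where it was and in a position dominated by the large composite arrows, uniformly in $\ell$. That uniformity in $\ell$ is exactly sign coherence of $k$, and the induction then shows it is preserved, giving the theorem.
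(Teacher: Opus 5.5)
There is a genuine gap. Your setup is fine: restriction to $F_\ell=Q|_{[n]\cup u_\ell}$ commutes with mutation, and by \Cref{lem:tree lemma} each $(F_\ell)^{(j)}_{\mathbf M}$ with $j\ge 1$ is a fork with point of return $m_j$, so every mutation in $\mathbf M$ is cycle-preserving. The part that is supposed to produce sign coherence, however, does not work.

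\begin{itemize}
\item The memory lemma is false, as you suspect.
\item The replacement induction carries the conclusion itself as its hypothesis but gives no argument for the inductive step.
\item Its base case is wrong. Mutating at $k$ reverses every arrow at $k$, so $k$ is red or green in $\mu_k(\cdot)$ exactly when it was before; no uniformity in $\ell$ is created at that moment.
\end{itemize}
A concrete instance is the paper's example with the ice fork of \Cref{fig: another sign coherent fork} and $\mathbf M=3\overline{12}$. After $\mu_3$, vertex $3$ is still neither red nor green, while the old point of return $1$ has become red. This is also why the theorem only claims sign coherence for $j>i$, not $j\ge i$.

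The missing idea is that $k$ becomes sign-coherent at the mutation immediately \emph{after} the one at $k$. This happens through exactly the kind of flip that kills your memory lemma. After $\mu_k$, each $F_\ell$ is a fork with point of return $k$. Let $x\neq k$ be the next mutated vertex and say $x\to k$.
\begin{itemize}
\item For each frozen $w$ with $k\to w$, the fork inequalities force $w\to x$. So $k\to w\to x\to k$ is an oriented cycle, and cycle preservation of $\mu_x$ flips the arrow to $w\to k$.
\item Frozen $w$ with $w\to k$ keep that orientation, since the only $2$-paths through $x$ joining them to $k$ run $w\to x\to k$.
\end{itemize}
So $k$ becomes red (green if $k\to x$); this is \Cref{lem:mutation makes more color}.

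You also need persistence, which the paper gets from \Cref{lem: skip colored} and \Cref{lem: colors stay}. Your own remark that flips require the point of return to lie in $\{k,u\}$ would give it directly. All oriented $3$-cycles of a fork pass through its point of return. Hence a mutation at $m_{s+1}$ can flip an arrow $k$--$u$ with $k\neq m_{s+1}$ only if $k=m_s$ (with $m_0:=r$), and at that step the flip is the uniform one just described. Mutation at $k$ itself reverses all its arrows and so preserves sign coherence.

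With these two facts, the theorem follows by looking, for each vertex, at its last mutation strictly before time $j$. Without them, the proposal does not prove the statement.
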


Lemmas~\ref{lem: skip colored}, \ref{lem: colors stay}, and \ref{lem:mutation makes more color} describe how ascents increase the number of sign-coherent vertices, and control how they change the weights between (1) mutable vertices which are not sign-coherent and (2) frozen vertices.

\begin{defn}
\label{def:cycle-preserving}
A mutation $j$ is \emph{cycle-preserving} for a quiver $Q$ if whenever $Q|_{ijk}$ is an oriented 3-cycle containing $j$ then so is $\mu_j(Q)|_{ijk}$. %every oriented 3-cycle through $j$ in $Q$ is still an oriented 3-cycle in $\mu_j(Q)$.
%note there is no such thing as 4-cycle or 5-cycle preserving.
A mutation sequence $\mathbf M = m_1 m_2 \cdots $ is cycle-preserving if $m_{\ell}$ is cycle-preserving for $Q_{\mathbf M}^{(\ell-1)}$ for all $\ell \geq 1$. 
\end{defn}

\begin{rem}
When $j$ is cycle-preserving the orientations of $\mu_j(Q)$ are entirely determined by the orientations of $Q$.
This property captures a broader principle, that much of quiver mutation combinatorics is captured by the orientations of the arrows in the quiver, with all `large' weights $b_{ij}$ behaving essentially the same.
See also the discussion in the introduction of \cite{Warkentin}, which attributes a related conjecture to Dieter Happel \cite{happel2009reconstruction}. %\cS{Happel's question is if the mutation class of an acyclic quiver is determined by 'basic equality', that is treat all weights $>2$ as $2$, does this operation change the mutation (or exchange) graph? That fork mutations are cycle preserving is a refinement/consequence/manifestation of 'yes'.}
\end{rem}

\begin{rem}
In particular, if $j$ is cycle-preserving, then $j$ cannot be the elbow of a $3$-vertex acyclic subquiver of $\mu_j(Q)$.
\end{rem}

\begin{lem}
\label{lem:forks preserve}
Suppose that $\mu_i(Q)$ is a fork (resp. ice fork) with point of return $i$.
Then $i$ is cycle-preserving for $Q$.
\end{lem}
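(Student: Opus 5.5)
The plan is to compute directly in the three-vertex subquiver, read backwards from the fork $F = \mu_i(Q)$. Write $F = \mu_i(Q)$. Since $\mu_i$ is an involution, $Q = \mu_i(F)$. Mutation at $i$ commutes with restriction to any subquiver containing $i$, because the new weight $b'_{jk}$ depends only on $b_{ji}, b_{ik}, b_{jk}$. So it suffices to show the following: whenever $Q|_{ijk}$ is an oriented $3$-cycle, $F|_{ijk}$ is also an oriented $3$-cycle.

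First I reduce the ice fork case to the fork case. Arrows between frozen vertices are ignored, so an oriented $3$-cycle through the mutable vertex $i$ contains at most one frozen vertex $u$. Then $\{i,j,k\} \subseteq [n]\cup\{u\}$. By \Cref{def:ice fork}, $F|_{[n]\cup u}$ is a fork with point of return $i$, and restricting commutes with $\mu_i$. So we may assume $F$ is a fork with point of return $i$.

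Now let $Q|_{ijk}$ be an oriented cycle. By relabeling $j,k$, and using that mutation commutes with global reversal of arrows, we may assume $i \to j \to k \to i$ in $Q$, with weights $a = b_{ij} > 0$, $b = b_{jk} > 0$ and $c = b_{ki} > 0$. Mutating at $i$ reverses the arrows at $i$, so in $F$ we have $j \to i$ with weight $a$ and $i \to k$ with weight $c$. The path $k \to i \to j$ in $Q$ adds $ca$ to $b_{kj}$, so
\[ b^F_{kj} = ca - b. \]
In particular $j \in F^-(i)$ and $k \in F^+(i)$.

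The first condition of \Cref{def:fork}, applied with point of return $r = i$, gives $b^F_{kj} > b^F_{ik} = c > 0$ (and also $b^F_{kj} > b^F_{ji} = a$). Hence $F$ has the arrow $k \to j$. So $F|_{ijk}$ has arrows $j \to i \to k \to j$, which is an oriented $3$-cycle containing $i$. This is exactly the cycle-preserving condition of \Cref{def:cycle-preserving}.

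The main point needing care is the bookkeeping rather than any hard estimate. One must check that the fork inequality is applied with the correct roles: the vertex $k$ is in $F^+(r)$ and $j$ is in $F^-(r)$, and the sign convention of $b_{kj}$ matches that of $b_{ij}$ in the definition. One must also check the reduction to a single frozen vertex in the ice fork case. Beyond that, the argument is a single application of the mutation rule.
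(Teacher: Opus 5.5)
Your proof is correct and follows the same route as the paper: mutating at $i$ reverses the arrows at $i$, and the fork inequality at the point of return $i$ (applied with $k \in F^+(i)$, $j \in F^-(i)$) forces $k \to j$ in $\mu_i(Q)$, so the oriented $3$-cycle survives. You also make explicit two steps the paper's short argument leaves implicit, namely the computation $b^F_{kj} = ca - b$ and the reduction from ice forks to forks by restricting to $[n]\cup\{u\}$.
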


\begin{proof}
Suppose $Q|_{ijk}$ is an oriented cycle with $b_{ij} > 0$.
Mutation at $i$ will reverse the arrows between $i$ and $j,k$. 
As $\mu_i(Q)$ is a fork with point of return $i$, we must have $k \rightarrow j$ in $\mu_i(Q)$. So $\mu_j(Q)|_{ijk}$ is again an oriented cycle.
\end{proof}

By Lemma~\ref{lem:tree lemma}, we have the following: 

\begin{cor}
\label{cor: cycle-preserving tree lem}
If $Q$ is a quiver with a vertex $i$ such that $\mu_i(Q)$ is a fork with point of return~$i$, then every reduced mutation sequence $\mathbf M = i m_2 \cdots$ is cycle-preserving for $Q$.
\end{cor}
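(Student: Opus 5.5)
Let $Q' = \mu_i(Q)$, which by hypothesis is a fork with point of return $i$, and let $\mathbf M = i m_2 m_3 \cdots$ be reduced. The plan is to show by induction on $\ell \geq 1$ that each quiver $Q^{(\ell)}_{\mathbf M}$ is a fork whose point of return is $m_\ell$. Once this is established, each step $\ell+1 \geq 2$ mutates $Q^{(\ell)}_{\mathbf M}$ at $m_{\ell+1}$, and we need this mutation to be cycle-preserving. By \Cref{lem:forks preserve}, applied to the quiver $Q^{(\ell)}_{\mathbf M}$ and the vertex $m_{\ell+1}$, it suffices that $\mu_{m_{\ell+1}}(Q^{(\ell)}_{\mathbf M}) = Q^{(\ell+1)}_{\mathbf M}$ is a fork with point of return $m_{\ell+1}$. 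This is exactly the inductive statement at step $\ell+1$. The first mutation, at $i$, is cycle-preserving directly by \Cref{lem:forks preserve}, since $\mu_i(Q)$ is a fork with point of return $i$.

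The induction itself comes from the Tree Lemma (\Cref{lem:tree lemma}) together with the standard content of Warkentin's proof. If $F$ is a fork with point of return $r$ and $k \neq r$, then $\mu_k(F)$ is again a fork, now with point of return $k$. This is the mechanism behind the tree structure: the component of $\Gamma - e$ containing $Q'$ is an $(n-1)$-ary tree, and each of its vertices is a fork whose return edge points back toward the root. Since $\mathbf M$ is reduced, $m_{\ell+1} \neq m_\ell$, and $m_\ell$ is the point of return of $Q^{(\ell)}_{\mathbf M}$. So every mutation after the first is a non-return mutation of a fork, and it produces a fork with point of return equal to the vertex just mutated. The base case is $\ell = 1$, where $Q^{(1)}_{\mathbf M} = Q'$ has point of return $i = m_1$.

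The main obstacle is that \Cref{lem:tree lemma} as quoted states only the tree shape of the mutation graph, not the stronger claim that non-return mutations of forks are forks with the new point of return. I would invoke this from Warkentin's Lemma 2.8 and its proof. Failing that, I would verify it directly: the fork inequalities $b_{ij} > b_{ri}$ and $b_{ij} > b_{jr}$ for $\mu_k(F)$ follow from the mutation rule, since the new weights through $k$ grow by products of abundant weights. For the ice fork version, I would apply the same argument to each subquiver $Q|_{[n]\cup u_j}$ separately. This works because mutation at a mutable vertex commutes with restricting to $[n] \cup u_j$, so each subquiver stays a fork with the same common point of return.
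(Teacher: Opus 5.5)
Your proposal is correct and follows essentially the same route as the paper. The paper derives the corollary in one line from \Cref{lem:tree lemma}, combined implicitly with \Cref{lem:forks preserve} and with the fact from Warkentin's proof that each quiver in the tree is a fork whose point of return is the last-mutated vertex. Your induction simply makes that implicit step explicit.
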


Incidentally, this lets us construct examples of quivers with only cycle-preserving mutation sequences.

\begin{cor}
\label{cor: if all forks then all sequences preserve all cycles}
Suppose $Q$ is a quiver such that for every mutable vertex $i$, the quiver $\mu_i(Q)$ is an ice fork with point of return $i$. 
Then every reduced mutation sequence is cycle-preserving for $Q$.
\end{cor}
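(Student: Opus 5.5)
The plan is to reduce to \Cref{cor: cycle-preserving tree lem}, applied separately to each single-frozen-vertex subquiver. Fix a reduced mutation sequence $\mathbf M = m_1 m_2 \cdots$ on $Q$. By hypothesis, $\mu_{m_1}(Q)$ is an ice fork with point of return $m_1$. Since \Cref{cor: cycle-preserving tree lem} is stated for forks, and forks have at most one frozen vertex, I will not apply it to $Q$ directly. Instead I pass to the subquivers $Q_t := Q|_{[n]\cup u_t}$, one for each frozen vertex $u_t$.

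The first step is a compatibility observation: restricting to a vertex set that contains all mutable vertices commutes with mutation. Indeed, mutation at $j$ changes $b_{ik}$ using only $b_{ij}$ and $b_{jk}$, and $j$ is mutable, so it lies in the retained set. Consequently:
\begin{itemize}
\item $\mu_{m_1}(Q_t) = \mu_{m_1}(Q)|_{[n]\cup u_t}$;
\item more generally, $(Q_t)^{(\ell)}_{\mathbf M} = Q^{(\ell)}_{\mathbf M}|_{[n]\cup u_t}$ for every $\ell$.
\end{itemize}
By \Cref{def:ice fork}, each $\mu_{m_1}(Q_t)$ is a fork with point of return $m_1$. The sequence $\mathbf M$ is still reduced when regarded on $Q_t$, so \Cref{cor: cycle-preserving tree lem} shows that $\mathbf M$ is cycle-preserving for each $Q_t$.

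The second step transfers this back to $Q$. Cycle-preservation at step $\ell$ is a condition on the triples $\{i, m_\ell, k\}$ for which $Q^{(\ell-1)}_{\mathbf M}|_{i m_\ell k}$ is an oriented $3$-cycle. Such a triple contains the mutable vertex $m_\ell$. By the convention in \Cref{def:elbow-oriented}, an oriented cycle has at most one frozen vertex, so each such triple lies inside $[n]\cup u_t$ for some $t$. (If it has no frozen vertex, any $t$ works, and $t$ exists because $m\ge 1$.) By the compatibility of restriction with mutation, both the triple and its image after mutating at $m_\ell$ are seen inside $(Q_t)^{(\ell-1)}_{\mathbf M}$ and $(Q_t)^{(\ell)}_{\mathbf M}$. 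Cycle-preservation for $Q_t$ therefore gives that the triple is again an oriented cycle in $Q^{(\ell)}_{\mathbf M}$. Hence $\mathbf M$ is cycle-preserving for $Q$.

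The only point needing care is this localization: checking that the definition of cycle-preserving depends only on $3$-vertex subquivers, and that restriction commutes with mutation. Neither step is difficult. Nothing in the argument requires $m_1$ to be any particular vertex, so the hypothesis that $\mu_i(Q)$ is an ice fork for every $i$ is exactly what makes the conclusion hold for all reduced sequences.
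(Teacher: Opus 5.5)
Your proposal is correct and follows the paper's implicit argument, which obtains this corollary directly from Corollary~\ref{cor: cycle-preserving tree lem} (the Tree Lemma together with Lemma~\ref{lem:forks preserve}) applied at the first mutation $m_1$. Your restriction to the single-frozen subquivers $Q|_{[n]\cup u_t}$ uses two facts: restriction commutes with mutation, and an oriented $3$-cycle contains at most one frozen vertex; this simply spells out the passage from forks to ice forks that the paper leaves unstated.
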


\begin{figure}
\centering
\begin{tikzpicture}[->, >={Stealth[round]}, node distance=3cm and 4cm, main/.style = {circle, draw, fill=none, minimum size=6mm, inner sep=0pt}, every edge/.append style={thick}, square/.style={regular polygon,regular polygon sides=4}]

\begin{scope}[shift = {(-4,0)}]
\filldraw[black] (-0.8,0) node[main, fill = white] (1) {\textcolor{black}{$\boldsymbol{1}$}};
\filldraw[black] (0.8,0) node[main, fill = white] (2) {\textcolor{black}{$\boldsymbol{2}$}};

\path
 (1) edge["$3$"] (2);
\end{scope}

\filldraw[black] (0,0)++(90:1cm) node[main, fill = white] (1) {\textcolor{black}{$\boldsymbol{1}$}};
\filldraw[black] (0,0)++(-30:1cm) node[main, fill = white] (2) {\textcolor{black}{$\boldsymbol{2}$}};
\filldraw[black] (0,0)++(210:1cm) node[main, fill = white, square] (3) {\textcolor{black}{$\boldsymbol{u}$}};

\path
 (1) edge["$10$"] (2)
 (2) edge["$3$"] (3)
 (3) edge["$2$"] (1);

\begin{scope}[shift = {(4,0)}]
\filldraw[black] (0,0)++(90:1cm) node[main, fill = white] (1) {\textcolor{black}{$\boldsymbol{1}$}};
\filldraw[black] (0,0)++(-30:1cm) node[main, fill = white] (2) {\textcolor{black}{$\boldsymbol{2}$}};
\filldraw[black] (0,0)++(210:1cm) node[main, fill = white] (3) {\textcolor{black}{$\boldsymbol{3}$}};

\path
 (1) edge["$7$"] (2)
 (2) edge["$6$"] (3)
 (3) edge["$5$"] (1);
\end{scope}

\begin{scope}[shift = {(9,0)}]
\filldraw[black] (0,0) node[main, fill = white] (a) {\textcolor{black}{$\boldsymbol{4}$}};
\filldraw[black] (0,0)++(90:2cm) node[main, fill = white] (1) {\textcolor{black}{$\boldsymbol{1}$}};
\filldraw[black] (0,0)++(-30:2cm) node[main, fill = white] (2) {\textcolor{black}{$\boldsymbol{2}$}};
\filldraw[black] (0,0)++(210:2cm) node[main, fill = white] (3) {\textcolor{black}{$\boldsymbol{3}$}};

\path
 (1) edge["$3$"'] (3)
 (2) edge["$3$"'] (1)
 (3) edge["$3$"'] (2)
 (a) edge["$3$", pos=0.3] (1)
 (a) edge["$7$", pos=0.2, outer sep=-2] (2)
 (3) edge["$3$", pos=0.8, outer sep=-2] (a);
\end{scope}
\end{tikzpicture}
\caption{Four quivers with only cycle-preserving mutation sequences.}
\label{fig: cycle preserving egs}
\end{figure}

\begin{exmp}
By Corollary~\ref{cor: if all forks then all sequences preserve all cycles}, any reduced mutation sequence on the rightmost three quivers in Figure~\ref{fig: cycle preserving egs} is cycle-preserving.
This is vacuously true for the leftmost quiver. 
\end{exmp}

\begin{prop}
\label{prop: cycle preserving is like a por}
Let $Q$ be a complete quiver. 
Suppose that $j$ is cycle-preserving and not the apex of a vortex subquiver of $Q$. Then every oriented $3$-cycle in $\mu_j(Q)$ includes $j$ and $\mu_j(Q)$ is vortex-free.
\end{prop}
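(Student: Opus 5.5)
The plan is a purely local, orientation-only argument. Since $Q$ is complete, every pair of vertices (at least one mutable) is joined by a nonzero arrow in $Q$. Mutation at $j$ only changes arrows between neighbours of $j$ and reverses arrows at $j$, so the claims reduce to $3$- and $4$-vertex subquivers containing $j$. I would isolate the following basic fact first.

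\textbf{Key observation.} In $\mu_j(Q)$, $j$ is never the elbow of an acyclic triangle. Suppose $\mu_j(Q)$ contains $x\to j\to y$ together with $x\to y$. Mutating back at $j$ (mutation is an involution) reverses the two arrows at $j$, giving $y\to j\to x$. The path $x\to j\to y$ in $\mu_j(Q)$ adds $b_{xj}b_{jy}>0$ to $b_{xy}$, so in $Q$ the arrow $x\to y$ is still present. Hence $Q|_{jxy}$ is the oriented cycle $x\to y\to j\to x$, while $\mu_j(Q)|_{jxy}$ is acyclic. This contradicts the assumption that $j$ is cycle-preserving; it is essentially the remark following Definition~\ref{def:cycle-preserving}. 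Where one of $x,y$ is frozen, the count of frozen vertices must be checked against the convention for ``oriented cycle'' in Definition~\ref{def:elbow-oriented}.

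\textbf{Step 1: every oriented $3$-cycle in $\mu_j(Q)$ contains $j$.} Suppose $\mu_j(Q)|_{ikl}$ is an oriented cycle $i\to k\to l\to i$ with $j\notin\{i,k,l\}$. By completeness $j$ is joined to each of $i,k,l$. If $j$ were a source or sink relative to $\{i,k,l\}$ in $\mu_j(Q)$, then $\{j,i,k,l\}$ would be a vortex with apex $j$ in $\mu_j(Q)$. By Proposition~\ref{prop:apex preserves} applied to $\mu_j(Q)$ (using $\mu_j\mu_j(Q)=Q$), $j$ would then be the apex of a vortex in $Q$, contrary to hypothesis. So each of $i,k,l$ is labelled ``into $j$'' or ``out of $j$'', and both labels occur. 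Going around the cyclic order $i\to k\to l\to i$, some consecutive pair $x\to y$ must switch from an ``into $j$'' vertex to an ``out of $j$'' vertex. That gives $x\to j\to y$ with $x\to y$ in $\mu_j(Q)$, an acyclic triangle with elbow $j$, contradicting the Key observation.

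\textbf{Step 2: $\mu_j(Q)$ is vortex-free.} Take a $4$-vertex subquiver of $\mu_j(Q)$ which is a vortex, with oriented $3$-cycle $C$ and apex $a$. By Step 1, $C$ contains $j$, say $C$ is $j\to x\to y\to j$, and $a\ne j$. If $a$ is a source, then $a\to j\to x$ together with $a\to x$ makes $j$ the elbow of an acyclic triangle in $\mu_j(Q)$. If $a$ is a sink, use $y\to j\to a$ with $y\to a$ instead. Either way the Key observation gives a contradiction.

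\textbf{Main obstacle.} The main obstacle I expect is the bookkeeping in Step 1: choosing the consecutive pair correctly, since the other transition (``out of $j$'' to ``into $j$'') does not determine the orientation of $Q|_{xy}$. The second delicate point is checking the frozen-vertex conventions in the definitions of oriented cycle and vortex; all arguments use only orientations, which is where completeness is essential.
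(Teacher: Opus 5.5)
Your proof is correct and is essentially the paper's argument: both halves rest on the fact that a cycle-preserving $j$ is never the elbow of an acyclic triangle in $\mu_j(Q)$ (the paper uses this implicitly when it asserts $i\to k$ and $\ell\to k$ in $\mu_j(Q)$), and your Step~2 is the paper's check that a fourth vertex has mixed arrows to a cycle through $j$, phrased directly in $\mu_j(Q)$ rather than by cases on the arrow between $\ell$ and $j$ in $Q$. The only real difference is in Step~1, where you transport the non-apex hypothesis to $\mu_j(Q)$ via Proposition~\ref{prop:apex preserves} and find an in-to-out transition around the cycle; this also covers the case the paper skips with its claim that there are two directed $2$-paths through $j$, namely $j$ a source or sink of $Q|_{ijk\ell}$ with $Q|_{ik\ell}$ acyclic. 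The frozen-vertex check you flag is automatic: your observation only needs one of $x,y$ to be mutable, and every triangle you apply it to lies in a $3$-cycle or a vortex, so it contains at most one frozen vertex.
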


\begin{proof}
Consider a $4$-vertex subquiver $Q|_{ijk\ell}$.  Since $j$ is not the apex of a vortex, there must be two directed $2$-paths through $j$ in $Q|_{ijk\ell}$.  Without loss of generality (possibly using global reversal of arrows), say these $2$-paths are $i \rightarrow j \rightarrow k$ and $\ell \rightarrow j \rightarrow k$.  Then we have $i \rightarrow k$ and $\ell \rightarrow k$ in $\mu_j(Q)$, so $\mu_j(Q)|_{ik\ell}$ is not an oriented $3$-cycle.

We have established that any cycle in $\mu_j(Q)$ includes $j$.  Fix any $3$ cycle $i \rightarrow j \rightarrow k \rightarrow i$ in $\mu_j(Q)$, and we will show that each vertex $\ell$ cannot be the apex of a vortex on $i,j,k,\ell$.  If $\ell \rightarrow j$ in $Q$, then $\ell \rightarrow i$ in $\mu_j(Q)$ but $\ell \leftarrow j$, so $\ell$ is not the apex of a vortex.  On the other hand, if $\ell \leftarrow j$ in $Q$, then we have $\ell \rightarrow j$ and $\ell \leftarrow k$ in $\mu_j(Q)$, so $\ell$ is again not the apex of a vortex.  
\end{proof}

\begin{rem}
\Cref{cor: cycle-preserving tree lem} and \Cref{prop: cycle preserving is like a por} show that cycle-preserving mutations generalize mutations of forks that do not occur at the point of return.
However, cycle-preserving mutations are more general, including (for example) sink or source mutations in an acyclic quiver. 
\end{rem}

We now turn to proving our key lemmas for \Cref{thm:Asym Sign Coherence}.
By \Cref{cor: cycle-preserving tree lem}, it suffices to describe how cycle-preserving mutations change the color (or lack there-of) of the vertices. 

\begin{lem}
\label{lem: skip colored}
%Let $R$ be an ice fork. 
%Suppose $j$ is an ascent of $R$ which is red or green. 
%Then every red or green vertex in $R$ is also red or green in $\mu_j(R)$. 
%For each mutable vertex $k$ which is neither red nor green, either
%the weights between the frozens and $k$ are unchanged, or
%$k$ becomes red or green.
Let $Q$ be a complete quiver and $j$ a cycle-preserving vertex which is red or green. % and adjacent to all other vertices.
Then every red or green vertex of $Q$ is red or green in $\mu_j(Q)$. 
For each mutable vertex $k$ which is neither red nor green, either the weights between $k$ and the frozen vertices are unchanged, or $k$ becomes red or green.
\end{lem}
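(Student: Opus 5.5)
By global reversal of arrows (which commutes with mutation and swaps red and green) we may assume that $j$ is green, i.e.\ $j \rightarrow u$ for every frozen vertex $u$. The plan is to compute the arrows between each mutable vertex and each frozen vertex in $\mu_j(Q)$ directly from Definition~\ref{def:mutation}, split into cases according to the orientation of the arrow between $k$ and $j$, and use the cycle-preserving hypothesis (Definition~\ref{def:cycle-preserving}) to control the one case where a sign change can occur. Completeness guarantees that every pair $\{k,j\}$ and every pair $\{k,u\}$ is joined by at least one arrow. Hence every $3$-vertex subquiver $Q|_{kju}$ is either acyclic or an oriented cycle (it has exactly one frozen vertex), and every mutable vertex has a nonzero $c$-vector.

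\emph{The vertex $j$ itself.} Mutation at $j$ reverses all arrows incident to $j$, so $u \rightarrow j$ for all frozen $u$ in $\mu_j(Q)$. Thus $j$ becomes red.

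\emph{Mutable $k \neq j$ with $j \rightarrow k$.} Since $j$ is green, there is no $2$-path $u \rightarrow j \rightarrow k$. There is also no $2$-path $k \rightarrow j \rightarrow u$, because the arrow goes $j \rightarrow k$. So $b_{ku}$ is unchanged for every frozen $u$. Such a $k$ keeps its color if it had one, and otherwise its weights to the frozen vertices are unchanged, as the lemma allows.

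\emph{Mutable $k$ with $k \rightarrow j$.} For each frozen $u$ we have the path $k \rightarrow j \rightarrow u$, and only the term $b_{kj}b_{ju}>0$ is added to $b_{ku}$. Consider the frozen vertices $u$ with $k \rightarrow u$ already in $Q$. For these the arrow stays $k \rightarrow u$, with larger weight. Now consider the frozen vertices $u$ with $u \rightarrow k$. Then $Q|_{kju}$ is the oriented cycle $k \rightarrow j \rightarrow u \rightarrow k$, and this is the key step. Since $j$ is cycle-preserving, $\mu_j(Q)|_{kju}$ is again an oriented cycle. In $\mu_j(Q)$ we have $j \rightarrow k$ and $u \rightarrow j$, so the cycle must be $u \rightarrow j \rightarrow k \rightarrow u$, forcing $k \rightarrow u$. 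Hence in $\mu_j(Q)$ every frozen vertex receives an arrow from $k$, so $k$ is green. This covers three situations: a green $k$ stays green, a red $k$ becomes green, and an uncolored $k$ becomes green.

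Combining the cases gives both claims. The only nontrivial point is the use of cycle preservation to pin down the post-mutation orientation of the arrow between $k$ and $u$ when $u \rightarrow k$; without it, the added weight $b_{kj}b_{ju}$ need not overcome $b_{uk}$. I would double-check that the paper's notion of an oriented cycle does include $3$-vertex subquivers with one frozen vertex, which Definition~\ref{def:elbow-oriented} confirms.
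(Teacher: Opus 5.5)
Your proposal is correct and follows essentially the same route as the paper: reduce to $j$ green, note that only vertices $k$ with $k \rightarrow j$ can have their frozen arrows change, and use cycle preservation on $Q|_{kju}$ when $u \rightarrow k$ to force $k \rightarrow u$ in $\mu_j(Q)$. The differences are cosmetic. You treat the subcase $k \rightarrow u$ by a direct weight computation where the paper observes that $j$ is the elbow of $Q|_{jku}$. You also note explicitly that $j$ itself becomes red, which the paper leaves implicit.
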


\begin{proof}
Without loss of generality, say that $j$ is green.
Mutation at $j$ can only add arrows directed towards frozen vertices from mutable vertices $k$ such that $j \leftarrow k$.
Thus the color of~$k$ will be unchanged if $j \rightarrow k$ or if $k$ is already green.
So we may assume that $k$ is not green and $j \leftarrow k$.
%First we show that all green and red vertices are green or red in $\mu_j(Q)$.
%So $j$ preserves all green vertices.
%Suppose instead we have $j \leftarrow k$ for $k$ red.

Fix any such mutable vertex $k$ and a frozen vertex $u$, so we have a path $k \rightarrow j \rightarrow u$ in~$Q$. 
If $k \rightarrow u$ then $j$ is the elbow in $Q|_{jku}$ and $\mu_j(Q)|_{jku}$ is an oriented cycle.
If $k \leftarrow u$ then $Q|_{jku}$ is an oriented cycle, because $j$ is cycle-preserving we conclude that $\mu_j(Q)|_{jku}$ is an oriented cycle.
Thus $k$ is green in $\mu_j(Q)$.
%If $j \rightarrow k$, then $j$ is a source in $R|_{jku}$, so the arrows between $k$ and each frozen vertex are unchanged.
%If $j \leftarrow k$ then $j$ is an ascent in $R|_{jku}$. 
%Therefore $R|_{jku}$ is cyclically oriented in $\mu_j(R)$ with $k \rightarrow u$, and so $k$ is green in $\mu_j(R)$. 
\end{proof}

\Cref{lem: skip colored} gives a short combinatorial proof that sign coherence is preserved by mutations of forks.

\begin{prop}
Suppose $Q$ is a sign-coherent ice fork with point of return $r$.
Then for every mutation sequence $\mathbf M = m_1 m_2 \cdots$ of $Q$ with $m_1 \neq r$, the quiver $Q^{(i)}_{\mathbf M}$ is sign-coherent.
\end{prop}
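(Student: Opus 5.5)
The plan is to walk down the tree of ice forks given by the Tree Lemma and apply \Cref{lem: skip colored} at every step. I will assume $\mathbf M$ is reduced, as in \Cref{thm:Asym Sign Coherence}. For non-reduced sequences, a pair $m_\ell = m_{\ell+1}$ simply returns to an earlier quiver, so I would first cancel such pairs; this needs care only if the walk comes back to $Q$ and then mutates at $r$, which I would treat as excluded by the intended hypothesis. The induction statement is: for every $\ell \geq 1$, the quiver $Q^{(\ell)}_{\mathbf M}$ is a sign-coherent ice fork with point of return $m_\ell$, and $m_{\ell+1}$ is cycle-preserving for it.

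\textbf{Step 1 (ice forks form a tree).} Let $u_1,\dots,u_m$ be the frozen vertices. Mutation at a mutable vertex commutes with restricting to $[n]\cup u_i$: the arrows among $[n]\cup u_i$ after mutating only depend on arrows among $[n]\cup u_i$ before. So $(Q^{(\ell)}_{\mathbf M})|_{[n]\cup u_i}$ is obtained from the fork $Q|_{[n]\cup u_i}$ by the same reduced sequence, which avoids $r$ at its first step. By \Cref{lem:tree lemma}, together with the remark that the unique reduced path back to the root begins at the point of return, each of these restrictions is a fork with point of return $m_\ell$. Hence $Q^{(\ell)}_{\mathbf M}$ is an ice fork with point of return $m_\ell$. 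Since forks are abundant, every quiver in the sequence is complete.

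\textbf{Step 2 (cycle preservation).} For $\ell \geq 0$, the quiver $\mu_{m_{\ell+1}}(Q^{(\ell)}_{\mathbf M}) = Q^{(\ell+1)}_{\mathbf M}$ is an ice fork with point of return $m_{\ell+1}$ by Step 1. \Cref{lem:forks preserve} then says $m_{\ell+1}$ is cycle-preserving for $Q^{(\ell)}_{\mathbf M}$. Equivalently, \Cref{cor: cycle-preserving tree lem} shows the whole sequence is cycle-preserving.

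\textbf{Step 3 (induction on coherence).} Suppose $Q^{(\ell)}_{\mathbf M}$ is sign-coherent; this holds for $\ell=0$ by hypothesis. The vertex $j=m_{\ell+1}$ is red or green, cycle-preserving, and the quiver is complete. So \Cref{lem: skip colored} shows every red or green vertex stays red or green, i.e. every vertex $k\neq j$ remains sign-coherent. The vertex $j$ itself is handled directly: mutation at $j$ only reverses the arrows between $j$ and the frozen vertices, so $j$ switches between red and green. Therefore $Q^{(\ell+1)}_{\mathbf M}$ is sign-coherent.

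The main obstacle is Step 1, namely transferring the Tree Lemma from forks with one frozen vertex to ice forks. I must make sure that the point of return after each mutation is the same vertex $m_\ell$ for every restriction $Q|_{[n]\cup u_i}$, so that the ``common point of return'' condition holds. I expect the restriction-commutes-with-mutation observation to settle this, because the tree structure of each single-frozen restriction pins down the point of return as the last mutated vertex.
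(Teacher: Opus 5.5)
Your proposal is correct and is essentially the paper's argument. The paper obtains this proposition directly from \Cref{lem: skip colored}, using \Cref{cor: cycle-preserving tree lem} (via the Tree Lemma) to ensure that each mutation is a cycle-preserving mutation, at a red or green vertex, of a complete ice fork, with the mutated vertex itself simply switching color. Restricting to reduced sequences is the right reading of the hypothesis: a non-reduced sequence such as $m_1 m_1 r\cdots$ returns to $Q$ and then mutates at the point of return, after which the quiver need not be sign-coherent.
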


Thus we can restrict our attention to the cycle-preserving mutations which are neither red nor green.

\begin{lem}
\label{lem: colors stay}
Let $Q$ be an ice fork with $j$ a mutable vertex that is not the point of return. 
Let $k \neq j$ be another mutable vertex in $Q$.
If $k$ is red or green in $Q$ then it is also red or green in $\mu_j(Q)$.
\end{lem}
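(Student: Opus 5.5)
We may assume $k$ is green, since everything is invariant under global reversal of arrows; so $k \rightarrow u$ for every frozen vertex $u$. The plan is to examine each $3$-vertex subquiver $Q|_{jku}$ with $u$ frozen, and to use only two facts. First, mutation at $j$ changes $b_{ku}$ only through $2$-paths passing through $j$. Second, each subquiver $Q|_{[n]\cup u}$ is a fork with point of return $r \neq j$.

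\textbf{Step 1 (the easy case $k \rightarrow j$).} If $j \rightarrow k$ is absent, then either $j$ and $k$ are not adjacent, or $k \rightarrow j$.
\begin{itemize}
\item If they are not adjacent, or if $k \rightarrow j$ and $u \rightarrow j$, there is no $2$-path between $k$ and $u$ through $j$, so $b_{ku}$ is unchanged.
\item If $k \rightarrow j \rightarrow u$, the mutation adds $b_{kj}b_{ju}$ arrows $k \rightarrow u$. Hence $k \rightarrow u$ persists.
\end{itemize}
So $k$ can lose its green orientation toward $u$ only if $u \rightarrow j \rightarrow k$.

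\textbf{Step 2 (locating the dangerous case).} Suppose $u \rightarrow j \rightarrow k$. Together with $k \rightarrow u$, the subquiver $Q|_{jku}$ is an oriented $3$-cycle inside the fork $Q|_{[n]\cup u}$. By Proposition~\ref{prop: cycle preserving is like a por}, every oriented $3$-cycle of a fork passes through its point of return. Here this applies to $Q|_{[n]\cup u}$, which is vortex-free by Proposition~\ref{lem:forks are vortex free} and is obtained from $\mu_r$ of itself by the cycle-preserving mutation at $r$, using Lemma~\ref{lem:forks preserve} and the Tree Lemma. Since $j \neq r$ and $u$ is frozen, this forces $k = r$. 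So for $k \neq r$, Steps 1 and 2 already show that $k$ stays green.

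\textbf{Step 3 (the case $k = r$, the main obstacle).} Now $r$ is green, so $u \in Q^+(r)$ for every frozen $u$. Since we are in the case $j \rightarrow r$, we have $j \in Q^-(r)$. The fork inequality for $Q|_{[n]\cup u}$ then gives $b_{uj} > 0$, with $b_{uj} > b_{ru}$ and $b_{uj} > b_{jr}$, and this holds for \emph{every} frozen $u$ simultaneously. After mutation at $j$ we get
\[
b'_{ru} = b_{ru} - b_{uj}b_{jr} < b_{ru} - b_{ru} = 0 .
\]
Hence every arrow between $r$ and a frozen vertex is reversed, and $r$ becomes red in $\mu_j(Q)$. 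The delicate point is exactly this uniformity. A single frozen vertex could flip while others stay, and that would break sign coherence. The fork inequality rules this out because it forces $u \rightarrow j$ for all $u \in Q^+(r)$, and hence forces the flip for all of them. Combining the three steps proves the lemma.
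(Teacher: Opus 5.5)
Your argument is correct, but it is organized differently from the paper's. The paper splits on whether $j$ is sign-coherent. If it is, the paper defers to \Cref{lem: skip colored}, which needs $j$ to be cycle-preserving (a consequence of the Tree Lemma). If it is not, the paper fixes $u \rightarrow j \rightarrow v$ and $j \rightarrow k$, notes that a red $k$ trivially stays red, and rules out a green $k$: the cycle $u \rightarrow j \rightarrow k \rightarrow u$ forces $k=r$, and then the fork inequality in $Q|_{[n]\cup v}$ would force $v \rightarrow j$, contradicting $j \rightarrow v$.

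You never split on the color of $j$. Working one frozen vertex at a time, you isolate $u \rightarrow j \rightarrow k$ as the only configuration that can reverse $b_{ku}$, and you force $k=r$ the same way. You then use the same fork inequality ($u \in Q^+(r)$ and $j \in Q^-(r)$ give $b_{uj} > b_{ru}$) constructively rather than for a contradiction. It shows $j$ must be red and gives $b'_{ru} = b_{ru} - b_{uj}b_{jr} < 0$ uniformly in $u$.

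Your route is self-contained: it uses neither cycle-preservation nor \Cref{lem: skip colored}. It also gives a sharper conclusion: only the point of return can change color, and when it does, all of its frozen arrows flip at once. The paper's route is shorter given the earlier lemma, and it stays inside the cycle-preserving framework that the paper reuses for brog quivers.

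One justification should be replaced. That every oriented $3$-cycle of a fork $F$ passes through $r$ follows directly from the definition. By abundance, every other vertex lies in $F^+(r)$ or $F^-(r)$, and both are acyclic. For $x \in F^+(r)$ and $y \in F^-(r)$, the inequality $b_{xy} > b_{rx} > 0$ forces every arrow between the two sets to point from $F^+(r)$ to $F^-(r)$, so a $3$-cycle avoiding $r$ cannot close.

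Deducing this from \Cref{prop: cycle preserving is like a por} does not work as stated. That proposition assumes the quiver being mutated, here $\mu_r(F)$, is complete, and this can fail. For the fork $r \stackrel{2}{\rightarrow} a \stackrel{6}{\rightarrow} b \stackrel{3}{\rightarrow} r$, mutating at $r$ removes the arrow between $a$ and $b$. You would also need \Cref{prop:apex preserves} to know that $r$ is not an apex in $\mu_r(F)$.
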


\begin{proof}
If $j$ is sign-coherent then the claim follows from \Cref{lem: skip colored}. So we may assume $j$ is not red nor green; let $u,v$ be two frozen vertices with $u \rightarrow j \rightarrow v$.
Without loss of generality, assume $j\rightarrow k$ in $Q$.

If $k$ is red we have the following orientations in $Q$:

%\begin{figure}
\begin{center}
    
\begin{tikzpicture}[->, >={Stealth[round]}, node distance=3cm and 4cm, main/.style = {circle, draw, fill=none, minimum size=6mm, inner sep=0pt}, every edge/.append style={thick}, square/.style={regular polygon,regular polygon sides=4}]

\node[main] (j) {$j$};
\node[draw, square, inner sep=.8mm] (v) [right=2cm of j] {$v$};
\node[main] (k) [below=2cm of v] {$k$};
\node[draw, square, inner sep=.8mm] (u) [below=2cm of j] {$u$};

\path 
    (u) edge (j)
    (j) edge (v)
    (j) edge (k)
    (u) edge (k)
    (v) edge (k);
\end{tikzpicture}
    %\label{fig:orientations with j uncolored}
%\end{figure}
\end{center}
We can immediately see that $k$ is still red in $\mu_j(Q)$.

We claim that $k$ cannot be green (recall that we are assuming $j\rightarrow k$).  
If $k \rightarrow u$, then $Q|_{jku}$ is cyclically oriented. 
Note that $u$ is frozen and $j$ is assumed not to be the point of return, so $k$ is the point of return in $Q$.
But this contradicts the assumption that $Q$ is an ice fork, as we have $j \rightarrow k \rightarrow v \leftarrow j$ (in particular, $b_{vj} < 0 < b_{kv}$).
\end{proof}

In other words, \Cref{lem: colors stay} tells us that the set of red or green vertices can only grow when we are mutating ice forks.

\begin{lem}
\label{lem:mutation makes more color}
Let $Q$ be a complete quiver with at least two frozen vertices.
Fix two distinct mutable vertices $j,k$.
Suppose $j$ is cycle-preserving for $Q$, and $k$ is cycle-preserving for $\mu_j(Q)$.
If $j$ is neither red nor green, then $j$ is red or green in $\mu_k(\mu_j(Q))$.
\end{lem}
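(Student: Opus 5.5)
The plan is to work in $Q' = \mu_j(Q)$ and analyze the $3$-vertex subquivers $\{j,k,w\}$ for each frozen vertex $w$, using cycle preservation at each of the two steps to pin down orientations. First note that mutation at $j$ reverses every arrow incident to $j$. So the arrow between $j$ and $k$ is nonzero in $Q'$, since $Q$ is complete. Also, $j$ is still neither red nor green in $Q'$; this uses the hypothesis that there are at least two frozen vertices.

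Using global reversal of arrows (which commutes with mutation and swaps red and green), we may assume $k \rightarrow j$ in $Q'$, i.e.\ $j \rightarrow k$ in $Q$. The goal is then to show that $j$ is \emph{red} in $\mu_k(Q')$, i.e.\ every frozen $w$ adjacent to $j$ satisfies $w \rightarrow j$ there. Mutation at $k$ only modifies $b_{wj}$ through $2$-paths passing through $k$. Since $k \rightarrow j$, the only such path is $w \rightarrow k \rightarrow j$, which adds arrows $w \rightarrow j$. Hence every frozen $w$ with $w \rightarrow j$ in $Q'$ keeps that orientation in $\mu_k(Q')$, and it remains to treat the frozen $w$ with $j \rightarrow w$ in $Q'$.

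For such a $w$, the key step is to show that $w \rightarrow k$ in $Q'$. In $Q$ we have $j \rightarrow k$ and $w \rightarrow j$, and by completeness there is an arrow between $k$ and $w$.
\begin{itemize}
\item If $k \rightarrow w$ in $Q$, then $Q|_{jkw}$ is the oriented cycle $j \rightarrow k \rightarrow w \rightarrow j$. Since $j$ is cycle-preserving, $Q'|_{jkw}$ is again an oriented cycle. With $k \rightarrow j \rightarrow w$ this forces $w \rightarrow k$ in $Q'$.
\item If $w \rightarrow k$ in $Q$, then $j$ is the elbow of $w \rightarrow j \rightarrow k$. Mutation at $j$ only adds to the arrows $w \rightarrow k$, so $w \rightarrow k$ in $Q'$.
\end{itemize}
Either way $Q'|_{jkw}$ is the oriented cycle $w \rightarrow k \rightarrow j \rightarrow w$. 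Because $k$ is cycle-preserving for $Q'$, the subquiver $\mu_k(Q')|_{jkw}$ is still an oriented cycle. Its arrows at $k$ are reversed to $j \rightarrow k \rightarrow w$, so closing the cycle forces $w \rightarrow j$.

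Combining both cases, every frozen vertex points into $j$ in $\mu_k(\mu_j(Q))$, so $j$ is red there (or green, in the globally reversed case). Nonzero-ness of $c_j$ is automatic, because the frozen vertices with $w \rightarrow j$ in $Q'$ persist.

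The main obstacle is the middle step: ruling out $k \rightarrow w$ in $Q'$. In that configuration $k$ would be a source of an acyclic triple, and mutation at $k$ would leave $j \rightarrow w$ intact. It is exactly here that the cycle-preservation of $j$ for $Q$, and not just that of $k$, must be used, together with completeness of $Q$ to guarantee that the arrow between $k$ and $w$ exists.
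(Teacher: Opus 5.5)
Your proof is correct and is essentially the paper's argument: assume $j \rightarrow k$ in $Q$. In $\mu_j(Q)$, each frozen $w$ with $w \rightarrow j$ in $Q$ lies on an oriented cycle $w \rightarrow k \rightarrow j \rightarrow w$, so cycle-preservation of $k$ forces $w \rightarrow j$ afterwards, while the remaining frozen vertices already point into $j$ and keep that orientation, so $j$ becomes red. The paper only draws the orientations of $\mu_j(Q)|_{jkuv}$ for a single pair $u,v$; your case analysis justifying $w \rightarrow k$ in $\mu_j(Q)$ (via cycle-preservation of $j$ or the elbow case), and your handling of all frozen vertices at once, fill in details the paper leaves implicit.
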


\begin{proof}
Without loss of generality, assume $j \rightarrow k$ in $Q$.
Let $u,v$ be two frozen vertices so that $u \rightarrow j \rightarrow v$ in $Q$. 
We know the following orientations of $\mu_j(Q)|_{jkuv}$: 

\centering
\begin{tikzpicture}[->, >={Stealth[round]}, node distance=3cm and 4cm, main/.style = {circle, draw, fill=none, minimum size=6mm, inner sep=0pt}, every edge/.append style={thick}, square/.style={regular polygon,regular polygon sides=4}]

\node[main] (j) {$j$};
\node[draw, square, inner sep=.8mm] (v) [right=2cm of j] {$v$};
\node[main] (k) [below=2cm of v] {$k$};
\node[draw, square, inner sep=.8mm] (u) [below=2cm of j] {$u$};

\path 
    (j) edge (u)
    (v) edge (j)
    (k) edge (j)
    (u) edge (k)
    (k) edge[dashed, -] (v);
\end{tikzpicture}

Regardless of the unknown orientation, vertex $j$ is red in $\mu_k(\mu_j(R))$.
\end{proof}

\begin{proof}[{(Proof of \Cref{thm:Asym Sign Coherence})}]
%By \cite[Lemma 6.8]{LMC}, because $\mathbf M$ is an ascent sequence we know $Q^{(1)}$ is an ice fork. 
By \Cref{cor: cycle-preserving tree lem}, each mutation in $\mathbf M$ is cycle-preserving.
By the definition of a fork (and the choice of $\mathbf M$), $Q_{\mathbf M}^{(i)}$ is complete for all $i$.
By Lemmas~\ref{lem: skip colored} and~\ref{lem: colors stay}, if a vertex is red or green in $Q_{\mathbf M}^{(i)}$ then it is red or green in all $Q_{\mathbf M}^{(j)}$ for $j>i$. 
By \Cref{lem:mutation makes more color}, vertex $m_i$ will be red or green in $Q_{\mathbf M}^{(i+1)}$. 
By assumption on $\mathbf M$, there is an $\ell$ such that $[n]\subseteq m_1\cdots m_{\ell}$.
It follows that $Q_{\mathbf M}^{(i)}$ is sign-coherent for $i \geq \ell+1$.
\end{proof}

\begin{rem}
We note that the proof of Theorem~\ref{thm:Asym Sign Coherence} gives a precise bound on the number of mutations required to become sign-coherent in terms of the number of vertices that are initially sign-coherent.
\end{rem}

\begin{exmp}
Let $Q$ be the fork shown in Figure~\ref{fig: fork sign coherent}, and consider the mutation sequence $\mathbf M = \overline{123}$.
In $Q^{(0)}_{\mathbf M}$, vertex $3$ is not red or green, while vertices $1$ and $2$ are.
In $Q^{(1)}_{\mathbf M} = \mu_1(Q)$, vertex $3$ is red, and so all of the vertices are red or green. All vertices will remain red or green from this point forward.
\end{exmp}

\begin{exmp}
Let $Q$ be the fork shown in Figure~\ref{fig: another sign coherent fork}, and consider the mutation sequence $\mathbf M = 3\overline{12}$.
In $Q^{(0)}_{\mathbf M}$, no vertex is red or green.
In $Q^{(1)}_{\mathbf M} = \mu_3(Q)$, vertex $1$ is red, while vertex $3$ and $2$ are not.
In $Q^{(2)}_{\mathbf M} = \mu_1(Q^{(1)}_{\mathbf M})$, vertices $1$ and $3$ are green or red, while vertex $2$ remains neither.
While the situation is unchanged in $Q^{(3)}_{\mathbf M} = \mu_2(Q^{(2)}_{\mathbf M})$, vertex $2$ is green in $Q^{(4)}_{\mathbf M}$ and $Q^{(j)}_{\mathbf M}$ is sign-coherent for $j\geq 4$.
\end{exmp}

\begin{figure}
\centering
\begin{tikzpicture}[->, >={Stealth[round]}, node distance=3cm and 4cm, main/.style = {circle, draw, fill=none, minimum size=6mm, inner sep=0pt}, every edge/.append style={thick}, square/.style={regular polygon,regular polygon sides=4}]

\filldraw[black] (0,0)++(180:1.7cm) node[main, fill = white] (1) {\textcolor{black}{$\boldsymbol{1}$}};
\filldraw[black] (0,0)++(60:2cm) node[main, fill = white] (2) {\textcolor{black}{$\boldsymbol{2}$}};
\filldraw[black] (0,0)++(-60:2cm) node[main, fill = white] (3) {\textcolor{black}{$\boldsymbol{3}$}};
\filldraw[black] (0,0) node[main, fill = white, square] (u) {\textcolor{black}{$\boldsymbol{u}$}};
\filldraw[black] (0,0)++(180:4cm) node[main, fill = white, square] (v) {\textcolor{black}{$\boldsymbol{v}$}};

\path
 (1) edge["$5$", outer sep=-3, pos=0.2] (2)
 (2) edge["$13$", outer sep=-2, pos=0.5] (3)
 (3) edge["$3$"', outer sep=-2, pos=0.5] (1)
 (v) edge["$20$"', outer sep=-2, pos=0.5] (3)
 (1) edge["$7$", outer sep=-2, pos=0.2] (v)
 (v) edge["$2$", outer sep=-2, pos=0.5] (2)
 (3) edge["$2$"', outer sep=-2, pos=0.6] (u)
 (2) edge["$9$"', outer sep=-2, pos=0.7] (u)
 (u) edge["$4$"', outer sep=-2, pos=0.3] (1);
\end{tikzpicture}
\caption{An ice fork with point of return $1$.}
\label{fig: another sign coherent fork}
\end{figure}

\section{Almost always eventual sign coherence}
\label{sec:wander}

We now show that Gekhtman and Nakanishi's \emph{asymptotic sign coherence conjecture} holds almost always for quivers with frozen vertices.  That is, we show that for any mutation-infinite quiver $Q$ with frozen vertices, almost all (weakly) balanced monotone mutation sequences applied to $Q$ eventually become sign-coherent (\Cref{thm: geq 3 eventually sc}). We achieve this by combining the results of the previous section with Warkentin's work showing that most vertices in the exchange graph are forks.

Warkentin showed there is a uniform bound (depending only on rank) on the number of mutations needed to turn any mutation-infinite quiver into a fork.

\begin{lemma}[{\cite[Theorem 5.1]{Warkentin}}]\label{lem: fork in finite mutations}
Let $\Gamma$ be the exchange graph of a mutation-infinite connected quiver of rank $n$.  Then there is a bound $D(n) = O(n^3)$ depending only on $n$ such that any vertex of $\Gamma$ has at most distance $D(n)$ to a fork.    
\end{lemma}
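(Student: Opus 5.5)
This result is cited from Warkentin (\cite[Theorem 5.1]{Warkentin}), so the plan is to reconstruct his argument in outline rather than derive it from the results of this paper.

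Let $Q'$ be an arbitrary vertex of $\Gamma$. The aim is a mutation sequence of length $O(n^3)$ that makes $Q'$ abundant and non-acyclic, and then pushes it into a fork.

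\textbf{Step 1 (abundance).} Since the class is mutation-infinite and connected, every rank-$3$ connected subquiver that does not lie in a finite or affine type can be made abundant in a bounded number of moves. Rank $2$ mutation-infinite classes, i.e.\ those with weight $\ge 3$, are already abundant. The plan is to proceed vertex by vertex. Take a spanning structure of the mutable part and inductively enlarge a set $S$ of vertices on which the subquiver is abundant. To add a new vertex $v$ adjacent to $S$, use a bounded number of mutations at $v$ and at vertices of $S$, which creates at least $2$ arrows from $v$ to every vertex of $S$. The key fact here is that mutating a vertex $j$ with large arrows to $i$ and $k$ creates an arrow $i\to k$ of weight at least $b_{ij}b_{jk}-|b_{ik}|$. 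Each insertion should cost $O(n^2)$ mutations, because we may need to repair every pair in $S$. Over $n$ vertices this gives the $O(n^3)$ total.

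\textbf{Step 2 (reaching a fork).} Once $Q''$ is abundant, consider its maximal weight arrow $i\to k$ and a third vertex $j$. The idea is that in an abundant quiver a vertex $r$ whose incident arrows are all smaller than the arrows across $Q^+(r)\times Q^-(r)$ can be manufactured as follows. Mutate at a sink or source of the acyclic part, or at a vertex of a cycle, so that the newly created arrows dominate. After one mutation at a suitable $r$, the arrows between $Q^+(r)$ and $Q^-(r)$ become products, which exceed the weights of the arrows at $r$. One still has to check acyclicity of $Q^\pm(r)$. If that fails, iterate, using Lemma~\ref{lem:tree lemma}-style monotonicity of the maximal weight to guarantee termination in $O(n)$ additional steps.

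\textbf{Main obstacle.} I expect Step 1 to be the hardest part, specifically proving that bounded mutation makes a new pair abundant without excluding mutation-finite pieces. The trouble is that subquivers may be of finite type, such as $A_2$ pieces. Global mutation-infiniteness has to be leveraged through a subquiver that is mutation-infinite, which is where connectivity is used. My first attempt would be to locate a rank-$3$ mutation-infinite subquiver in $O(n)$ steps and then propagate large weights outward from it.
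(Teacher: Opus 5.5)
There is a genuine gap, and it is structural. Abundance is the wrong inductive invariant because mutation does not preserve it. Nothing in your Step~1 stops the ``bounded number of mutations at $v$ and at vertices of $S$'' from destroying what you have already built. Concretely, the oriented $3$-cycle with $b_{12}=b_{23}=2$ and $b_{31}=4$ is abundant and mutation-infinite. Yet mutating at $2$ gives $b_{13}=-4+2\cdot 2=0$, so the result is not even complete. Your argument has no monotone quantity, and the $O(n^2)$ cost per insertion is asserted rather than derived.

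The missing idea is to carry a \emph{fork} through the induction instead. That $3$-cycle is a fork with point of return $2$. By \Cref{lem:tree lemma}, a fork stays a fork under every reduced mutation sequence that does not start at its point of return. This stability is what lets you mutate inside the part already built, to grow the arrows to a new vertex, without losing that part. The paper quotes the result from Warkentin without proof. Its use of his Corollary~3.10 in \Cref{prop: add frozen to ice fork} and \Cref{lem: ice fork in finite mutations} shows the shape of his extension step. When deleting one vertex $k$ leaves a fork, boundedly many mutations turn the whole quiver into a fork (at most $\frac{1}{3}(n+1)^3$ in the form quoted there).

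Step~2 also fails as stated. Abundance plus non-acyclicity cannot by itself lead to a fork: the Markov quiver is abundant and an oriented cycle, but it is never a fork. So mutation-infiniteness must be used in Step~2, and your argument does not use it there. You claim that after mutating at $r$ the new arrows from $Q^-(r)$ to $Q^+(r)$ are products dominating the arrows at $r$. This holds only when no arrow of $Q$ runs from $Q^+(r)$ back to $Q^-(r)$, for instance when $Q$ is abundant and acyclic and $r$ is neither a source nor a sink. Otherwise the opposing weight offsets the product, which is exactly what produced the $0$ in the example above. The ``monotonicity of the maximal weight'' you invoke is only available once you are already inside a tree of forks, which is what you are trying to reach.

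Finally, the obstacle you flag is a real gap, not a technicality. An orientation of the $E_{10}$ tree $T_{2,3,7}$ is mutation-infinite, has all weights $1$, and has every proper subquiver mutation-finite. So there is no mutation-infinite rank-$3$ subquiver to locate. You need an argument that creates an arrow of weight at least $3$ within a number of mutations bounded in terms of $n$ alone. Your ``$O(n)$ steps'' is a guess, not such an argument. From such an arrow and a neighbor, a three-vertex fork is reached in boundedly many mutations, and that fork is then enlarged one vertex at a time.
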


Using a slight modification of Warkentin's approach, we show an analogous result holds for ice forks.  

\begin{prop}\label{prop: add frozen to ice fork}
Let $Q$ be a rank $n \geq 3$ quiver with vertices $V$, such that $Q|_{V \cut \{u\}}$ is an ice fork for some frozen vertex $u$.
Then there is an ice fork at most $\frac{2}{3}(n+1)^3 + 2$ mutations from $Q$. 
\end{prop}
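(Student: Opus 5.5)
We treat the quiver $Q' := Q|_{[n]\cup\{u\}}$, the mutable part together with the single new frozen vertex $u$. This is a rank $n$ quiver with one frozen vertex, so we would like to apply Warkentin's machinery to it. The plan is to find a mutation sequence $\mathbf M$ that turns $Q'$ into a fork, and to choose it so that its mutations are harmless for the subquiver $Q|_{V\setminus\{u\}}$. Then, at the end, every subquiver $Q_{\mathbf M}|_{[n]\cup\{u_i\}}$ should be a fork with a common point of return.

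\textbf{Step 1: mutate within the tree.} Since $R := Q|_{V\setminus\{u\}}$ is an ice fork with point of return $r$, the Tree Lemma (\Cref{lem:tree lemma}) applies to each fork $R|_{[n]\cup\{u_i\}}$. Hence any reduced sequence $\mathbf M$ with $m_1\neq r$ keeps each of these subquivers a fork. The point of return at each step is the last vertex mutated; this is what the remark after the Tree Lemma says about mutating back toward the root. So $R_{\mathbf M}$ stays an ice fork along any such path, with point of return $m_{|\mathbf M|}$ once $|\mathbf M|\geq 1$. The remaining task is to choose $\mathbf M$, avoiding $r$ at the first step and reduced thereafter, so that $Q'_{\mathbf M}$ is also a fork with point of return $m_{|\mathbf M|}$.

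\textbf{Step 2: adapt Warkentin's argument to $u$.} Warkentin's proof of \Cref{lem: fork in finite mutations} grows weights by mutating so that a chosen vertex becomes the point of return and the inequalities in \Cref{def:fork} become strict. These inequalities are $b_{ij}>b_{ri}$ and $b_{ij}>b_{jr}$, together with abundance and acyclicity of $F^\pm(r)$. Here the mutable part is already a fork, and its weights grow, roughly doubling, along reduced paths in the tree. The only inequalities that can fail in $Q'_{\mathbf M}$ involve $u$. They are (a) abundance of the arrows between $u$ and the mutable vertices; (b) the inequalities where $i$ or $j$ equals $u$, comparing $|b_{u k}|$ with $|b_{rk}|$ and $|b_{ru}|$; and (c) acyclicity of $F^\pm(r)$ once $u$ is added.

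The key tool is \Cref{lem: skip colored} and \Cref{lem:mutation makes more color}. Consider the single frozen vertex $u$, so that ``red/green'' here is just a sign. Once $u$ sits in a 3-cycle with the mutated vertex and the mutation is cycle-preserving, the weight $|b_{ku}|$ picks up a product term $|b_{kj}||b_{ju}|$ and grows. My plan is to mutate each mutable vertex in a round-robin order that is reduced and avoids $r$ first, for example cycling through $[n]$ starting with a vertex other than $r$. I would then track the vector $(|b_{ku}|)_k$ and show that after $O(n)$ rounds its entries dominate the required mutable weights. The orientations of arrows at $u$ should stabilize into a pattern compatible with the current point of return; the proof of \Cref{lem:mutation makes more color} suggests that $u$ ends up in a 3-cycle with every pair of consecutively mutated vertices.

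\textbf{Main obstacle.} The hardest part is quantitative: making the weights at $u$ beat the mutable weights. Those mutable weights also grow exponentially, while the weights at $u$ may start at $0$ or be small, so the inequalities in (b) could fail indefinitely. One might try to get the cubic bound by counting: $O(n)$ mutations per ``repair'' of one vertex's relationship with $u$, over $O(n^2)$ ordered pairs. This should give roughly $\tfrac23(n+1)^3$, with the $+2$ accounting for first leaving $r$ and then setting the final point of return.

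If the round-robin approach fails, the fallback is to apply Warkentin's construction behind \Cref{lem: fork in finite mutations} directly to the rank $n+1$ quiver in which $u$ is treated as mutable. That yields the $(n+1)$ in the bound. We would then check that the resulting sequence never actually needs to mutate $u$, or can be arranged to avoid it; this is the genuinely delicate step and where I would focus effort. Finally, we would verify that the other frozen $u_i$ remain fork-compatible by Step 1.
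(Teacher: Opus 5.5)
\textbf{There is a genuine gap.} The decisive step is never established: a bounded-length sequence of mutations at mutable vertices that turns $Q|_{[n]\cup\{u\}}$ into a fork. Your round-robin plan is not argued, and the lemmas you cite cannot supply it. With the single frozen vertex $u$, \Cref{lem: skip colored} only records that vertices adjacent to $u$ stay adjacent with a definite orientation. \Cref{lem:mutation makes more color} requires two frozen vertices. Neither controls the magnitudes $|b_{ku}|$ against $|b_{rk}|$ that the inequalities of \Cref{def:fork} need. As you note yourself, those comparisons may fail indefinitely, since the mutable weights grow too. Your cubic count is a heuristic, not a derivation of $\frac23(n+1)^3+2$. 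Your fallback is the right idea, but it is not a delicate step to re-prove. It is exactly what the paper takes from \cite[Corollary 3.10]{Warkentin} with $k=u$: a sequence $\mathbf M_2$ of at most $\frac13(n+1)^3$ mutations at mutable vertices making $Q|_{[n]\cup\{u\}}$ an ice fork. That is where the $(n+1)$ comes from.

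Even granting that citation, your Step 1 does not mesh with it. Step 1 only keeps $R=Q|_{V\setminus\{u\}}$ an ice fork along reduced sequences that start away from the current point of return. But $\mathbf M_2$ is not under your control. It may begin at the point of return of $R$, walk back up the tree of \Cref{lem:tree lemma}, and cross the deleted edge, after which $R$ need not be an ice fork.

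The missing idea is a depth buffer. First apply any reduced sequence $\mathbf M_1$ of length $\frac13(n+1)^3+1$ whose first mutation is not $r$. Then $R_{\mathbf M_1}$ sits so deep in the tree that every quiver within $\frac13(n+1)^3$ mutations of it is still an ice fork. So applying $\mathbf M_2$ afterwards, reduced or not, keeps $R$ an ice fork.

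Finally, $(Q|_{V\setminus\{u\}})_{\mathbf M_1\mathbf M_2}$ and $(Q|_{[n]\cup\{u\}})_{\mathbf M_1\mathbf M_2}$ are ice forks whose points of return may differ. You need one more mutation at a vertex distinct from both. This makes every $Q|_{[n]\cup\{u_i\}}$ a fork with that vertex as common point of return. Such a vertex exists precisely because $n\geq 3$ (compare \Cref{eg: ice forks in rank 2}), and your proposal never uses this hypothesis. The bound is then $(\frac13(n+1)^3+1)+\frac13(n+1)^3+1=\frac23(n+1)^3+2$.
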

\begin{proof}
First, perform a reduced mutation sequence $\mathbf M_1$ of $|\mathbf M_1| = \frac{1}{3}(n+1)^3 + 1$ mutations to $Q$ whose first mutation is not at the point of return of $Q|_{V\cut \{u\}}$.  
By \Cref{lem:tree lemma}, all quivers of mutation distance at most $\frac{1}{3}(n+1)^3$ from $(Q|_{V \cut \{u\}})_{\mathbf M_1}$ are ice forks.  
Then by \cite[Corollary 3.10]{Warkentin} (setting $k = u$), there is a mutation sequence $\mathbf M_2$ of at most $\frac{1}{3}(n+1)^3$ mutations such that $(Q|_{[n] \cup \{u\}})_{\mathbf{M}_1 \mathbf M_2}$ is an ice fork.  Moreover, since $|\mathbf M_2| \leq |\mathbf M_1|$, $(Q|_{V \cut \{u\}})_{\mathbf{M}_1 \mathbf{M}_2}$ is still an ice fork (where we've used $\mathbf{M}_1 \mathbf{M}_2$ for the concatenation of the two mutation sequences, first performing $\mathbf{M}_1$ and then $\mathbf{M}_2$).  
Then perform a mutation at a vertex $r$ that is not the point of return of either $(Q|_{V \cut \{u\}})_{\mathbf{M}_1 \mathbf{M}_2}$ or $(Q|_{[n] \cup \{u\}})_{\mathbf{M}_1 \mathbf{M}_2}$. The resulting quiver $Q_{\mathbf{M}_1\mathbf{M}_2r}$ is an ice fork (with point of return $r$), as desired.
\end{proof}

\begin{rem}
The rank $n \geq 3$ condition in \Cref{prop: add frozen to ice fork} is necessary. See \Cref{eg: ice forks in rank 2}.
\end{rem}

\begin{lemma}
\label{lem: ice fork in finite mutations}
Let $\Gamma$ be the exchange graph of  connected quiver $Q$ of rank $n$ with $m$ frozen vertices such that $Q^{\mut}$ is mutation infinite.  Then there is a bound $D(m,n)$ depending only on $n$ and $m$ such that any vertex of $\Gamma$ is at most $D(m,n)$ mutations from an ice fork.  
\end{lemma}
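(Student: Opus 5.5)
The plan is to induct on the number of frozen vertices $m$, adding them one at a time with \Cref{prop: add frozen to ice fork}. The base case uses \Cref{lem: fork in finite mutations}. The case $n\le 2$ needs separate treatment; I would handle it by a direct computation or note it as degenerate, since a mutation-infinite rank $2$ quiver is a Kronecker-type pair and the relevant examples are treated separately, cf. \Cref{eg: ice forks in rank 2}. So assume $n \geq 3$.

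\textbf{Base case $m=1$.} Let $Q'$ be any vertex of $\Gamma$ with frozen vertex $u_1$. Since $Q'^{\mut}$ is mutation-infinite and connected, \Cref{lem: fork in finite mutations} gives a sequence $\mathbf M$ of length at most $D(n)$ such that $(Q'^{\mut})_{\mathbf M}$ is a fork. The frozen vertex is not yet controlled, so I apply Warkentin's frozen-vertex extension argument. This is \cite[Corollary 3.10]{Warkentin}, used exactly as in the proof of \Cref{prop: add frozen to ice fork} with the empty set of previous frozens. First move $\frac13(n+1)^3+1$ steps down the tree of \Cref{lem:tree lemma}. Then use at most $\frac13(n+1)^3$ further mutations to make $Q'|_{[n]\cup u_1}$ a fork. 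A final mutation away from both points of return makes the point of return common. This gives $D(1,n)\le D(n)+\frac23(n+1)^3+2$.

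\textbf{Inductive step.} Suppose $D(m-1,n)$ exists. Given $Q'$ with frozens $u_1,\dots,u_m$, delete $u_m$. This is a vertex of the exchange graph of $Q|_{V\cut u_m}$, whose mutable part is still mutation-infinite. I would note that connectedness may fail if $u_m$ was the only neighbor of some frozen vertex, but frozen vertices do not affect mutation of the mutable part, so isolated frozens can be set aside. The induction hypothesis then gives a sequence of length at most $D(m-1,n)$ after which $Q'|_{V\cut u_m}$ is an ice fork. Since deleting a frozen vertex commutes with mutation, applying the same sequence to $Q'$ produces a quiver satisfying the hypothesis of \Cref{prop: add frozen to ice fork}. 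That proposition then yields an ice fork within $\frac23(n+1)^3+2$ more steps. Hence $D(m,n)\le D(n)+m\bigl(\frac23(n+1)^3+2\bigr)$.

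\textbf{Main obstacle.} The substantive difficulty is not the induction but making sure Warkentin's Corollary 3.10 and \Cref{prop: add frozen to ice fork} apply in the needed generality. In particular, I have to check the base case with a single frozen vertex and handle frozen vertices that become isolated or disconnected; there I would simply add or remove arrows that do not affect the fork inequalities. I would also need to check the treatment of low rank.
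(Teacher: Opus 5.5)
Your proposal is correct and follows essentially the same route as the paper: reduce $Q^{\mut}$ to a fork via \Cref{lem: fork in finite mutations}, settle $m=1$ with Warkentin's Corollary 3.10, and add the frozen vertices one at a time using \Cref{prop: add frozen to ice fork}; your extra tree descent and final mutation in the base case are harmless but unnecessary, since for $m=1$ an ice fork is just a fork $Q|_{[n]\cup u_1}$. Two of your worries are moot: the case $n\le 2$ is vacuous rather than degenerate, because a rank-$2$ quiver without frozen vertices has only two quivers in its mutation class, so $Q^{\mut}$ being mutation-infinite forces $n\ge 3$; and since arrows between frozen vertices are ignored, deleting $u_m$ cannot make any other frozen vertex isolated.
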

\begin{proof}
By \Cref{lem: fork in finite mutations}, we can mutate $Q^{\mut}$ to a fork with at most $D(n)$ mutations. 
So we may assume $Q^{\mut}$ is a fork.
We argue by induction on the number of frozen vertices $m$. 
When $m=1$, the claim follows from \cite[Corollary 3.10]{Warkentin} (setting $k = u_1$): there is a mutation sequence of at most $\frac{1}{3}(n+1)^3$ mutations taking any quiver with a single frozen vertex to an ice fork.
Now suppose the claim is true for $m$ frozen vertices. Let $V$ be the set of vertices of $Q$ and pick a frozen vertex $u$.
By the inductive hypothesis, we can mutate $Q|_{V \cut \{u\}}$ to an ice fork in $D(m,n)$ mutations.
By \Cref{prop: add frozen to ice fork}, we can thus mutate $Q$ to an ice fork in $D(m+1,n) = D(m,n) + \frac 1 3 (n+1)^3 + 2$ mutations.
\end{proof}

\begin{rem}
Unwrapping the induction in \Cref{lem: ice fork in finite mutations}, we find that $$D(m,n) = D(n) + \frac{1}{3}(n+1)^3 + (m-1)\left(\frac{1}{3}(n+1)^3 + 2\right).$$ We will not need the precise value.
\end{rem}

We have the following analogs of \cite[Propositions~5.2, 5.4]{Warkentin}. 
Both are immediate consequences of \Cref{lem: ice fork in finite mutations}.
\begin{prop}
\label{prop: ice forks get wandered into}
Suppose $Q$ is a quiver with at least one frozen vertex and $Q^{\mut}$ is mutation infinite.
A simple random walk on the mutation graph of $Q$ will almost certainly leave the `ice-forkless' part and never return.
%Form an infinite mutation sequence $\mathbf{M} = m_1m_2\cdots$ by choosing $m_i$ uniformly from the mutable vertices of $Q$.
%Then with probability one, there is some $i$ such that $Q^{(j)}_{\mathbf {M}}$ is an ice fork for $j>i$.
\end{prop}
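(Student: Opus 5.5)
The plan is to combine the uniform distance bound of \Cref{lem: ice fork in finite mutations} with the tree structure around ice forks given by \Cref{lem:tree lemma}. The relevant random object is the infinite mutation sequence $\mathbf M = m_1 m_2 \cdots$ with each $m_i$ drawn independently and uniformly from $[n]$; this is the simple random walk on the ($n$-regular) mutation graph $\Gamma$ of $Q$. The `ice-forkless' part is everything outside the union of the infinite $(n-1)$-ary trees rooted at ice forks. To make "never return" precise, I will show that with probability $1$ there is an index $i_0$ such that the walk, after time $i_0$, stays inside one such tree and never again steps back toward the root along the edge to $\mu_r$.

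First I treat the connected case; otherwise I pass to connected components, since frozen vertices attached to separate mutable components behave independently. Set $D = D(m,n)$ from \Cref{lem: ice fork in finite mutations}. From any quiver $Q'$ in $\Gamma$ there is an explicit sequence of at most $D$ mutations reaching an ice fork $F$ with point of return $r$. Following it with $D+1$ further mutations, the first not at $r$ and the rest forming a reduced sequence, lands the walk at depth $D+1$ inside the tree $\Gamma'$ of \Cref{lem:tree lemma}. This specific word of length $L \le 2D+1$ occurs with probability at least $p = n^{-L} > 0$, uniformly in $Q'$. Partitioning time into disjoint blocks of length $L$ and applying Borel--Cantelli (or the geometric bound $(1-p)^k$), such an event occurs almost surely, and in fact infinitely often.

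The main obstacle is showing that after entering a tree the walk does not almost surely climb back out. Inside $\Gamma'$, every vertex other than the root has exactly one neighbor closer to the root (mutation at its point of return, by the remark following \Cref{lem:tree lemma}) and $n-1$ children. The distance to the root is therefore a biased random walk with up-probability $(n-1)/n \ge 2/3$ for $n \ge 3$, as long as it remains positive. Starting from depth $D+1$, the gambler's-ruin estimate shows the walk reaches depth $0$ with probability at most $(1/(n-1))^{D+1} \le 1/2$. So each attempt independently succeeds with probability at least $q = p/2$, in the sense that it enters a tree and never leaves. Each failed attempt ends at a finite stopping time, after which the strong Markov property and the uniform bound from any starting quiver let me restart the argument. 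Hence the probability of failing forever is $0$. Once the walk stays in $\Gamma'$ minus the root, every quiver it visits is an ice fork. This also matches the paper's use: by \Cref{cor: cycle-preserving tree lem} and \Cref{thm:Asym Sign Coherence}, the tail of the sequence is then eventually sign-coherent. Rank $n \le 2$ needs a separate check, but the hypothesis that $Q^{\mut}$ is mutation infinite forces $n \ge 3$ for the ice fork machinery to apply, in line with the rank-$2$ caveat after \Cref{prop: add frozen to ice fork}.
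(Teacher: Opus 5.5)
Your argument is correct and is essentially the paper's. The paper simply calls this an immediate consequence of the uniform bound $D(m,n)$ in \Cref{lem: ice fork in finite mutations} together with the $(n-1)$-ary tree structure of \Cref{lem:tree lemma} (as in Warkentin's Proposition 5.2). You have written out the details behind that claim:
\begin{itemize}
\item a uniformly positive probability of reaching depth $D+1$ in a tree;
\item the gambler's-ruin bound for the biased depth process (your observation that mutation-infiniteness forces $n\ge 3$ is right, since a rank-$2$ quiver without frozen vertices has at most two quivers in its class);
\item a strong-Markov restart.
\end{itemize}

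One correction: drop the reduction to connected components. Ice forks are abundant, and mutation never creates arrows between two mutable components or at an isolated frozen vertex. So for disconnected $Q$ the class contains no ice forks at all, and connectivity must be read as an implicit hypothesis, exactly as in \Cref{lem: ice fork in finite mutations}.
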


\begin{prop}
\label{prop: ice forks are all the class}
Fix a quiver $Q$ with at least one frozen vertex such that $Q^{\mut}$ is mutation infinite.
Let $N(Q,d)$ denote the number of quivers which are mutation distance at most $d$ from $Q$, and let $F(Q,d)$ denote the number of ice forks which are mutation distance at most $d$ from $Q$.
Then 
$$\lim_{d \rightarrow \infty} \frac{F(Q,d)}{N(Q,d)} = 1.$$
\end{prop}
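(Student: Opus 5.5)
The plan is to copy Warkentin's argument for \cite[Proposition~5.4]{Warkentin}, with \Cref{lem: ice fork in finite mutations} in place of \Cref{lem: fork in finite mutations} and \Cref{lem:tree lemma} controlling the growth of balls in the mutation graph $\Gamma$. Reversing arrows at a frozen vertex changes neither mutation nor the ice fork property, so we may reduce to the case where $Q$ is connected; in particular \Cref{lem: ice fork in finite mutations} applies with the constant $D = D(m,n)$. We will use the rank $n \geq 3$ case throughout, which is implicit in this section; see the remark on \Cref{prop: add frozen to ice fork}.

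\textbf{Step 1: every ball contains many ice forks.} Let $B(d)$ denote the ball of radius $d$ around $Q$, so $N(Q,d) = |B(d)|$. Take any quiver $P \in B(d)$ that is not an ice fork. By \Cref{lem: ice fork in finite mutations} there is an ice fork $F$ with point of return $r$ at distance at most $D$ from $P$. The subtree $T_F$ of \Cref{lem:tree lemma}, obtained by deleting the edge from $F$ to $\mu_r(F)$, is an infinite complete $(n-1)$-ary tree rooted at $F$. Its vertices are all ice forks, by the same tree-lemma reasoning used in the proof of \Cref{prop: add frozen to ice fork}.

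\textbf{Step 2: non-ice-forks form a thin layer.} Within $T_F$, the vertices at depth less than $k$ number about $(n-1)^{k-1}$, and they lie in $B(d + D + k)$. The key point is that every non-ice-fork $P$ lies outside all the trees $T_F$. Its geodesic to the nearest ice fork $F$ enters $T_F$ only through the root edge $F$---$\mu_r(F)$. Fix a large parameter $k$. Send each non-ice-fork $P \in B(d)$ to the depth-$k$ part of its tree $T_F$. Each ice fork vertex at depth $k$ in a tree is hit at most $C = (\text{max degree})^{D}$ times. This gives
\[(N(Q,d) - F(Q,d)) \cdot (n-1)^{k-1} \leq C \cdot F(Q, d + D + k).\]
A cleaner route is a cone argument. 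Each non-ice-fork sits within distance $D + 1$ of the root edge of a tree. All the trees are disjoint, and each carries $(n-1)^{j}$ ice forks at depth $j$ but only the single ``entry'' point. Hence the ratio of non-ice-forks to ice forks within radius $d$ is at most $C'/(n-1)^{\,\text{something growing}}$.

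\textbf{Step 3: the main obstacle.} The main obstacle is turning this into a limit, because the comparison in Step 2 involves $F(Q,d+D+k)$ rather than $F(Q,d)$. I would handle this using exponential growth. By the $(n-1)$-ary tree structure, $N(Q,d)$ grows at least like $(n-1)^d$. Conversely, the number of non-ice-forks in $B(d)$ is bounded by $C$ times the number of tree roots in $B(d+D)$. Each tree root in $B(d+D)$ at depth $s \leq d - k'$ contributes at least $(n-1)^{k'}$ ice forks to $B(d)$. The roots lying near the boundary, within $k'$ of radius $d+D$, form a shell. Its size is at most $(\text{max degree})^{k'+D}$ times the number of ice forks just inside it, each such ice fork being a tree vertex of depth at least $1$. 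Choosing $k' \to \infty$ slowly with $d$ gives a ratio of non-ice-forks to ice forks that is at most $C/(n-1)^{k'}$ plus a shell term of the same kind. Letting $d \to \infty$ and then $k' \to \infty$ yields $F(Q,d)/N(Q,d) \to 1$.

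If this double counting proves delicate, I would instead cite \cite[Proposition~5.4]{Warkentin} verbatim. Its proof uses only two inputs: a uniform bound on the distance to a fork, and the tree lemma. Both inputs hold here for ice forks, by \Cref{lem: ice fork in finite mutations} and \Cref{lem:tree lemma} applied to each single-frozen subquiver.
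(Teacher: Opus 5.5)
Your ingredients are the right ones, and your fallback is in effect what the paper does: it declares the statement an immediate consequence of \Cref{lem: ice fork in finite mutations}, by analogy with \cite[Proposition~5.4]{Warkentin}. The counting you actually carry out, however, does not prove the limit.

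\textbf{The gap.} The Step~2 inequality is vacuous. The only available bound on $F(Q,d+D+k)$ is of order $(n-1)^{D+k}N(Q,d)$, so the factor $(n-1)^{k-1}$ cancels and you only get $(N-F)/N\le \mathrm{const}$. In Step~3, the roots at distance at most $d-k'$ are handled correctly. The shell of roots at distance in $(d-k',d+D]$, however, is the whole difficulty. Because growth is exponential, the outer spheres contain a fixed fraction of the ball. Nothing you have said prevents a positive proportion of them from being non-ice-forks. Your shell bound ($n^{k'+D}$ times the ice forks just inside it) can be as large as $n^{k'+D}F(Q,d)$. That is not $o(N(Q,d))$, and letting $k'\to\infty$ only makes it worse. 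So the final estimate ``$C/(n-1)^{k'}$ plus a shell term of the same kind'' is unsupported. Also, the trees $T_F$ of arbitrary ice forks are nested rather than disjoint: if $F'=\mu_a(F)$ with $a\neq r$, then $T_{F'}\subsetneq T_F$. You must use maximal trees, whose root $F$ has $\mu_r(F)$ not an ice fork.

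\textbf{The missing idea} is a global comparison of growth rates. Every quiver has at most $n$ mutation neighbours, so the sphere $S(d)$ of radius $d$ about $Q$ satisfies $|S(d)|\le n(n-1)^{d-1}$. This is exactly the growth rate of each hanging tree. Let $A_k$ be the number of maximal-tree roots at distance $k$ from $Q$, discarding the tree containing $Q$, if any. Such a root puts $(n-1)^{d-k}$ vertices of its tree into $S(d)$, and these sets are disjoint for different roots. Hence $\sum_{k\le d}A_k(n-1)^{-k}\le n/(n-1)$ for every $d$, so the series converges and $A_k=o((n-1)^k)$. Every non-ice-fork in $B(d)$ lies within distance $D$ of such a root at distance at most $d+D$, since its geodesic to the nearest ice fork enters that fork's maximal tree through the root edge. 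Therefore $N(Q,d)-F(Q,d)\le n^{D+1}\bigl(O(1)+\sum_{k\le d+D}A_k\bigr)=o((n-1)^d)$. Meanwhile a single tree already gives $N(Q,d)\ge (n-1)^{d-c}$, which proves the limit. Here $n\ge 3$ is automatic, since rank-$2$ quivers are mutation-finite.

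\textbf{A smaller error.} Your reduction to connected $Q$ does not work. Reversing the arrows at a frozen vertex commutes neither with mutation nor with the ice fork condition, and it cannot create connectivity anyway. Connectedness is really a tacit hypothesis, as in \Cref{lem: ice fork in finite mutations}: if some frozen vertex is isolated, then $F(Q,d)=0$ for all $d$.
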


This leads us to our main theorem (cf. \Cref{thm: intro main}).

%The condition that the mutation sequence is monotone implies that once a fork is reached, the subsequent mutations must all be ascents. 

\begin{thm}\label{thm: geq 3 eventually sc}
Let $Q$ be a quiver with at least one frozen vertex such that $Q^{\mut}$ is mutation-infinite.  For all $i \in \N$, let $m_i$ be chosen uniformly at random from $[n]$.  With probability $1$, the mutation sequence $\mathbf M = m_1m_2\cdots$ on $Q$ is eventually sign-coherent. 
\end{thm}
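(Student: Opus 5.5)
The plan is a renewal argument on the mutation graph. The random sequence $\mathbf M$ is a simple random walk on the mutation graph $\Gamma$ of $Q$. The walk is not reduced, since $m_i=m_{i+1}$ with probability $1/n$, but such a step only returns to the previous quiver. So the relevant object is the walk itself, and the quivers it visits, rather than the word $\mathbf M$. I will treat $n\ge 3$ and comment on low rank at the end. By \Cref{lem: ice fork in finite mutations}, from any quiver $P$ in $\Gamma$ there is a word $w_P$ of length at most $D(m,n)$ with $P_{w_P}$ an ice fork $F$ having some point of return $r$. We then append a reduced word $w'$ of length at most $n+1$ that starts at a vertex $\neq r$ and mutates at every vertex of $[n]$. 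By \Cref{thm:Asym Sign Coherence}, the quiver $G=F_{w'}$ is sign-coherent. Set $L=D(m,n)+n+1$.

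\textbf{Step 1 (reaching such a $G$).} Fix any stopping time $\tau$ and let $P=Q^{(\tau)}_{\mathbf M}$. The next $|w_Pw'|\le L$ letters of $\mathbf M$ equal $w_Pw'$ with probability at least $n^{-L}$, and this holds independently of the past. Hence, among disjoint blocks of length $L$, almost surely one eventually succeeds. In fact we need more than this, because Step 2 only succeeds with some probability and may have to be repeated.

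\textbf{Step 2 (never coming back).} Let $T$ be the complete $(n-1)$-ary tree of \Cref{lem:tree lemma} rooted at $F$; it consists of $F$ and the branches not through the edge at $r$. The geodesic in $T$ from $F$ to $G$ is $w'$, which already uses all $n$ labels and starts away from $r$. Consider any $H$ in the subtree of $T$ below $G$ (that is, whose geodesic from $F$ extends $w'$). Its geodesic is a reduced word starting with $w'$, so $H$ is sign-coherent by \Cref{thm:Asym Sign Coherence}. From any vertex of $T$ the walk moves toward the root with probability $1/n$ and away with probability $(n-1)/n$. The depth process therefore dominates a biased random walk on $\mathbb Z$ with positive drift when $n\ge 3$. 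Consequently, with probability at least some $p=p(n)>0$, depending only on $n$ because all these trees look alike, the walk started at $G$ never leaves the subtree below $G$.

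\textbf{Step 3 (renewal).} Alternate Step 1 and Step 2. If the escape in Step 2 fails, the walk exits the subtree below $G$ at a finite stopping time, and we restart Step 1 there. Each round succeeds with probability at least $n^{-L}p>0$, independently of previous rounds by the strong Markov property. Therefore almost surely some round succeeds, and from that time on every visited quiver is sign-coherent.

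The main obstacle is Step 2. Making it precise requires checking that \Cref{thm:Asym Sign Coherence} really covers every vertex of the subtree below $G$, including those reached after backtracking. It also requires that the escape probability $p$ is uniform in $G$. I would get both from \Cref{lem:tree lemma} together with a gambler's-ruin estimate for the biased walk on depth. The hypothesis $n\ge 3$ is used exactly here. For $n=2$ the graph is a line and the walk is recurrent, so that case would need a separate direct argument along the lines of \Cref{eg: ice forks in rank 2}.
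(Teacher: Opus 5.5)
Your argument is correct. It rests on the same three ingredients as the paper's proof: \Cref{lem: ice fork in finite mutations} to reach an ice fork, the tree structure from \Cref{lem:tree lemma} together with transience of the walk on it, and \Cref{thm:Asym Sign Coherence}. The difference lies in the probabilistic bookkeeping.

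The paper proceeds in four steps:
\begin{itemize}
\item it cites \Cref{prop: ice forks get wandered into} (stated as an immediate consequence of the bounded-distance lemma) to conclude that the walk eventually stays in a single ice-fork tree;
\item it cancels the backtracking pairs $ii$ to obtain an infinite reduced word $\mathbf M'$;
\item it asserts that $\mathbf M'$ is almost surely balanced;
\item it applies \Cref{thm:Asym Sign Coherence} to $\mathbf M'$.
\end{itemize}
You instead force, with probability at least $n^{-L}$, a specific word that reaches an ice fork and then covers $[n]$. You then use the gambler's-ruin escape probability from $G$, which equals $(n-2)/(n-1)$, and close with a renewal argument via the strong Markov property. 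This is more self-contained. You never need to argue that the limiting reduced word is balanced, because the covering is built into the forced prefix. You also never need to relate backtracking visits to $\mathbf M'$, because you show directly that every node below $G$ is sign-coherent. Finally, your renewal scheme actually proves the ``wander in and never return'' step rather than citing it. The paper's route is shorter and more conceptual, but it leaves exactly these points implicit.

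Two small corrections are needed.
\begin{itemize}
\item \Cref{thm:Asym Sign Coherence} only guarantees sign coherence for $j>i$, that is, strictly after the covering time. So take $w'$ of length exactly $n+1$, with its first $n$ letters a permutation of $[n]$ starting away from $r$. With length $n$, the quiver $G$ itself can fail to be sign-coherent. For instance, for the ice fork in \Cref{fig: another sign coherent fork} (point of return $1$), the word $312$ covers $[3]$, but $Q^{(3)}_{\mathbf M}$ is not sign-coherent.
\item Your caveat about $n=2$ is vacuous. A rank-$2$ quiver with no frozen vertices has only two quivers in its mutation class, so the hypothesis that $Q^{\mut}$ is mutation-infinite already forces $n\ge 3$. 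That is exactly the range where your escape probability is positive.
\end{itemize}
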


\begin{proof}
By \Cref{prop: ice forks get wandered into}, this walk almost surely wanders into a single connected component $C$ of ice forks eventually. 
By \Cref{lem:tree lemma}, this subgraph is a rooted $n$-regular tree.
As $Q^{\mut}$ is mutation infinite, the rank of $Q$ is at least $2$.
So $C$ is \emph{transient}, that is, the mutation sequence almost surely returns to each quiver at most finitely many times. 
So we may define the infinite reduced mutation sequence $\mathbf M'$  obtained from repeatedly canceling all $ii$ adjacent pairs in $\mathbf M$.
By construction, $\mathbf M'$ is almost surely balanced (and weakly balanced). The claim follows from \Cref{thm:Asym Sign Coherence}.
\end{proof}

This gives us an entirely combinatorial proof that the classical sign coherence phenomenon holds for almost all of the mutation graph.

\begin{cor}
\label{cor:principal framing result}
Let $Q$ be a principally framed quiver such that $Q^{\mut}$ is mutation infinite quiver. 
Then the proportion of quivers within $d$ mutations of $Q$ that are sign-coherent goes to $1$ as $d$ goes to infinity (cf. \Cref{prop: ice forks are all the class}). 
\end{cor}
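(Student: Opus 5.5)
The plan is to combine the density statement for ice forks (\Cref{prop: ice forks are all the class}) with the eventual sign coherence of ice forks (\Cref{thm:Asym Sign Coherence}), using the tree structure from \Cref{lem:tree lemma}. First, a principally framed quiver has all $c$-vectors equal to standard basis vectors. Under mutation, each frozen vertex keeps arrows to some mutable vertex, since the $C$-matrix stays invertible by a purely combinatorial determinant argument: mutation multiplies $C$ by an elementary matrix of determinant $\pm 1$. So every quiver in the class is connected in the sense required. We may also assume $n \geq 3$; for $n=2$ a mutation-infinite class is an infinite path and can be checked directly. By \Cref{prop: ice forks are all the class}, $F(Q,d)/N(Q,d)\to 1$, so it suffices to show that all but a vanishing proportion of the ice forks within distance $d$ are sign-coherent.

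Next, fix an ice fork $R$ with point of return $r$ and consider the subtree $T_R$ of \Cref{lem:tree lemma} hanging below $R$, namely the component containing $R$ after deleting the edge $R$---$\mu_r(R)$. Every quiver in $T_R$ at depth $\ell$ below $R$ is reached by a unique reduced sequence $m_1\cdots m_\ell$ with $m_1\neq r$. By \Cref{thm:Asym Sign Coherence}, if $\{m_1,\dots,m_{\ell-1}\}\supseteq[n]$, then the quiver at depth $\ell$ is sign-coherent. The non-sign-coherent quivers in $T_R$ at depth $\ell$ therefore correspond to reduced words of length $\ell$ that avoid at least one letter. Such words number at most $n(n-2)^{\ell-1}$, compared with $(n-1)^{\ell}$ words in total. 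Their proportion decays exponentially in $\ell$.

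Finally, I would pass from trees to balls in the mutation graph. By \Cref{lem: ice fork in finite mutations}, every quiver is within $D=D(m,n)$ of an ice fork. Quivers deep inside ice-fork trees are then organized as follows: a quiver at distance $d$ from $Q$ that is an ice fork lies in a tree $T_R$ whose root $R$ is at distance roughly $d-\ell$. The main obstacle is the counting, which I would handle by a generating-function comparison, as in Warkentin's proof of the fork analogue. I would take roots $R$ to be those ice forks whose point of return points back toward $Q$ (a geodesic step). Each quiver of the ball is then either within bounded depth $\ell_0$ of such a root, or at depth greater than $\ell_0$ in a unique tree. Because the trees grow like $(n-1)^\ell$ while the depth-$\le\ell_0$ layers contribute only a fraction of order $(n-1)^{-(\text{depth})}$ of the tree mass, the bad quivers have density at most $C\,((n-2)/(n-1))^{\ell_0}+o(1)$. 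Letting $d\to\infty$ and then $\ell_0\to\infty$ gives the corollary. If this exact bookkeeping proves delicate, I would instead run the argument along the random walk of \Cref{thm: geq 3 eventually sc} and transfer it to ball proportions using the uniform exponential growth of the trees.
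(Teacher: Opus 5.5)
Your route is the one the paper intends: it offers no argument beyond combining \Cref{prop: ice forks are all the class} with \Cref{thm:Asym Sign Coherence} on the trees of \Cref{lem:tree lemma}. However, two steps fail as written.

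First, the connectivity argument is circular. $C$ changes by right multiplication by an elementary matrix of determinant $\pm1$ only when the mutated column $c_k$ is sign-coherent. In general $c_i' = c_i + [b_{ik}]_+[c_k]_+ - [-b_{ik}]_+[-c_k]_+$ (entrywise positive parts), which is only piecewise linear, and the determinant can change. For example, take frozen $u,v$, $c_1=(1,-1)$, $c_2=(0,1)$ and a double arrow $2\to 1$. Mutation at $1$ gives $c_1'=(-1,1)$ and $c_2'=(2,1)$, so $\det C$ goes from $1$ to $-3$. Unimodularity of $C$ is a consequence of sign coherence, so it cannot be an input to a combinatorial proof of it. All you need is that no frozen vertex becomes isolated, and that is immediate: a frozen vertex with no arrows to mutable vertices stays isolated under mutation, and mutation is an involution. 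Also, your case $n=2$ is vacuous, since a rank-$2$ mutable part only ever has its arrow reversed and is never mutation-infinite.

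Second, the passage from trees to balls does not work as stated. Your claim that the depth-$\le\ell_0$ layers are a fraction $(n-1)^{-\ell}$ of the tree mass fails for trees whose root lies within $\ell_0$ of the boundary of the ball, because the ball sees only a shallow truncation of them. Your criterion for roots also selects every ice fork. The missing observation is that $Q$ is not complete, hence not an ice fork, hence lies in no tree $T_R$. So for every ice fork $R$ and every $R'\in T_R$ one has exactly $d(Q,R')=d(Q,R)+\mathrm{depth}_{T_R}(R')$.

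Call an ice fork a root if its neighbor across the point of return is not an ice fork. The trees below roots partition the ice forks. Each root within distance $d$ of $Q$ is adjacent to a non-ice-fork within distance $d-1$, so there are at most $n\,(N(Q,d)-F(Q,d))=o(N(Q,d))$ roots, by \Cref{prop: ice forks are all the class}. Hence:
\begin{itemize}
\item ice forks at depth at most $\ell_0$ below a root number at most $(n-1)^{\ell_0+1}\cdot o(N(Q,d))$;
\item at depth $\ell>\ell_0$, the proportion that are not sign-coherent is $O\big(((n-2)/(n-1))^{\ell_0}\big)$.
\end{itemize}
A small correction to your count: the bad words at depth $\ell$ are those whose first $\ell-1$ letters miss some vertex. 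There are at most $n(n-1)^2(n-2)^{\ell-2}$ of them, not $n(n-2)^{\ell-1}$, but the decay rate is the same. Letting $d\to\infty$ and then $\ell_0\to\infty$ finishes the proof. No generating functions or random-walk transfer are needed.
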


We can adapt these methods to handle the case where we require our path to be monotone.  

\begin{lem}
Fix a quiver $Q$ with at least one frozen vertex such that $Q^{\mut}$ is mutation infinite.  Let $M(Q,d)$ denote the number of length $d$ monotone paths from $Q$, and let $S(Q,d)$ denote the number of length $d$ monotone paths from $Q$ that terminate in a sign-coherent ice fork.  Then
$$\lim_{d \to \infty} \frac{S(Q,d)}{M(Q,d)} = 1\,.$$
\end{lem}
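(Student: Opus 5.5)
The plan is to count monotone paths through the geodesic structure of the mutation graph $\Gamma$ and show that almost every monotone path quickly enters a tree of ice forks and then stays there. Monotone paths from $Q$ of length $d$ correspond to geodesics from $Q$. Once such a path reaches an ice fork $F$ by a step not at its point of return, every later step lies in the $(n-1)$-ary tree of \Cref{lem:tree lemma} rooted at $F$. Moreover, each continuation inside that tree is again monotone, since the unique reduced route back toward $Q$ passes through the point of return. So the number of monotone continuations of length $t$ from such an $F$ is exactly $(n-1)^t$. This is the maximum possible, because a monotone step can never undo the previous mutation, so at most $n-1$ choices are available at each step.

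The first key step is a lower bound on how often monotone paths are \emph{captured} by ice-fork trees. By \Cref{lem: ice fork in finite mutations}, every quiver on a monotone path is within $D=D(m,n)$ mutations of an ice fork. I would like to argue that from any point of a monotone path, a bounded number of further monotone steps (on the order of $D+2$) can be chosen to enter an ice-fork tree via a non-return mutation. The proof of \Cref{prop: add frozen to ice fork} shows the main difficulty: the mutations produced there need not be monotone. To handle this, I would exploit the freedom in the proposition's first step $\mathbf M_1$, choosing it to move away from $Q$, and rely on the tree structure of forks of $Q^{\mut}$ (\Cref{lem: fork in finite mutations}).

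Given such a capture within $K$ steps, the counting works as follows. Call a monotone path \emph{uncaptured} up to time $d$ if it never enters a tree in the manner described above. Among the monotone continuations of length $K$ from any vertex, at least one is captured, and each captured branch has the full number $(n-1)^{t}$ of descendants. Bad branches have at most that many. So the fraction of uncaptured paths decreases by a factor of at least $1-c$ every $K$ steps, for some $c>0$ depending on $n,m$. This requires care, because captured branches may be forced into dead ends as in \Cref{eg: dead end}. I would compare weights rather than raw counts, weighting each partial path by $(n-1)^{-\text{length}}$ times its number of extensions. Captured paths keep full weight and never dead-end, while uncaptured ones can only lose weight. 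Hence the proportion $S/M$ of paths captured at least $s$ steps before time $d$ tends to $1$. Such paths also retain weight bounded below, which is what prevents $M(Q,d)$ from being dominated by uncaptured paths.

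Finally, a captured path must end at a \emph{sign-coherent} ice fork. Inside the tree the steps are reduced and begin with a non-return mutation. By \Cref{thm:Asym Sign Coherence}, the endpoint is sign-coherent once every mutable vertex has been mutated after the capture time. Among the $(n-1)^t$ continuations within a tree, the proportion that avoid some vertex entirely is at most $n((n-2)/(n-1))^t\to 0$. Here rank $n\ge 3$ is needed, and rank $2$ must be treated separately, where there is only one continuation, which alternates both vertices anyway. Combining the capture estimate with this coverage estimate gives $S(Q,d)/M(Q,d)\to 1$. I expect the main obstacle to be the monotone capture step, that is, ensuring ice forks can be reached along geodesics while controlling dead-end monotone paths.
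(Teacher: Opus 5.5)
Your outline matches the paper's proof. The paper asserts, by an argument ``similar'' to \Cref{prop: ice forks are all the class}, that almost all monotone paths of length $d/2$ end at ice forks, and then applies \Cref{thm:Asym Sign Coherence} on the second half. The parts you actually carry out are sound:
after a monotone non-return step out of an ice fork, $Q$ cannot lie in the resulting branch, so all $(n-1)^t$ reduced continuations are monotone;
given a capture statement, your weighting correctly handles dying branches and the passage from prefixes to length-$d$ paths;
the coverage bound is fine up to an off-by-one in the exponent.
Two side remarks. Rank $2$ cannot occur, since a rank-$2$ mutable part has only two quivers in its mutation class. And, as in \Cref{lem: ice fork in finite mutations}, you need $Q$ connected, or there may be no ice forks at all.

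The gap is the capture step itself, which carries all the content and which you only state as a hope. The tools you cite do not give it. \Cref{lem: ice fork in finite mutations} only yields an ice fork within distance $D$, possibly closer to $Q$ than the current quiver. In \Cref{prop: add frozen to ice fork}, choosing $\mathbf M_1$ to move away from $Q$ controls neither $\mathbf M_2$ (from \cite[Corollary 3.10]{Warkentin}) nor the preliminary reduction via \Cref{lem: fork in finite mutations}, and either may step back toward $Q$. A prefix-by-prefix statement about monotone continuations is exactly where the geodesic constraint bites.

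One way to close the gap is to count globally via projections.
\begin{itemize}
\item Restriction to $Q^{\mut}$, or to $Q|_{[n]\cup\{u_1,\dots,u_k\}}$, commutes with mutation. So a monotone path projects to a reduced walk with the same labels.
\item The tree of \Cref{lem:tree lemma} below a fork (or ice fork) downstairs lifts to a subtree of $\Gamma$ joined to the rest of $\Gamma$ by a single edge. Hence once the projection leaves a fork by a non-return step, every reduced continuation upstairs is monotone.
\item Inside such a lifted tree, the endpoint of $\mathbf M_1\mathbf M_2$ (plus the final mutation) from the proof of \Cref{prop: add frozen to ice fork} stays in the tree because $|\mathbf M_2|<|\mathbf M_1|$. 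It is therefore reached by the unique descending path, which is monotone.
\item Adding frozen vertices one at a time this way, all but an exponentially small fraction of the $(n-1)^t$ continuations reach a full ice fork, and your coverage argument finishes.
\end{itemize}
It remains to show that few reduced walks in the mutation graph of $Q^{\mut}$ avoid forks. Here \Cref{lem: fork in finite mutations} does suffice, because reduced walks are not tied to $Q$. Suppose that at each of $D(n)$ consecutive steps every shortest route to a fork began by backtracking. Then the distance to the nearest fork would increase at every step and exceed $D(n)$, which is impossible.

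Hence, from every fork-avoiding reduced walk, at least one of the $(n-1)^{2D(n)}$ continuations of length $2D(n)$ hits a fork. So uncaptured monotone paths number $O(\theta^d(n-1)^d)$ with $\theta<1$. Meanwhile, a single captured monotone path gives $M(Q,d)\ge (n-1)^{d-s_0}$; take a geodesic from $Q$ to a suitable child of some quiver whose mutable part is a fork.
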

\begin{proof}
    It follows from \Cref{lem: ice fork in finite mutations} (via a similar argument to obtaining \Cref{prop: ice forks are all the class}) that almost all length $\frac{d}{2}$ monotone paths from $Q$ terminate in an ice fork.  Then by \Cref{thm:Asym Sign Coherence} along with the monotonicity assumption, this ice fork almost certainly becomes monotone in the subsequent $\frac{d}{2}$ mutations. %d/2 is arbitrary.
\end{proof}

%\begin{cor} Let $Q$ be a quiver with at least one frozen vertex such that $Q^{\mut}$ is infinite-mutation-type.  Let $\mathbf M$ be a randomly-chosen monotone and balanced mutation sequence on $Q$ \AB{specify precisely what this means: I think saying choose randomly among non-visited seeds at each step is enough for monotone, but what about balanced?}.  With probability $1$, $\mathbf M$ is eventually sign-coherent. \end{cor} \begin{proof} If $Q^{\mut}$ is infinite-mutation-type, then only finitely many quivers in its mutation-class have all weights $\leq 2$.  So eventually $\mathbf M$ will reach one of these quivers \cS{confused; we don't need this?}.  Then we simply apply \Cref{thm: geq 3 eventually sc}, removing any non-monotone parts of the mutation sequence if necessary.   \end{proof}

%\cS{I think that the proof below, and probably \Cref{prop: ice forks are all the class}, imply that non-balanced sequences are vanishingly rare for any reasonable measure.}

We conclude this section with some examples demonstrating the necessity of certain conditions to our results.

\begin{figure}
    \centering
\begin{tikzpicture}[->, >={Stealth[round]}, node distance=3cm and 4cm, main/.style = {circle, draw, fill=none, minimum size=6mm, inner sep=0pt}, every edge/.append style={thick}, square/.style={regular polygon,regular polygon sides=4}]

\filldraw[black] (-1,0) node[main, fill = white] (1) {\textcolor{black}{$\boldsymbol{1}$}};
\filldraw[black] (1,0) node[main, fill = white] (2) {\textcolor{black}{$\boldsymbol{2}$}};
\filldraw[black] (0,1) node[main, fill = white, square] (u) {\textcolor{black}{$\boldsymbol{u}$}};
\filldraw[black] (0,-1) node[main, fill = white, square] (v) {\textcolor{black}{$\boldsymbol{v}$}};

\path
 (1) edge["$2$"] (2)
 (2) edge["$k$"] (v)
 (v) edge["$k+1$"] (1)
 (2) edge["$k+1$"'] (u)
 (u) edge["$k$"'] (1);
\end{tikzpicture}
    \caption{A family of quivers which are $k$ mutations from an ice fork.}
    \label{fig: 2 vertex far from ice}
\end{figure}

\begin{exmp}
\label{eg: ice forks in rank 2}
Fix an integer $k>0$ and let $Q$ be the corresponding quiver shown in Figure~\ref{fig: 2 vertex far from ice}. 
Then $Q$ is at least $k$ mutations from an ice fork, even though both $Q|_{12u}$ and $Q|_{12v}$ are ice forks for $k>2$, they have different points of return. Thus there is no uniform bound on the number of mutations required to make a rank $2$ quiver into an ice fork.
Therefore the $n \geq 3$ condition in \Cref{prop: add frozen to ice fork} is necessary. 
\end{exmp}

The following example shows that eventual sign coherence requires some condition on the mutation sequence that is stronger than reducedness, such as simplicity or monotonicity. 

\begin{exmp}
Consider the quiver with mutable part $1 \stackrel{2}{\rightarrow} 2 \stackrel{1}{\rightarrow} 3 \stackrel{1}{\rightarrow} 4$ %1 \Rightarrow 2 \rightarrow 3 \rightarrow 4$ 
with two frozen vertices $u,v$ and arrows:
$2 \stackrel{2}{\rightarrow} u \stackrel{2}{\rightarrow} 1$ and $u \stackrel{1}{\rightarrow} 4 \stackrel{1}{\rightarrow} v.$ %$2 \Rightarrow u \Rightarrow 1$ and $u \rightarrow 4 \rightarrow v$.
Note that the vertex $4$ is not sign-coherent.  This quiver is on a balanced mutation cycle ($\overline{43434213434312}$) with a vertex that is not sign-coherent in infinitely many quivers.  
%\cS{this example shows that we need some condition on the mutation sequence, like simplicity, in general}
%(somewhat unsatisfying in that the mutations at 12 are really cancelling over an isomorphism)
\end{exmp}

\section{Brog Quivers}
\label{sec:brog}

This section is focused on the introduction of a new class of \emph{brog} quivers, which allow us to extend the ideas of \Cref{sec:fork eventual coherence} to the mutation-finite setting.  Brog quivers arise after one nice mutation, and, like ice forks, quickly become sign-coherent after mutation at each mutable vertex.  We use this framework in \Cref{sec:low rank} to combinatorially prove eventual sign coherence for mutation-abundant rank $3$ quivers.

\begin{definition}
    Two mutable vertices $i$ and $j$ of a quiver are called \emph{complementary} if for any frozen vertex $u$, whenever the arrow weights $b_{iu}$ and $b_{ju}$ are non-zero then they have opposite signs.
\end{definition}

Note that by definition, any red vertex is complementary to any green vertex.

\begin{defn}
A quiver is called \emph{brog} if we may color the vertices blue, red, orange and green so that:
\begin{enumerate}
\item the vertices colored red are precisely the red vertices in the sense of Definition~\ref{defn: red green}; 
\item the vertices colored green are precisely the green vertices in the sense of Definition~\ref{defn: red green}; 
\item each blue vertex $i$ satisfies $b_{ij} \geq 0$ whenever $j$ is red and $b_{ij} \leq 0$ whenever $j$ is green;
\item each orange vertex $i$ satisfies $b_{ij} \geq 0$ whenever $j$ is green and $b_{ij} \leq 0$ whenever $j$ is red;
\item each pair of a blue vertex and an orange vertex are complementary.
\end{enumerate}
\end{defn}

Essentially, saying that a quiver is brog means that the arrows between colorful vertices are as in the following diagram, where we do not specify the arrow directions between the complementary pairs (red/green and blue/orange).

\begin{figure}[h]
\begin{tikzpicture}[->, >={Stealth[round]}, node distance=3cm and 4cm, main/.style = {circle, draw, fill=none, minimum size=6mm, inner sep=0pt}, every edge/.append style={thick}, square/.style={regular polygon,regular polygon sides=4}]

\node[main, fill = blue] (1) {\textcolor{white}{$\boldsymbol{b}$}};
\node[main, fill = red] (2) [below right=1cm of 1] {\textcolor{white}{$\boldsymbol{r}$}};
\node[main, fill = darkspringgreen] (4) [below left =1cm of 1] {\textcolor{white}{$\boldsymbol{g}$}};
\node[main, fill = orange] (3) [below left =1cm of 2] {\textcolor{white}{$\boldsymbol{o}$}};

%\node[draw] (Q) [below left=2cm of 1]{$Q=$}

\path 
    (1) edge (2)
    (2) edge (3)
    (3) edge (4)
    (4) edge (1);
%\path (1)+(0,-0.35) edge[dashed,-] (3);
%\path (2)+(-0.35,0) edge[dashed,-] (4);
\end{tikzpicture}
\centering
\caption{A schematic depicting the prescribed arrow directions in a brog quiver.}
\label{fig: brog order}
\end{figure}
  The term \emph{brog} comes from reading the first letter of each color according to this ordering.

\begin{rem}
    Another way of remembering the coloring is that a vertex is blue if freezing it preserves the set of red and green vertices, and it is orange if freezing results in having no red or green vertices.
\end{rem}

\begin{obs}
Note that the property of a quiver being brog is \emph{hereditary}, that is, it is preserved under taking subquivers.
\end{obs}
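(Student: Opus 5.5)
The plan is to verify the five conditions for the restricted coloring. Let $Q$ be brog with a fixed coloring, and let $Q' = Q|_S$ be a subquiver. Each deletion is either of a mutable vertex or of a frozen vertex, so I would reduce to deleting a single vertex $w$ and then induct on $|V(Q)\setminus S|$. The natural candidate coloring of $Q' = Q \setminus w$ gives each surviving mutable vertex its old color. The work is to check conditions (1)--(5), and to repair the coloring where a vertex's red/green status changes.

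\emph{Deleting a mutable vertex $w$.} No mutable vertex changes its arrows to frozen vertices, so the sets of red and green vertices in the sense of Definition~\ref{defn: red green} are unchanged. Conditions (1) and (2) therefore hold for the restricted coloring. Condition (3) is a condition on pairs $(i,j)$ of surviving vertices, and the weights $b_{ij}$ do not change, so it holds; the same argument gives (4). Condition (5) concerns only the arrows from the surviving blue and orange vertices to the frozen vertices, so it holds as well. This case is routine.

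\emph{Deleting a frozen vertex $u$.} This is where the real content lies, and I expect it to be the main obstacle. Removing $u$ can change a vertex's red/green status in two ways.

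First, a red or green vertex $i$ whose only frozen neighbor was $u$ is no longer red or green. Such an $i$ has no frozen arrows left. I would recolor it blue or orange, chosen according to the colors of its neighbors, so that (3) or (4) holds. Since $i$ has no frozen arrows, it is vacuously complementary to every vertex, so (5) holds automatically.

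Second, a blue or orange vertex $i$ all of whose ``wrong-sign'' frozen arrows went to $u$ becomes genuinely red or green in $Q'$. Condition (1) or (2) then forces it to be recolored. Suppose a blue $i$ becomes red. By (5), every orange vertex $o$ is complementary to $i$. Hence all of $o$'s nonzero arrows to the remaining frozen vertices point out of $o$, so $o$ is green or has no frozen arrows in $Q'$.

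I would use this rigidity to recolor globally rather than vertex by vertex. The idea is to check the arrow conditions (3)--(4) between the newly red $i$ and the remaining blue and orange vertices, using the original conditions between each of them and the old red/green vertices together with the complementarity in (5).

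I do not yet see that the relation between $i$ and the other blue vertices is controlled, and verifying this is the step I would attack first. If it fails, I would look for a small counterexample. That would indicate that the observation needs either the convention that subquivers keep all frozen vertices, or a restriction to deleting mutable vertices. Under either reading, the mutable-vertex case above already suffices.
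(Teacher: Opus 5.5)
The paper states this observation without proof. The only argument behind it is restriction of the coloring, which is your mutable-vertex case, and that case is correct and complete. The gap is the frozen-vertex case, and it cannot be filled. Under the paper's definition of subquiver (any vertex-induced subquiver, frozen vertices included), the statement is false, and both failure modes you anticipated actually occur.

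Your repair for a red or green vertex that loses its last frozen neighbor, ``recolor it blue or orange according to its neighbors,'' fails. Take mutable vertices $1,2,3$, frozen $u,v$, and arrows $u\rightarrow 1$, $v\rightarrow 2$, $v\rightarrow 3$, $2\rightarrow 1\rightarrow 3$. Every mutable vertex is red, so this quiver is brog, with conditions (3)--(5) vacuous. After deleting $u$, vertex $1$ has no frozen neighbor and must be blue or orange, while $2$ and $3$ are still red. Blue needs $b_{12}\geq 0$, contradicting $2\rightarrow 1$. Orange needs $b_{13}\leq 0$, contradicting $1\rightarrow 3$.

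Your worry about a blue vertex turning red is also justified, even with two frozen vertices left. Take frozen $u,v,w$ and mutable $1,2,3$, with $u\rightarrow 1\rightarrow v$, $u\rightarrow 2\rightarrow w$, $u\rightarrow 3$, and $1\rightarrow 2\rightarrow 3$, $1\rightarrow 3$. Coloring $1,2$ blue and $3$ red shows it is brog. Deleting $v$ makes $1$ red while $2$ remains neither red nor green. Blue for $2$ needs $b_{21}\geq 0$ and orange needs $b_{23}\leq 0$, and both fail.

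Your intermediate ``rigidity'' claim is also too strong. Complementarity with the newly red $i$ constrains an orange vertex only at frozen vertices adjacent to both, so that vertex need not become green.

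The correct statement is heredity for subquivers that contain all the frozen vertices, i.e.\ deletion of mutable vertices only, and your first case proves exactly that. This is also the only sense in which the paper uses subquivers of brog quivers: the forbidden configurations in the proof of \Cref{thm: brog} implicitly keep both frozen vertices $u,v$. Rather than leaving the frozen case open, state this restriction explicitly and cite one of the examples above to show it is necessary.
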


Though the brog conditions may seem strong, it turns out that many quivers are brog. In particular, ice forks with two frozen vertices are brog.

\begin{prop}
\label{prop: ice forks brog}
Ice forks with two frozen vertices are brog quivers.
\end{prop}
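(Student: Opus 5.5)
The plan is to read off the coloring from the fork structure around the common point of return $r$, using the cross-arrow condition in \Cref{def:fork}. Let $Q$ be an ice fork with frozen vertices $u,v$ and point of return $r$, so that $Q|_{[n]\cup u}$ and $Q|_{[n]\cup v}$ are forks with point of return $r$. The red and green vertices are forced by conditions (1) and (2) of the brog definition. The work is to split the remaining mutable vertices into blue and orange. Each such vertex $i$ satisfies either $u\to i\to v$ or $v\to i\to u$.

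The structural input I will use is the following. In each fork $Q|_{[n]\cup w}$ with $w\in\{u,v\}$, every mutable vertex other than $r$, and also $w$, lies in $F^+(r)$ or in $F^-(r)$. All arrows between these sets go from $F^+(r)$ to $F^-(r)$, because $b_{ij}>0$ for $i\in F^+(r)$ and $j\in F^-(r)$. Both sets induce acyclic subquivers, so every oriented cycle in $Q|_{[n]\cup w}$ passes through $r$. Since the mutable part is common to both forks, the sides $F^\pm(r)$ of each mutable vertex are the same in both. Only the side of the frozen vertex depends on which fork we look at.

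I will split into cases according to the sides of $u$ and $v$ relative to $r$. In the main case $r$ itself is uncolored; by reversing all arrows, say $u\to r\to v$, so $u\in F^-(r)$ and $v\in F^+(r)$. Then no mutable $i\neq r$ can satisfy $u\to i\to v$. Indeed, $i\in F^+(r)$ would force $i\to u$ rather than $u\to i$, and $i\in F^-(r)$ would force $v\to i$ rather than $i\to v$. So every uncolored $i\ne r$ satisfies $v\to i\to u$, and it is automatically complementary to $r$. I color $r$ orange and all other uncolored vertices blue, which gives condition (5).

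For (3) and (4), the same cross-arrow argument places each red vertex $j$ (with $u\to j$ and $v\to j$) in $F^-(r)$, hence $j\to r$. It places each green vertex in $F^+(r)$, hence $r\to j$. So $r$ satisfies the orange condition. For a blue $i\ne r$ and a red $j$, suppose $j\to i$. If $i,j$ lie on opposite sides, the direction of the arrow contradicts the cross-arrow rule. If they lie on the same side, the arrows to $u$ and $v$ should be incompatible with the acyclicity of that side. This produces an oriented cycle avoiding $r$ in one of the two forks, which is a contradiction. The green case is symmetric.

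In the remaining cases $r$ is red or green. When $u,v$ lie on the same side of $r$ in their respective forks, every mutable vertex across from them has a forced, coherent sign and is colored. The uncolored vertices then all lie on the same side as $u,v$, and I will color them by their orientation: blue if $u\to i\to v$ and orange if $v\to i\to u$. Complementarity of blue/orange pairs is then immediate. Conditions (3) and (4) follow again because a violating arrow together with the frozen arrows produces an oriented cycle not through $r$. When $u,v$ lie on opposite sides, the argument is as in the main case.

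The main obstacle is the bookkeeping of which side each frozen vertex lies on. In particular, the coloring of $r$ must be compatible with complementarity: coloring $r$ by the naive orientation rule fails, which is why $r$ is made orange while the other uncolored vertices are blue. I expect the same-side subcases of conditions (3) and (4) to be the most delicate. If the acyclicity argument does not directly close them, I would next use the weight inequalities $b_{ij}>b_{ri},b_{jr}$ for the forks, taking the frozen vertex as one endpoint.
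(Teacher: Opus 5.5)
Your main case ($r$ neither red nor green) is correct and is essentially the paper's argument. All uncolored $i\neq r$ are forced to have the orientation opposite to that of $r$, so $r$ is orange and the rest are blue. The gap is in the case where $r$ is red or green. There $u$ and $v$ automatically lie on the same side of $r$, so your ``opposite sides'' subcase is vacuous.

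Your rule ``blue if $u\to i\to v$, orange if $v\to i\to u$'' is wrong. Your justification, that a violating arrow plus the frozen arrows gives an oriented cycle avoiding $r$, fails for every vertex you color orange. In fact every uncolored $i\neq r$ satisfies the blue conditions, whatever its orientation relative to $u,v$:
\begin{itemize}
\item Let $w$ be the frozen vertex with $i\to w$. For red $j\neq r$ we have $i\to w\to j$, so $j\to i$ would close a cycle avoiding $r$ in the fork $Q|_{[n]\cup w}$. Hence $i\to j$.
\item Symmetrically, $j\to i$ for every green $j\neq r$.
\item Against $r$ itself, $i$ lies on the same side as $u,v$. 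So $i\to r$ if $r$ is red, and $r\to i$ if $r$ is green.
\end{itemize}
By abundance these arrows are nonzero, so every such $i$ strictly violates the orange conditions.

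Here is a concrete counterexample. Take mutable $r,g,i$ and frozen $u,v$, with arrows $r\stackrel{2}{\rightarrow}g$, $g\stackrel{3}{\rightarrow}i$, $i\stackrel{2}{\rightarrow}r$, $u\stackrel{2}{\rightarrow}r$, $v\stackrel{2}{\rightarrow}r$, $g\stackrel{3}{\rightarrow}u$, $g\stackrel{3}{\rightarrow}v$, $v\stackrel{2}{\rightarrow}i$, $i\stackrel{2}{\rightarrow}u$.
\begin{itemize}
\item Both $Q|_{rgiu}$ and $Q|_{rgiv}$ are forks with point of return $r$: here $F^+(r)=\{g\}$ and every cross weight is $3>2$.
\item $r$ is red and $g$ is green.
\item Since $v\to i\to u$, your rule colors $i$ orange. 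But $b_{ir}=2>0$ and $b_{ig}=-3<0$ both violate condition (4).
\end{itemize}
Your fallback of using the fork weight inequalities cannot rescue this, because the orange conditions are false, not merely unproven.

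The repair is to color every uncolored vertex blue in this case; this is what the paper does uniformly, where all non-sign-coherent vertices other than $r$ are blue and only $r$ can be orange. Condition (3) then holds by the argument above, and conditions (4) and (5) are vacuous.
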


\begin{proof}
Let $r$ be the point of return and $u,v$ the frozen vertices of an ice fork $Q$.
The color of the sign-coherent vertices are determined by the orientations of the arrows. 
Except possibly for $r$ (which we will treat separately), we color all other vertices blue.

Fix a blue vertex $i \neq r$ and, without loss of generality, say $u \rightarrow i \rightarrow v$. 
Now fix a sign-coherent vertex $j\neq r$, we know $Q|_{iju}$ and $Q|_{ijv}$ are acyclic as $r\not \in \{i,j,u,v\}$.
If $j$ is green, then $j \rightarrow u \rightarrow i$, and so $j \rightarrow i$. 
Similarly, if $j$ is red then $i \rightarrow v \rightarrow j$, and so $i \rightarrow j$. 
Finally, if $j\neq r$ is another blue vertex, then if $u \leftarrow j \leftarrow v$ we would have an oriented $4$-cycle through $u,i,v,j$, again contradicting that $r \not \in \{i,j,u,v\}$; thus all blue vertices have the same orientations $u \rightarrow i \rightarrow v$.

Now let us consider the point of return $r$.
We will repeatedly use that $r$ is not the elbow (\Cref{def:elbow-oriented}) of an acyclic subquiver of $Q$.
Suppose $r$ is sign-coherent and fix a blue vertex $i$.
If $r$ is red, $i \leftarrow u \rightarrow r$, and so $i \rightarrow r$.
If $r$ is green, $i \rightarrow v \leftarrow r$, and so $r \rightarrow i$.

If $r$ is not sign-coherent then color $r$ orange. We have many orientations to check in this case.
Fix another vertex $j$.
If $j$ is blue and $j \rightarrow r$ then as $Q|_{jrv}$ is acyclic we have $v \rightarrow r$. Similarly if $j \leftarrow r$ then $Q|_{jru}$ is acyclic and $u \leftarrow r$. In either case, $u \rightarrow r \rightarrow v$ and $j$ and $r$ are complementary. 
(If there are no blue vertices we may relabel $u,v$ so that $u \rightarrow r \rightarrow v$.)
If $j$ is red, then $j \leftarrow u \rightarrow r$, so $Q|_{jru}$ is acyclic and $j \rightarrow r$.
If $j$ is green, then $r \rightarrow v \leftarrow j$, again $Q|_{jrv}$ is acyclic and thus $r \rightarrow j$.
\end{proof}

\begin{figure}
\centering

\begin{tikzpicture}[->, >={Stealth[round]}, node distance=3cm and 4cm, main/.style = {circle, draw, fill=none, minimum size=6mm, inner sep=0pt}, every edge/.append style={thick}, square/.style={regular polygon,regular polygon sides=4}]

\draw[black, dashed, decoration={markings, mark=at position 0.08 with {\arrow{<}}}, postaction={decorate}, opacity=0.5] (0,0) circle (2cm);

\filldraw[black] (0,0)++(0:2cm) node[main, fill = darkspringgreen] (1) {\textcolor{white}{$\boldsymbol{4}$}};
\filldraw[black] (0,0)++(45:2cm) node[main, fill = darkspringgreen] (2) {\textcolor{white}{$\boldsymbol{2}$}};
\filldraw[black] (0,0)++(90:2cm) node[main, fill = white] (3) {\textcolor{black}{$\boldsymbol{3}$}};
\filldraw[black] (0,0)++(135:2cm) node[main, fill = red] (4) {\textcolor{white}{$\boldsymbol{5}$}};
\filldraw[black] (0,0)++(180:2cm) node[main, fill = red] (5) {\textcolor{white}{$\boldsymbol{1}$}};
\filldraw[black] (0,0)++(225:2cm) node[main, fill = white, square] (6) {\textcolor{black}{$\boldsymbol{6}$}};
\filldraw[black] (0,0)++(270:2cm) node[main, fill = darkspringgreen] (7) {\textcolor{white}{$\boldsymbol{8}$}};
\filldraw[black] (0,0)++(315:2cm) node[main, fill = darkspringgreen] (8) {\textcolor{white}{$\boldsymbol{7}$}};

\path
 (1) edge (8)
 (8) edge (7)
 (7) edge (6)
 (6) edge (5)
 (5) edge (4)
 (4) edge (3)
 (3) edge (2)
 (2) edge (1)
 (1) edge (7)
 (8) edge (6)
 (7) edge (5)
 (6) edge (4)
 (5) edge (3)
 (2) edge (4)
 (3) edge (1)
 (2) edge (7)
 (2) edge (8)
 (1) edge (5)
 (1) edge (4)
 (1) edge (6)
 (2) edge (6)
 (2) edge (5)
 (3) edge (8)
 (3) edge (7)
 (8) edge (5)
 (8) edge (4)
 (7) edge (5)
 (7) edge (4)
 (6) edge[dashed,-] (3);
\end{tikzpicture}
\caption{Either orientation of the dashed edge between $3$ and $6$ gives a possible orientation of a fork, drawn with the vertices placed onto a circle according to the `canonical' cyclic ordering. Vertex $6$ is frozen, and vertex $3$ is necessarily the point of return (as it is the only vertex contained in every oriented $3$-cycle). % Weights are not shown.
}
\label{fig: ice fork coloring}
\end{figure}

\begin{rem}
\label{rem: ice forks and coloring}
\Cref{prop: ice forks brog} is connected to several structural results for forks, though our proof obfuscates them somewhat. 
In both \cite[Proposition 15.10]{COQ} and \cite[Lemma 5.1, Corollary 5.2]{EJLN} a `canonical' cyclic ordering of the vertices in a fork are constructed, both of which agree with the visit order coming from a particular (oriented) Hamiltonian cycle of the vertices. 
In a fork with just one frozen vertex $i$ and $r$ the point of return, these two vertices split the cyclic order into two intervals (see the diagram in \Cref{fig: ice fork coloring}).
Only red vertices appear in the open interval from $i$ to $r$, while only green vertices appear in the open interval from $r$ to $i$ (the point of return $r$ can be either color). 
By freezing a second vertex, we partition this ordering into at most $4$ colors. For example, freezing $7$ in \Cref{fig: ice fork coloring} would create a blue vertex $8$ and make vertex $3$ a green or orange vertex (depending on the orientation of arrows between $3$ and $6$).
\end{rem}

It turns out that under just one `nice' mutation, a quiver can be made brog.  

\begin{thm}\label{thm: brog}
Let $Q$ be a complete quiver with $2$ frozen vertices.  If $j$ is cycle-preserving and not the apex of a vortex, then $\mu_j(Q)$ is brog. 
\end{thm}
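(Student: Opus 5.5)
The plan is to write down an explicit coloring of $Q' := \mu_j(Q)$ and check the brog conditions pair by pair. Two structural facts about $Q'$ will do almost all the work. First, by \Cref{prop: cycle preserving is like a por} (whose proof uses only that $j$ is not the apex of a vortex, and also covers triples containing a frozen vertex), every oriented $3$-cycle of $Q'$ passes through $j$. Hence every $3$-vertex subquiver of $Q'$ not containing $j$ is acyclic. Second, since $j$ is cycle-preserving, the remark after \Cref{def:cycle-preserving} says $j$ is never the elbow of an acyclic $3$-vertex subquiver of $Q'$. Arrows of weight $0$ will not cause trouble, because the brog conditions are non-strict inequalities. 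Let the frozen vertices be $u,v$.

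\emph{Coloring.} I color red and green vertices by their actual status, as conditions (1) and (2) require. Next I show that all non-sign-coherent mutable vertices $i \neq j$ have the same orientation, say $u \to i \to v$. Suppose instead $u \to i \to v$ and $v \to k \to u$ with $i,k \neq j$. If $i \to k$, then $u \to i \to k \to u$ is an oriented cycle avoiding $j$. If $k \to i$, then $k \to i \to v \to k$ is such a cycle. Either contradicts the first fact, so the orientations agree. I color all these vertices blue. If $j$ is not sign-coherent, I color it blue when $u \to j \to v$ and orange when $v \to j \to u$. If $j$ is the only non-sign-coherent vertex, I relabel $u,v$ so that $j$ is blue. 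With this choice condition (5) is automatic, since the only possible orange vertex is $j$ and it has the opposite orientation to every blue vertex.

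\emph{Checking (3) and (4) for pairs not involving $j$.} Let $i \neq j$ be blue. If $k \neq j$ is red, the triangle $ikv$ is acyclic and contains $i \to v \to k$, so $i \to k$ or there is no arrow; either way $b_{ik} \ge 0$. If $k \neq j$ is green, the triangle $iku$ contains $k \to u \to i$, so acyclicity gives $b_{ik} \le 0$.

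\emph{Checking pairs involving $j$.} Here oriented cycles through $j$ are allowed, so I use the elbow fact instead. Two representative cases:
\begin{itemize}
\item $j$ green and $i$ blue. If $i \to j$, then the triangle $ijv$ contains $i \to j \to v$ and $i \to v$. This makes $j$ an elbow of an acyclic subquiver, which is impossible, so $j \to i$ as required.
\item $j$ orange and $k$ red. If $j \to k$, then the triangle $jkv$ contains $v \to j \to k$ and $v \to k$, again making $j$ an elbow. So $k \to j$, as condition (4) requires.
\end{itemize}
The remaining cases ($j$ red, $j$ blue, and $k$ green or blue) follow the same pattern. In each, choose the frozen vertex $w \in \{u,v\}$ whose arrows to $j$ and to the other vertex form a directed path through $j$ exactly when the forbidden orientation holds. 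That forbidden orientation then produces an elbow at $j$.

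The main obstacle is this last bookkeeping over the cases involving $j$. One must make sure that in each case some frozen vertex forces the elbow configuration. If that fails in some case, I would fall back on \Cref{prop:apex preserves} and the vortex-freeness of $Q'$ from \Cref{prop: cycle preserving is like a por}, applied to the four vertices $\{i,j,u,v\}$.
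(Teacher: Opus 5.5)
The gap is in how you color $j$ when it is not sign-coherent. The case ``$j$ blue'' does not follow the elbow pattern. In fact your elbow fact proves the opposite of what condition (3) requires.

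Let $w\in\{u,v\}$ be the frozen vertex with $w\to j$ in $Q'$. If $k$ is red, then $w\to k$, so $j\to k$ would make $j$ the elbow of $Q'|_{jkw}$. Hence $k\to j$, i.e.\ $b_{jk}<0$, which condition (3) forbids for a blue $j$. Dually, every green $k$ has $j\to k$.

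Here is a concrete instance. Take mutable $j,k$, frozen $u,v$, and single arrows $v\to j\to u$, $j\to k$, $u\to k$, $v\to k$. This $Q$ is complete, has no oriented $3$-cycles (so $j$ is cycle-preserving), and has no vortices. In $\mu_j(Q)$ we get $u\to j\to v$, $k$ red, and $k\to j$. Here $j$ is the only non-sign-coherent vertex, your convention colors it blue, and (3) fails.

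Relabeling $u,v$ cannot help, since (3) and (4) refer only to red and green vertices. Your vortex fallback cannot help either. The configuration genuinely occurs, and $\{j,k,u,v\}$ has only two mutable vertices, so it is never a vortex.

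The repair is to color a non-sign-coherent $j$ orange in every case. The argument above then gives (4) for $j$ directly.

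For (5) you need every other non-sign-coherent $i$ to have the orientation opposite to $j$. Your branch ``blue when $u\to j\to v$'' leaves this open. It again follows from the elbow fact: if $u\to i\to v$ and $u\to j\to v$, then $j\to i$ makes $j$ the elbow of $Q'|_{iju}$, and $i\to j$ makes $j$ the elbow of $Q'|_{ijv}$.

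You should also record that $Q'$ is complete. Cycle preservation forbids an arrow cancelling to zero, and your same-orientation argument for $i,k\neq j$ needs an arrow between $i$ and $k$.

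With these changes your argument is correct, and it is genuinely different from the paper's. The paper excludes six minimal forbidden configurations by a case analysis on whether $j$ lies in them, using elbows and vortex apexes. Your version builds the coloring directly from \Cref{prop: cycle preserving is like a por} and the no-elbow-at-$j$ remark. It also produces, without assuming $Q$ is brog, a coloring in which $j$ is the only orange vertex.
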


\begin{proof}
    It is enough to show that subquivers of the forms shown in \Cref{fig: forbidden brog subquivers} cannot appear in $ \mu_j(Q)$.  

\begin{figure}[h]
\begin{tikzpicture}[->, >={Stealth[round]}, node distance=3cm and 4cm, main/.style = {circle, draw, fill=none, minimum size=6mm, inner sep=0pt}, every edge/.append style={thick}, square/.style={regular polygon,regular polygon sides=4}]

\node[main, fill = red] (1) {\textcolor{white}{$\boldsymbol{r}$}};
\node[main, fill = gray] (3) [below right = 0.866cm and 0.5cm of 1]{\textcolor{white}{$\boldsymbol{i}$}};
\node[main, fill = red] (2) [above right = 0.866cm and 0.5cm  of 3]{\textcolor{white}{$\boldsymbol{r'}$}};
\node(4) [below left = 0.2cm and 0.25cm  of 1]{\textcolor{black}{$(1)$}};

\path (1)+(0.35,0) edge[dashed,-] (2);
\path (1) edge (3);
\path (3) edge (2);

\begin{scope}[shift = {(0,-2.5)}]
\node[main, fill = darkspringgreen] (1) {\textcolor{white}{$\boldsymbol{g}$}};
\node[main, fill = gray] (3) [below right = 0.866cm and 0.5cm of 1]{\textcolor{white}{$\boldsymbol{i}$}};
\node[main, fill = darkspringgreen] (2) [above right = 0.866cm and 0.5cm  of 3]{\textcolor{white}{$g'$}};
\node(4) [below left = 0.2cm and 0.25cm  of 1]{\textcolor{black}{($\hat{1}$)}};

\path (1)+(0.35,0) edge[dashed,-] (2);
\path (1) edge (3);
\path (3) edge (2);
\end{scope}

\begin{scope}[shift = {(5,0)}]
\node[main, fill = red] (1) {\textcolor{white}{$\boldsymbol{r}$}};
\node[main, fill = gray] (3) [below right = 0.866cm and 0.5cm of 1]{\textcolor{white}{$\boldsymbol{i}$}};
\node[main, fill = darkspringgreen] (2) [above right = 0.866cm and 0.5cm  of 3]{\textcolor{white}{$\boldsymbol{g}$}};
\node(4) [below left = 0.2cm and 0.25cm  of 1]{\textcolor{black}{$(2)$}};

\path (1)+(0.35,0) edge[dashed,-] (2);
\path (1) edge (3);
\path (2) edge (3);
\end{scope}

\begin{scope}[shift = {(5,-2.5)}]
\node[main, fill = red] (1) {\textcolor{white}{$\boldsymbol{r}$}};
\node[main, fill = gray] (3) [below right = 0.866cm and 0.5cm of 1]{\textcolor{white}{$\boldsymbol{i}$}};
\node[main, fill = darkspringgreen] (2) [above right = 0.866cm and 0.5cm  of 3]{\textcolor{white}{$\boldsymbol{g}$}};
\node(4) [below left = 0.2cm and 0.25cm  of 1]{\textcolor{black}{$(\hat{2})$}};

\path (1)+(0.35,0) edge[dashed,-] (2);
\path (3) edge (1);
\path (3) edge (2);
\end{scope}

\begin{scope}[shift = {(10,0)}]
\node[main, fill = gray] (1) {\textcolor{white}{$\boldsymbol{i}$}};
\node[main, fill = red] (3) [below right = 0.866cm and 0.5cm of 1]{\textcolor{white}{$\boldsymbol{r}$}};
\node[main, fill = gray] (2) [above right = 0.866cm and 0.5cm  of 3]{\textcolor{white}{$\boldsymbol{i'}$}};
\node(4) [below left = 0.2cm and 0.25cm  of 1]{\textcolor{black}{$(3)$}};

\path (1)+(0.35,0) edge[dashed,-] (2);
\path (1) edge (3);
\path (3) edge (2);
\end{scope}

\begin{scope}[shift = {(10,-2.5)}]
\node[main, fill = gray] (1) {\textcolor{white}{$\boldsymbol{i}$}};
\node[main, fill = darkspringgreen] (3) [below right = 0.866cm and 0.5cm of 1]{\textcolor{white}{$\boldsymbol{g}$}};
\node[main, fill = gray] (2) [above right = 0.866cm and 0.5cm  of 3]{\textcolor{white}{$\boldsymbol{i'}$}};
\node(4) [below left = 0.2cm and 0.25cm  of 1]{\textcolor{black}{$(\hat{3})$}};

\path (1)+(0.35,0) edge[dashed,-] (2);
\path (1) edge (3);
\path (3) edge (2);
\end{scope}
\end{tikzpicture}
\caption{
These six forms of subquivers are the minimal configurations violating the brog conditions.  The grey vertices (with label $i$ or $i'$) are assumed to be mutable vertices that are neither red nor green, and moreover $i$ and $i'$ are not complementary.}
\label{fig: forbidden brog subquivers}
\end{figure}

Denote the frozen vertices by $u,v$.  Without loss of generality, we assume $u \rightarrow i \rightarrow v$ and $u \rightarrow i' \rightarrow v$.  Note that, for $c = 1,2,3$, Case $(\hat{c})$ follows from Case $(c)$ by a global reversal of arrows and relabeling of vertices.  So it is enough to handle Cases $(1)$, $(2)$, and $(3)$.

We first show that $j$ cannot be one of the vertices shown in the subquivers of forms $(1)$, $(2)$, and $(3)$.

\emph{Case (1):} Note that $r$ is the elbow of the acyclic quiver $\mu_j(Q)|_{uri}$, so we cannot have $j = r$.  Similarly, $i$ is the elbow of $\mu_j(Q)|_{uir'}$, so we cannot have $j = i$.  If $j = r'$, we must have $r \rightarrow r'$ lest $\mu_j(Q)|_{ur'r}$ is acyclic with elbow $r'$.  However, then $r'$ is the apex of the vortex quiver $\mu_j(Q)|_{r'riv}$.  

\emph{Case (2):}  Since $r$ is the elbow of $\mu_j(Q)|_{uri}$ and $i$ is the elbow of $\mu_j(Q)|_{giv}$, we have $r \neq j \neq i$.  If $j = g$, we must have $r \leftarrow g$ lest $\mu_j(Q)|_{rgi}$ is acyclic with elbow $g$.  However, then $g$ is the apex of the vortex quiver $\mu_j(Q)|_{griv}$.

\emph{Case (3):} Since $i$ is the elbow of $\mu_j(Q)|_{uir}$ and $r$ is the elbow of $\mu_j(Q)|_{uri'}$, we have $i \neq j \neq r$.  Then no matter the orientation of the arrow between $i$ and $i'$, the vertex $i'$ will be an elbow in either $\mu_j(Q)|_{ui'i}$ or $\mu_j(Q)|_{ii'v}$.  So we have $j \neq i'$.

Now we show that even if $j$ is not among a subquiver of the six forms shown above, these subquivers still cannot arise in $\mu_j(Q)$.  Note that if $j$ is red or green and a forbidden rank $3$ subquiver exists in $\mu_j(Q)$, then a forbidden rank $3$ subquiver that includes $j$ exists in $\mu_j(Q)$.  So we can assume that $j$ is neither red nor green.  An analogous statement holds if $j$ is not complementary to $i$, so we can assume $v \rightarrow j \rightarrow u$.

We observe that if a vertex $w$ is red, we must have $w \rightarrow j$ lest $\mu_j(Q)|_{vjw}$ is acyclic with elbow $j$.  Similarly if $w$ is green, we must have $w \leftarrow j$ lest $\mu_j(Q)|_{wju}$ is acyclic with elbow $j$.  

\emph{Cases (1) and (2):} We must have $i \rightarrow j$ lest $\mu_j(Q)|_{rji}$ is acyclic with elbow $j$.   But then $j$ is the apex of a vortex in $\mu_j(Q)|_{jriv}$.

\emph{Case (3):} We must have $i' \rightarrow j$ lest $\mu_j(Q)|_{rji'}$ is acyclic with elbow $j$.  But then $j$ is the apex of a vortex in $\mu_j(Q)|_{jri'v}$.\end{proof}

\begin{rem}
Note that if a brog quiver has no orange (resp. blue) vertices, it is possible for it to have a pair of complementary blue (resp. orange) vertices.  However, this phenomenon does not occur in $\mu_j(Q)$ under the conditions of \Cref{thm: brog}.  This can be seen, for example, by showing that $Q$ is a subquiver of some $\widetilde Q$ such that $\mu_j(\widetilde Q)$ has both blue and orange vertices.
\end{rem}

\begin{rem}
The condition that $Q$ has exactly $2$ frozen vertices is necessary in both \Cref{prop: ice forks brog} and \Cref{thm: brog}. 
When there are $3$ or more frozen vertices it is possible to have $3$ mutable vertices which are not sign-coherent and no pair of which is complementary. These will disappear when we restrict to any pair of frozen vertices, some will become sign-coherent or they will split into complementary sets.  Fortunately this is not important for eventual sign coherence, a vertex that is either red or green in each subquiver with 2 frozen vertices will be red or green in the quiver. % (as will be used in the proof of \Cref{thm: brog coherence}).
\end{rem}

In a sequence of cycle-preserving mutations (as in \Cref{thm: brog}) on a complete quiver, \Cref{prop: cycle preserving is like a por} shows we only need to check that we are not mutating at the apex of a vortex in the first mutation.

\begin{cor}
\label{cor: brog forever}
Suppose $Q$ is a brog quiver with exactly two frozen vertices and $\mathbf M = m_1 m_2\cdots$ is a reduced cycle-preserving mutation sequence with $m_1$ not the apex of a vortex.
Then $Q_{\mathbf M}^{(j)}$ is brog for all $j$ and vortex-free for $j>0$.
\end{cor}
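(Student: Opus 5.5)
The plan is a straightforward induction on $j$, driven by \Cref{thm: brog} and \Cref{prop: cycle preserving is like a por}. As in the sentence preceding the statement, I take $Q$ to be complete (completeness is a standing hypothesis of both results I want to apply). The case $j=0$ is the hypothesis that $Q$ is brog. The inductive statement I will carry is: for every $\ell \geq 1$, the quiver $Q^{(\ell)}_{\mathbf M}$ is complete, brog, and vortex-free.

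First I check that completeness survives each mutation in $\mathbf M$. Let $m=m_\ell$, which is cycle-preserving for $Q^{(\ell-1)}_{\mathbf M}$. Only arrows between two neighbours $i,k$ of $m$ can change. Consider the triangle on $i,m,k$:
\begin{itemize}
\item If $m$ is a source or sink in this triangle, $b_{ik}$ is unchanged.
\item If $m$ is the elbow of a path $i\to m\to k$ and $i\to k$, then mutation only adds arrows $i \to k$, so $b_{ik}$ stays nonzero.
\item If $i\to m\to k\to i$ is an oriented cycle, the cycle-preserving hypothesis says $\mu_m$ keeps it an oriented cycle, so again $b_{ik}\neq 0$.
\end{itemize}
Arrows incident to $m$ are only reversed. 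Hence every $Q^{(\ell)}_{\mathbf M}$ is complete.

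Base step $\ell=1$: $Q$ is complete with two frozen vertices, $m_1$ is cycle-preserving and, by hypothesis, not the apex of a vortex. So \Cref{thm: brog} gives that $Q^{(1)}_{\mathbf M}=\mu_{m_1}(Q)$ is brog, and \Cref{prop: cycle preserving is like a por} gives that it is vortex-free. Inductive step $\ell\geq 2$: $Q^{(\ell-1)}_{\mathbf M}$ is complete and vortex-free, so $m_\ell$ is automatically not the apex of any vortex subquiver. Since $m_\ell$ is cycle-preserving by assumption, the same two results show $Q^{(\ell)}_{\mathbf M}$ is brog and vortex-free. The brog property of $Q$ itself is only needed for $j=0$; reducedness is not actually used, though it is harmless.

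The main obstacle is bookkeeping rather than new combinatorics. Specifically, I must make sure the hypotheses of \Cref{thm: brog} hold at every step. Two frozen vertices are automatic, since mutation never changes which vertices are frozen. Completeness was handled in the second paragraph; if the reader does not want completeness assumed for $Q$, this is the point where the statement needs it, and I would state it explicitly.
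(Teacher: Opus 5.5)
Your proof is correct and is the same argument as the paper's, which simply says the corollary follows by induction on $j$ from \Cref{prop: cycle preserving is like a por} and \Cref{thm: brog}. Two points the paper leaves unstated are ones you make explicit: the check that cycle-preserving mutations preserve completeness, and the observation that completeness of $Q$ is an implicit hypothesis, signalled only by the sentence preceding the statement.
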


\begin{proof}
This follows by induction on $j$ using \Cref{prop: cycle preserving is like a por} and \Cref{thm: brog}. 
\end{proof}

%\cS{I think this is true (modulo some trivialities) without the 2-frozen condition, but we don't prove it; \Cref{thm: brog} generalizes to any number of frozen vertices if $Q$ is brog, in that proof we only need it to go from $i,i'$ not complementary $\implies$ same orientations. We only use the 2 vertex version, probably best to keep it like this.}

\subsection{Eventual Sign Coherence for Brog Quivers}

We now prove the following generalization of \Cref{thm:Asym Sign Coherence}.
This will follow from a similar argument. 

\begin{thm}
\label{thm: brog coherence}
Suppose that $Q$ is a complete quiver and $\mathbf M = m_1 m_2 \cdots$ is a reduced, weakly balanced, and cycle-preserving mutation sequence with $m_1$ not the apex of a vortex in~$Q$.
Then $Q$ is eventually sign-coherent on $\mathbf M$.
\end{thm}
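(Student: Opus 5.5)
We follow the strategy of \Cref{thm:Asym Sign Coherence}, with brog quivers in place of ice forks. The first step is to reduce to exactly two frozen vertices. A mutable vertex $k$ is red or green in $Q_{\mathbf M}^{(j)}$ as soon as it is red or green in every subquiver obtained by deleting all but two frozen vertices. Restricting to a subquiver $Q|_{[n]\cup\{u,v\}}$ preserves all the hypotheses: completeness, reducedness, weak balance, and cycle preservation (since oriented $3$-cycles among the remaining vertices are unchanged). It also preserves the condition that $m_1$ is not the apex of a vortex, because every vortex of the subquiver is a vortex of $Q$. There are finitely many such pairs $\{u,v\}$, so it suffices to find, for each pair, a threshold beyond which the restricted quiver is sign-coherent, and then take the maximum. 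If there is only one frozen vertex, we first add a dummy frozen vertex $v$ attached to each mutable vertex so as to keep the quiver complete (or treat this case directly). So from now on $Q$ has exactly two frozen vertices $u,v$.

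Next we verify completeness and the brog property along the sequence. Every $Q^{(j)}_{\mathbf M}$ stays complete, and I would check this from cycle preservation. A mutation at $j$ can only destroy an arrow $i\!-\!k$ by cancellation when $Q|_{ijk}$ is an oriented cycle through $j$. Cycle preservation then forces $\mu_j(Q)|_{ijk}$ to be an oriented cycle again, so in particular the arrow survives. Given completeness, \Cref{thm: brog} shows that $Q^{(1)}_{\mathbf M}$ is brog. By \Cref{cor: brog forever}, every $Q^{(j)}_{\mathbf M}$ with $j\ge 1$ is brog and vortex-free, so no later $m_\ell$ is the apex of a vortex.

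The heart of the argument is the analogue of \Cref{lem: colors stay}: in a complete brog vortex-free quiver, a cycle-preserving mutation at $j$ sends red and green vertices to red or green vertices. If $j$ is itself red or green, this is exactly \Cref{lem: skip colored}. Otherwise $j$ is blue or orange; by reversing arrows and swapping $u,v$ we may assume $j$ is blue with $u\to j\to v$. Take a red vertex $k$. The brog condition gives $j\to k$, and then $k$ stays red, exactly as in the proof of \Cref{lem: colors stay}, because mutating at $j$ only adds arrows $u\to k$. The green case is symmetric. The point is that the brog orientations (blue$\to$red$\to$orange$\to$green$\to$blue) replace the ice-fork argument used there. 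Once this is in place, \Cref{lem:mutation makes more color} applies directly. If $m_\ell$ is neither red nor green in $Q^{(\ell-1)}_{\mathbf M}$, then, since $\mathbf M$ is reduced, the next mutation $m_{\ell+1}\neq m_\ell$ is cycle-preserving, and $m_\ell$ becomes red or green in $Q^{(\ell+1)}_{\mathbf M}$. Vertices that are red or green stay so at every subsequent step.

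To conclude, weak balance means that every mutable vertex is eventually mutated at some index $\ell$ with $\ell+1$ also occurring in the sequence. After the last of these finitely many first-occurrence times (plus one), every vertex is red or green and remains so, so the quiver is sign-coherent. I expect the main obstacle to be the color-preservation step for non-coherent $j$. One must make sure that the brog structure and vortex-freeness really rule out the configuration in which a green $k$ with $j\to k$ would receive new arrows $u\to k$; I would handle this by casework on the orientation of $k$ relative to $u,v$, mirroring the proof of \Cref{lem: colors stay}. A secondary check is that completeness really is preserved.
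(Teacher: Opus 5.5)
Your reduction to two frozen vertices, your check that completeness persists, and the passage to brog, vortex-free quivers via \Cref{thm: brog} and \Cref{cor: brog forever} are fine. Your blue case is exactly \Cref{lem: brog colors stay}.

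\textbf{The gap is the orange case.} Global reversal of arrows swaps red and green but sends blue vertices to blue and orange to orange: reverse the cycle $b\to r\to o\to g\to b$ of \Cref{fig: brog order} and rename. Relabeling $u,v$ changes no colors. So you cannot reduce an orange $j$ to a blue one.

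Worse, the claim you need is false for orange $j$, and no casework with vortex-freeness can rescue it. This is precisely the configuration in the proof of \Cref{lem: orange away}. Take mutable $j,k$ and frozen $u,v$ with $u\stackrel{1}{\to}j\stackrel{1}{\to}v$, $j\stackrel{3}{\to}k$, $k\stackrel{1}{\to}u$, $k\stackrel{1}{\to}v$.
\begin{itemize}
\item This quiver is complete and vortex-free, since there are only two mutable vertices.
\item Vertex $k$ is green, and $j$ is forced to be orange, since blue would need $b_{jk}\le 0$.
\item Vertex $j$ is cycle-preserving: the only oriented $3$-cycle through it is on $\{j,k,u\}$, and it stays oriented.
\item Yet $\mu_j$ produces $u\stackrel{2}{\to}k\stackrel{1}{\to}v$, so $k$ loses its color.
\end{itemize}
One can also check that $\overline{jk}$ is reduced, weakly balanced and cycle-preserving on this quiver. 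So the loss of color genuinely occurs along sequences satisfying all hypotheses of the theorem. Hence your assertion that red and green vertices stay so at every later step is false, and counting from first occurrences does not work.

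\textbf{The missing idea} is that orange mutations cannot be controlled, but they happen only finitely often.
\begin{itemize}
\item By \Cref{lem: orange away}, after a cycle-preserving, non-apex mutation at a non-sign-coherent vertex $j$, you may recolor so that $j$ is the only orange vertex. Reducedness forbids mutating at $j$ next.
\item Mutations at sign-coherent vertices create no new orange vertices.
\item Together these give \Cref{lem: orange is rare}: only finitely many (at most two) mutations of $\mathbf M$ occur at orange vertices.
\end{itemize}
After the last such mutation, every mutation is at a red, green, or blue vertex. Then \Cref{lem: skip colored} and \Cref{lem: brog colors stay} make the set of sign-coherent vertices nondecreasing. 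You finish with \Cref{lem:mutation makes more color}, applied to occurrences of each vertex after that point; these exist because weak balance gives infinitely many occurrences.

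A minor point: in the one-frozen-vertex case, do not attach an arbitrary dummy frozen vertex, since this can break cycle preservation. Instead note that completeness, which you showed persists, already makes every mutable vertex red or green.
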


\begin{lem}
\label{lem: brog colors stay}
Let $Q$ be a complete brog quiver with $j$ a cycle-preserving blue vertex.
Let~$k$ be another mutable vertex. 
If $k$ is red or green in $Q$ then it is also red or green in~$\mu_j(Q)$.
\end{lem}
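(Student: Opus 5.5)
The plan is to follow the proof of \Cref{lem: colors stay}. Here the brog orientation conditions take over the role that the ice fork inequalities played there. By the definition of a brog coloring, a blue vertex is neither red nor green, since the red and green color classes are exactly the sign-coherent vertices. So $j$ is not sign-coherent and we cannot quote \Cref{lem: skip colored}. Instead we will argue directly from the mutation rule in Definition~\ref{def:mutation}.

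First, fix a mutable $k \neq j$ that is red. Brog condition (3) gives $b_{jk} \geq 0$, and completeness of $Q$ upgrades this to $j \rightarrow k$. Mutating at $j$ changes an arrow between $k$ and a frozen vertex $w$ only through length-two paths through $j$ with endpoints $k$ and $w$. Since $j \rightarrow k$, the only such paths are $w \rightarrow j \rightarrow k$, and each adds $b_{wj}b_{jk} > 0$ to $b_{wk}$. Since $k$ is red, $b_{wk} \geq 0$ already for every frozen $w$, so adding positive quantities keeps it nonnegative. The arrows reversed by the mutation all involve $j$, so none of them is an arrow between $k$ and a frozen vertex. Also, because $k$ was red it had at least one incoming arrow from a frozen vertex. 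That weight can only grow, so $k$ remains adjacent to a frozen vertex. Hence $k$ is still red in $\mu_j(Q)$.

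Second, if $k$ is green, brog condition (3) gives $b_{jk} \leq 0$, and completeness gives $k \rightarrow j$. The relevant paths are now $k \rightarrow j \rightarrow w$, which only add to $b_{kw}$, pushing it further in the green direction. By the same reasoning as in the first step, $k$ stays green. Alternatively this case follows from the red case by global reversal of arrows, which swaps red and green but fixes the blue class.

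There is essentially no obstacle. The one point to be careful about is the appeal to completeness to rule out $b_{jk} = 0$, together with the check that mutation at $j$ never reverses or cancels an arrow between $k$ and a frozen vertex. The cycle-preserving hypothesis does not seem to be needed for this lemma. It is presumably included for uniformity with the companion lemmas used in the proof of \Cref{thm: brog coherence}.
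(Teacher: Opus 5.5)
Your proof is correct and is essentially the paper's argument. The paper omits the proof as identical to that of \Cref{lem: colors stay}, where the orientation between $j$ and $k$ had to be derived; here brog condition (3) supplies it directly, so $k$ is a sink (resp.\ source) relative to $j$ and the frozen vertices, and mutation at $j$ only adds arrows in the coherent direction. Your remark that cycle-preservation is unused is right, and completeness is not needed either: if $b_{jk}=0$, the arrows between $k$ and the frozen vertices are left unchanged.
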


The proof is essentially identical to \Cref{lem: colors stay}, we omit it.
%\begin{proof} There is nothing to show if $j$ is sign-coherent. Assumptions exactly imply that $k$ is a source/sink in the $u,v,j,k$ subquiver, so we don't change it. \end{proof}

\begin{lem}
\label{lem: orange away}
Let $Q$ be a complete brog quiver and $j$ be both cycle-preserving and not the apex of a vortex.
If $j$ is not sign-coherent, then we may color $Q' = \mu_{j}(Q)$ so that only $j$ is orange.
\end{lem}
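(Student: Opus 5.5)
Throughout, as in the surrounding discussion, $Q$ has exactly two frozen vertices $u,v$. The plan is to decide the colors of $Q'=\mu_j(Q)$ directly from orientations, using two facts about $Q'$. The first is that $j$ is not the elbow of any acyclic $3$-vertex subquiver of $Q'$, since $j$ is cycle-preserving. The second is \Cref{prop: cycle preserving is like a por}: every oriented $3$-cycle of $Q'$ passes through $j$. I will apply this to $3$-cycles that contain frozen vertices as well; this is harmless because its proof only uses completeness and the vortex-free hypothesis at $j$. \Cref{thm: brog} already tells us $Q'$ is brog, but I will build the required coloring by hand rather than modify an existing one.

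\emph{Setup.} Since $j$ is not sign-coherent in $Q$, after possibly swapping $u$ and $v$ we have $u\to j\to v$ in $Q$, and hence $v\to j\to u$ in $Q'$. So $j$ is not sign-coherent in $Q'$.

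\emph{Step 1: every other non-sign-coherent vertex is complementary to $j$.} Let $k\neq j$ be mutable and neither red nor green in $Q'$. Suppose $k$ had the same frozen orientation as $j$, namely $v\to k\to u$. If $j\to k$, then $v\to j\to k$ together with $v\to k$ makes $j$ the elbow of $Q'|_{jkv}$. If $k\to j$, then $k\to j\to u$ together with $k\to u$ makes $j$ the elbow of $Q'|_{jku}$. Either way we contradict cycle-preservation. Hence every such $k$ satisfies $u\to k\to v$ in $Q'$, and so is complementary to $j$. The same argument shows that any two such vertices $k,k'$ both have $u\to\cdot\to v$.

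\emph{Step 2: color $j$ orange and all other non-sign-coherent vertices blue.} Conditions (1), (2) and (5) are then immediate from Step 1. For condition (4) at $j$, take a red $w$, so $u\to w$ and $v\to w$. If $j\to w$, then $v\to j\to w$ with $v\to w$ makes $j$ an elbow, which is impossible; hence $w\to j$. The green case is the mirror image under global reversal, giving $j\to w$.

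For condition (3) at a blue $k$ and a red $w$, suppose instead $w\to k$. Then $w\to k\to v\to w$ is an oriented $3$-cycle of $Q'$ that avoids $j$, contradicting \Cref{prop: cycle preserving is like a por}. Hence $k\to w$. For a green $w$ we have $w\to u\to k$, and if $k\to w$ this again gives a $3$-cycle avoiding $j$. Hence $w\to k$.

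\emph{Main obstacle.} The step I expect to need the most care is the appeal to \Cref{prop: cycle preserving is like a por} for $3$-cycles through a frozen vertex. I will recheck its first paragraph to confirm it applies verbatim to such cycles. It uses only the $2$-paths through $j$ forced by $j$ not being an apex, and $Q$ is complete, so every relevant arrow exists. If some degenerate configuration slips through, the fallback is a direct case analysis in $Q$ before mutation. Track how the arrow between $k$ and $w$ changes under $\mu_j$, using the brog orientations of $Q$ and the fact that $j$ is not the apex of the relevant $4$-vertex subquiver $Q|_{jkwv}$ or $Q|_{jkwu}$.
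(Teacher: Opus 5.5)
Your proof is correct, but it follows a different route from the paper's.

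\textbf{The paper's route.} The paper first invokes \Cref{thm: brog} to know that $Q'$ is brog. It then reduces to showing that $j\to k$ for green $k$, that $k\to j$ for red $k$, and that every other vertex is complementary to $j$ in $Q'$. It proves these by a case analysis on the status of $k$ in $Q$: red or green in $Q$ with $j$ blue or orange there, complementary to $j$, or sharing $j$'s frozen orientation. In each case it computes how $\mu_j$ changes the arrows among $j,k,u,v$, using cycle-preservation wherever an oriented cycle appears.

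\textbf{Your route.} You work entirely inside $Q'$ with two facts: $j$ is never an elbow in $Q'$, and every oriented $3$-cycle of $Q'$ passes through $j$. So you need neither \Cref{thm: brog} nor the hypothesis that $Q$ itself is brog. Your Step~1 and your check of condition (4) replace the paper's case analysis.

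Your check of condition (3) for the blue vertices makes explicit something the paper's reduction leaves to \Cref{thm: brog} and the remark after it. Knowing only that $Q'$ is brog, another non-sign-coherent vertex $k$ complementary to $j$ could a priori satisfy $r\to k$ for a red $r$, and would then be forced orange. It is \Cref{prop: cycle preserving is like a por}, applied to the cycle $r\to k\to v\to r$, that excludes this.

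The step you flagged as the main obstacle is fine. In the paper's conventions an oriented $3$-cycle has at most one frozen vertex. Together with $j$ it therefore spans a $4$-vertex subquiver with three mutable vertices, to which the vortex hypothesis applies. So the proposition covers your cycles verbatim.

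\textbf{One thing to add.} You should state that $Q'$ is complete. An arrow between $a,b\neq j$ can only vanish by cancellation in an oriented cycle $a\to j\to b\to a$, which cycle-preservation forbids. You use completeness silently when you assert $u\to k\to v$ for every non-sign-coherent $k$, and $u\to w$, $v\to w$ for every red $w$.

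What the paper's route buys in exchange is a record of how each vertex's color changes from $Q$ to $Q'$.
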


(It is possible that there are no sign-coherent vertices in $Q'$, in which case there is a second coloring where $j$ is blue and all other vertices are orange.) %this is silly - don't pick that coloring. But currently I can't stop you.

\begin{proof}
By \Cref{thm: brog} the quiver $Q'$ is brog. 
So it suffices to show that for any green (resp. red) vertex $k$ will have $k \leftarrow j$ (resp. $k \rightarrow j$) in $Q'$, and all other vertices are complementary to $j$ in $Q'$.
Without loss of generality, say $u,v$ are frozen with $u \rightarrow j \rightarrow v$ in $Q$ (and thus $u \leftarrow j \leftarrow v$ in $Q'$).

First, suppose $k$ is green in $Q$. If $j$ is blue in $Q$, then $k$ is green and $j \rightarrow k$ in $Q'$. 
If instead~$j$ is orange then we know the following orientations in $Q$:
\begin{center}
        
\begin{tikzpicture}[->, >={Stealth[round]}, node distance=3cm and 4cm, main/.style = {circle, draw, fill=none, minimum size=6mm, inner sep=0pt}, every edge/.append style={thick}, square/.style={regular polygon,regular polygon sides=4}]

\node[main] (j) {$\boldsymbol{j}$};
\node[draw, square, inner sep=.8mm] (v) [right=2cm of j] {$\boldsymbol{v}$};
\node[main] (k) [below=2cm of v] {$\boldsymbol{k}$};
\node[draw, square, inner sep=.8mm] (u) [below=2cm of j] {$\boldsymbol{u}$};

\path 
    (u) edge (j)
    (j) edge (v)
    (j) edge (k)
    (k) edge (u)
    (k) edge (v);
\end{tikzpicture}
%\label{fig:orientations with j uncolored}

\end{center}
Because $j$ is cycle-preserving, in $Q'$ we have $u \rightarrow k \rightarrow v$, so $k$ is complementary to $j$ in~$Q'$. Reversing all arrows gives the same result when $k$ is red.

Suppose $k$ is complementary to $j$ in $Q$. 
If $k \leftarrow j$ in $Q$ then $k$ is red with $k \rightarrow j$ in~$Q'$.
Similarly, if $k \rightarrow j$ in $Q$ then $k$ is green with $k \leftarrow j$ in~$Q'$.
In either case, this implies $j$ is orange in $Q'$.

Finally, if $u \rightarrow k \rightarrow v$ in $Q$ then we still have $u \rightarrow k \rightarrow v$ in $Q'$. % (we can only add arrows in the same directions, no oriented 2-cycles are formed in the mutation process).
So $k$ is complementary to~$j$ in $Q'$.
\end{proof}

\begin{rem}
\Cref{cor: brog forever} and \Cref{lem: orange away} show ways that brog quivers under cycle-preserving mutations behave like forks, with orange vertices acting like the point of return. See also \Cref{rem: ice forks and coloring}.
\end{rem}

\begin{lem}
\label{lem: orange is rare}
Let $Q$ be a complete brog quiver and $\mathbf M=m_1m_2\cdots$ a reduced cycle-preserving mutation sequence with $m_1$ not the apex of a vortex.
Then there are at most two $j$ such that~$m_j$ is an orange vertex in~$Q_{\mathbf M}^{(j-1)}$.
\end{lem}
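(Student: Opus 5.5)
We track orange mutations along $\mathbf M$ using \Cref{cor: brog forever} and \Cref{lem: orange away}. By \Cref{cor: brog forever}, every $Q^{(j)}_{\mathbf M}$ is brog, and it is vortex-free for $j>0$. So every mutation $m_j$ with $j \geq 2$ is cycle-preserving and not at the apex of a vortex. Completeness is preserved along a cycle-preserving sequence, since arrows between mutable vertices and $m_j$ are only reversed and 2-cycle cancellation among the others would contradict cycle preservation. Hence \Cref{lem: orange away} applies at every step.

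The first orange mutation may occur at step $1$, since $Q$ itself is an arbitrary brog quiver. Suppose $m_j$ is orange in $Q^{(j-1)}_{\mathbf M}$. Then $m_j$ is not sign-coherent, so by \Cref{lem: orange away} we can color $Q^{(j)}_{\mathbf M}$ so that $m_j$ is the only orange vertex. The key claim is then an invariant. \textbf{Claim:} if after some step $j$ the vertex $m_j$ is the unique orange vertex and all others are blue, red, or green, then for every later $\ell>j$ the vertices other than $m_\ell$ (possibly together with the previous unique orange vertex) remain non-orange. More concretely, I will show that after mutating at a blue vertex $m_\ell \neq$ the orange vertex, the coloring can be chosen so that $m_\ell$ stays blue or becomes red or green, and the old orange vertex stays the unique orange one. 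This is checked directly on four-vertex subquivers $\{m_\ell, k, u, v\}$, exactly as in \Cref{lem: colors stay} and \Cref{lem:mutation makes more color}. If $m_\ell$ is blue with $u\to m_\ell \to v$, then any orange vertex $k$ is complementary, so $v\to k\to u$. Cycle-preservation at $m_\ell$ then forces the orientation between $m_\ell$ and $k$ in a way that keeps the complementary pair and the red/green relations intact.

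Given the claim, a second orange mutation can only occur at the unique orange vertex $o$. Afterwards \Cref{lem: orange away} again makes $o$ the only orange vertex. The real content is that this second mutation cannot repeat. Since $\mathbf M$ is reduced, the next mutation is not at $o$, and it is at a blue or sign-coherent vertex. I expect to show that after an orange mutation followed by any non-orange mutation, the vertex $o$ becomes red or green. This follows by the same diagram chase as \Cref{lem:mutation makes more color}, using cycle-preservation at both steps and completeness with two frozen vertices. Once $o$ is sign-coherent, \Cref{lem: skip colored} and \Cref{lem: brog colors stay} keep it sign-coherent. The claim also shows that no new orange vertex is ever created, because only the mutated non-sign-coherent vertex can become orange, and that vertex must already be orange. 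So at most one orange mutation happens after the first stabilization, which gives the bound of two in total (one possibly at $m_1$, one more at the unique orange vertex).

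The main obstacle is the invariant in the claim: showing that mutation at a blue (non-orange) vertex never makes any other vertex orange. This requires ruling out that a red/green vertex turns non-sign-coherent with the wrong orientation, and that a blue vertex loses complementarity with the orange one. I would first try to derive it from the forbidden configurations in the proof of \Cref{thm: brog}, where grey vertices $i,i'$ are not complementary. A new orange vertex would produce a pair of non-complementary grey vertices, namely the new orange vertex and a blue one, and that is impossible after a cycle-preserving non-apex mutation.
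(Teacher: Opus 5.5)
Your key invariant is false. It contradicts \Cref{lem: orange away}, which you cite yourself: that lemma applies to any non-sign-coherent mutated vertex, blue ones included, and says the mutated vertex becomes the unique orange vertex.

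Concretely, let $o$ be orange with $v\to o\to u$, and mutate at a blue vertex $b$ with $u\to b\to v$.
\begin{itemize}
\item If $b\to o$, cycle-preservation of $u\to b\to o\to u$ forces $u\to o$ afterwards. So $o$ turns red, and the reversed arrow $o\to b$ forces $b$ to be orange.
\item If $o\to b$, cycle-preservation of $o\to b\to v\to o$ forces $o\to v$. So $o$ turns green, and $b\to o$ again forces $b$ to be orange.
\end{itemize}
So ``$m_\ell$ stays blue and the old orange vertex stays the unique orange one'' fails every time, and each blue mutation creates a new orange vertex. Consequently your conclusions that a second orange mutation can only occur at $o$, and that no new orange vertex is ever created, do not hold. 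No appeal to the forbidden configurations of \Cref{thm: brog} can rescue them. You also never analyze mutations at red or green vertices, which is where the real content of the lemma lies.

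The missing idea is to use reducedness at every step, not just once.
\begin{itemize}
\item After a mutation at a non-sign-coherent $m_\ell$, \Cref{lem: orange away} says the only orange vertex is $m_\ell$ itself. Since $\mathbf M$ is reduced, $m_{\ell+1}\neq m_\ell$, so the next mutation is not orange.
\item It remains to show that a mutation at a sign-coherent vertex $j$ (say green) leaves no orange vertex at all. Sign-coherent vertices stay sign-coherent by \Cref{lem: skip colored}. A non-sign-coherent $k$ with $k\to j$ becomes green. A non-sign-coherent $k$ with $j\to k$ keeps its frozen arrows, but now has $k\to j$ with $j$ red, which forbids $k$ from being orange.
\end{itemize}
This is exactly the paper's argument. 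By induction, with these colorings an orange mutation can only occur at $m_1$. The statement's bound of two leaves slack for the coloring ambiguity noted after \Cref{lem: orange away}; it is not because the original orange vertex gets mutated again later, as your count assumes.
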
 %why 2 and not 1? Same technicality as in \Cref{lem: orange away}

\begin{proof}
%Without loss of generality, $m_1$ is orange in $Q = Q^{(0)}$.
By \Cref{lem: orange away}, it suffices to show that mutations at sign-coherent vertices do not introduce new orange vertices.
So let $j=m_i$ be sign-coherent in~$Q_{\mathbf M}^{(i-1)}$ and fix another mutable vertex $k$; we will show $k$ is not orange in~$Q_{\mathbf M}^{(i)}$. 
By \Cref{lem: skip colored}, we reduce to the case where $k$ is not sign-coherent.
Without loss of generality, say $j$ is green in~$Q_{\mathbf M}^{(i-1)}$ and pick frozen vertices so that $u \rightarrow k \rightarrow v$.
If $j \rightarrow k$ in~$Q_{\mathbf M}^{(i-1)}$ then $j$ is red in $Q_{\mathbf M}^{(i)}$ with $j \leftarrow k$, so $k$ is not orange in $Q_{\mathbf M}^{(i)}$. %indeed, k is blue. but we needn't look at the other arrows in this case.
If $j \leftarrow k$ in~$Q_{\mathbf M}^{(i-1)}$ then $k$ is green in $Q_{\mathbf M}^{(i)}$.
\end{proof}

\begin{proof}[{(Proof of \Cref{thm: brog coherence})}]
It suffices to prove the claim when $Q$ has exactly two frozen vertices, as a quiver is sign-coherent whenever all subquivers with $2$ frozen vertices are. 
By \Cref{cor: brog forever}, each $Q_{\mathbf M}^{(i)}$ is brog and vortex-free for $i >0$, thus each $m_i$ is cycle-preserving and not the apex of a vortex in $Q_{\mathbf M}^{(i-1)}$.
By \Cref{lem: orange is rare}, there is some $k$ such that all mutations $m_j$ are not at orange vertices for $j > k$. Let $R = Q^{(k)}$ and $\mathbf M' = m_{k+1} m_{k+2} \cdots$, so all mutations in $\mathbf M'$ are at blue, green, or red vertices.
The set of sign-coherent vertices in $R_{\mathbf M'}^{(i)}$ only increases with $i$ by \Cref{lem: skip colored} and \Cref{lem: brog colors stay}.
The sequence $\mathbf M'$ is weakly balanced because $\mathbf M$ is. 
So there is some $\ell$ such that each mutable vertex appears in $\{m_{k+1},m_{k+2},\ldots ,m_{k+\ell}\}$. 
By \Cref{lem:mutation makes more color}, $R_{\mathbf M'}^{(\ell+1)}$ is sign-coherent (and thus so are all $Q_{\mathbf M}^{(j)}$ for $j > \ell+k$).
\end{proof}

\section{Low Rank Quivers}
\label{sec:low rank}

We now prove the asymptotic sign coherence conjecture for mutation-abundant (\Cref{def:mu-abundant}) rank $3$ quivers.  Previously the only rank $3$ case where the asymptotic sign coherence conjecture was the mutation sequence $\overline{123}$ on the Markov quiver, as shown by Gekhtman-Nakanishi \cite[Section 4]{GekhtmanNakanishi}.  We additionally recover their result that the asymptotic sign coherence conjecture holds for rank $2$ quivers.  We then discuss applications to quivers whose mutation class has finite ice-forkless part.

Note that the cases in which Gekhtman and Nakanishi proved their asymptotic sign coherence conjecture all have mutation-finite mutable part, so the results of \Cref{sec:wander} do not apply. We instead use the brog quivers introduced in \Cref{sec:brog} and ideas from \Cref{sec:fork eventual coherence} to establish sign coherence results in this setting.  

Recall we use $b_{ij}(Q)$ to denote the number of arrows oriented $i \rightarrow j$ minus the number of arrows oriented $j \rightarrow i$ in $Q$.
The observations in \Cref{lem: rank 2 ascents} are not new, but will be useful throughout this section. 

\begin{lem}
\label{lem: rank 2 ascents}
Let $Q$ be a rank $2$ quiver with one frozen vertex $u$, and $b_{12}(Q) \geq 2$.
Suppose that mutation at $1$ increases the number of arrows between $2$ and $u$ (ie. $|b_{2u}(\mu_1(Q))| > |b_{2u}(Q)|$). 
Then: 
\begin{enumerate}
    \item $\mu_1(Q)$ is an oriented cycle,
    \item $|b_{u1}(Q) b_{12}(Q)| > 2 |b_{u2}(Q)|$, and 
    \item mutation at $2$ in $\mu_1(Q)$ increases the number of arrows between $1$ and $u$.
\end{enumerate} % 
%pf: we have 
%$$b_{2u}(\mu_1(Q)) = b_{2u}(Q) + b_{21}(Q)b_{1u}(Q) > |b_{2u}(Q)|.$$
%If $b_{2u}(Q) > 0$, then $b_{2u}(\mu_1(Q)) > b_{u1}(\mu_1(Q)) (= b_{1u}(Q))$ from the equality.
%If instead we have $b_{2u}(Q) < 0$ then 
%$$b_{21}(Q)b_{1u}(Q) = b_{2u}(\mu_1(Q)) + b_{u2}(Q) < 2 b_{2u}(\mu_1(Q)).$$
%As $Q^{mut}$ is mutation cyclic, $2 \leq b_{21}(Q)$.
%Therefore again we see $b_{2u}(\mu_1(Q)) > b_{u1}(\mu_1(Q))$.
%In either case, mutation at $2$ increases the number of arrows between $u$ and $1$.
%d = ac-b > b > 0, a >= 2
%d + b = ac > 2c
% if c is less than the average of d,b and d > b, then d > c.
If instead mutation at $1$ preserves the number of arrows (ie. $|b_{2u}(\mu_1(Q))| = |b_{2u}(Q)|$) then either:
\begin{enumerate}
    \item[(a)] $1$ is a source/sink, or
    \item[(b)] $Q$ is an oriented cycle and $|b_{u1}(Q)b_{12}(Q)| = 2|b_{2u}(Q)|$.
\end{enumerate}
In either case, mutation at $2$ in $\mu_1(Q)$ does not decrease the number of arrows between $1$ and~$u$.
\end{lem}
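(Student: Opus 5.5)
The plan is a direct computation with the signed adjacency entries. Since $Q$ has rank $2$ and one frozen vertex, it is determined by three integers: $a = b_{12}(Q) \geq 2$, $x = b_{1u}(Q)$ and $w = b_{u2}(Q)$. By \Cref{def:mutation}, mutating at $1$ changes the arrow between $2$ and $u$ only through $2$-paths passing through $1$. Because $1 \to 2$, the only such path is $u \to 1 \to 2$, which exists exactly when $x<0$. I will write $c = |x|$ when $x<0$. The first step is therefore a dichotomy:
\begin{itemize}
\item If $x \geq 0$, then $1$ is a source, and $b_{u2}$ is unchanged. This gives case (a).
\item If $x<0$, then $b_{u2}(\mu_1(Q)) = w' := w + ca$, while the arrows at $1$ simply reverse, so $2 \to 1$ with weight $a$ and $1 \to u$ with weight $c$.
\end{itemize}

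Next I compare $|w'|$ with $|w|$ when $x<0$, splitting on the sign of $w$.
\begin{itemize}
\item If $w \geq 0$, then $Q$ is acyclic with elbow $1$, the number of arrows strictly increases, and $\mu_1(Q)$ is the oriented cycle $u \to 2 \to 1 \to u$.
\item If $w<0$, then $Q$ is the oriented cycle $u \to 1 \to 2 \to u$, and $|w'| \gtreqless |w|$ according as $ca \gtreqless 2|w|$.
\end{itemize}
Consequently, in the increasing case $\mu_1(Q)$ is always an oriented cycle, which is (1). The inequality $ca > 2|w|$, which is (2), is read off in the cyclic case. In the equality case we get precisely the oriented cycle with $ca = 2|w|$, which is (b). Since mutation commutes with global reversal of arrows, the case $b_{12} \le -2$ needs no separate treatment.

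For (3) and the final assertion, I compute the effect of mutating $\mu_1(Q)$ at $2$. There the relevant $2$-path is $u \to 2 \to 1$, with weights $|w'|$ and $a$. The new weight is $b_{u1} = -c + |w'|a$, so the number of arrows between $1$ and $u$ increases as soon as $|w'|a > 2c$. Since $a \ge 2$, it suffices to show $|w'| > c$.
\begin{itemize}
\item If $w \ge 0$: $|w'| = w + ca \ge 2c > c$.
\item If $w<0$ and $ca > 2|w|$: $|w'| = ca - |w| > ca/2 \ge c$.
\item In the equality case (b): $|w'| = |w| = ca/2 \ge c$, so $|w'|a \ge 2c$ and the count does not decrease.
\item In case (a): vertex $1$ is a sink in $\mu_1(Q)$, so mutating at $2$ adds nothing to the arrow between $1$ and $u$.
\end{itemize}

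The main obstacle is bookkeeping for item (2) in the acyclic subcase $w \ge 0$. There the raw inequality $ca > 2|w|$ need not hold for the original quiver: for example $c = 1$, $a = 2$, $w = 5$. I would therefore check whether (2) is meant for the post-mutation weights, where $|b_{1u}(\mu_1 Q)|\,a > 2c$ always holds, or only for the oriented-cycle case. The key inequality actually used downstream is $|w'| > c$, and I would state and prove that explicitly, so that (3) does not depend on how (2) is read.
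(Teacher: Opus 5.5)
Your direct computation is the same case analysis the paper has in mind. The paper writes out no proof, but the sketch left in its source splits on the sign of $b_{2u}(Q)$, shows $|b_{2u}(\mu_1 Q)|>|b_{1u}(\mu_1 Q)|$, and uses $|b_{12}|\ge 2$ to get (3). Your treatment of (1), (3), (a) and (b) is correct.

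Your objection to (2) is also correct. Take $b_{12}=2$, $u\xrightarrow{1}1$, $u\xrightarrow{5}2$: mutation at $1$ raises the $u$--$2$ weight from $5$ to $7$, yet $2>10$ fails. So (2) holds only in the oriented-cycle subcase. The one place it is invoked (Case 2a of \Cref{lem:weakly balanced grows}) needs only $|b_{1u}(\mu_1 Q)|<|b_{2u}(\mu_1 Q)|$, which is your $c<|w'|$, and you prove that.

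Two corrections are needed.

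First, in case (a) the sentence ``$1$ is a sink in $\mu_1(Q)$, so mutating at $2$ adds nothing'' is false when $w>0$. Then $\mu_1(Q)$ contains the path $u\to 2\to 1$, and mutation at $2$ adds $wa$ arrows $u\to 1$. The conclusion survives, but for a different reason. Since $1$ is a sink, the existing arrows between $u$ and $1$ point $u\to 1$. The only $2$-path through $2$ that touches $1$ is $u\to 2\to 1$, so any new arrows have the same orientation and nothing cancels. Concretely, the weight goes from $x$ to $x+a\max(w,0)$.

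Second, in your last paragraph ``$|b_{1u}(\mu_1 Q)|\,a>2c$'' should read ``$|b_{2u}(\mu_1 Q)|\,a>2c$''. Since $|b_{1u}(\mu_1 Q)|=c$, the inequality as written says $ca>2c$, which fails when $a=2$.
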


\subsection{Rank 3 Quivers}
\label{subsec:rank3 asymptotic}

\begin{defn}
\label{def:mu-abundant}
A quiver $Q$ is \emph{mutation-abundant} if every weight in every mutation equivalent quiver is at least $2$.
\end{defn}

Our next goal is to prove the following theorem.  

\begin{thm}
\label{thm: mutation cyclic eventually sign coherent}
Let $Q$ be a connected rank $3$ quiver whose mutable part is mutation-abundant, and $\mathbf M$ be a reduced weakly balanced mutation sequence which is cycle-preserving for $Q^{mut}$. %alternative/stronger: such that $Q$ mut i is an abundant oriented cycle for all i.
Then $Q$ is eventually sign-coherent on $\mathbf M$.
\end{thm}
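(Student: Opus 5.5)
The plan is to reduce to a tail of $\mathbf M$ on which the hypotheses of \Cref{thm: brog coherence} hold for the \emph{whole} quiver (frozen vertices included), and then apply that theorem. First, a quiver is sign-coherent as soon as each of its subquivers with at most two frozen vertices is, and restricting to a set of frozen vertices commutes with mutation. So I may assume $Q$ has at most two frozen vertices $u,v$; the case of a single frozen vertex is handled the same way, or by duplicating it. Since eventual sign coherence only concerns a tail of the sequence, I am also free to discard any finite prefix of $\mathbf M$.

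Step 1 is to control the mutable part. Since $Q^{\mut}$ is mutation-abundant, every $Q_{\mathbf M}^{(i)\,\mut}$ is complete with all weights at least $2$. Since $\mathbf M$ is cycle-preserving for $Q^{\mut}$, the vertex $m_i$ is never the elbow of an acyclic $Q_{\mathbf M}^{(i)\,\mut}$. I will use this to show that from $i=1$ on every $Q_{\mathbf M}^{(i)\,\mut}$ is an abundant oriented $3$-cycle. Moreover, mutating a cycle $j\to k\to \ell\to j$ at $j$ while keeping it a cycle forces $b_{jk}b_{\ell j}>2b_{k\ell}$, so that the new weight $b_{\ell k}$ is strictly larger than the old one. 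Consequently the mutable weights increase strictly (along each edge that is replaced) and tend to infinity along $\mathbf M$, because $\mathbf M$ is weakly balanced.

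Step 2 handles the frozen vertices, working in each rank $2$ subquiver $Q_{\mathbf M}^{(i)}|_{jku}$ with $b_{jk}\ge 2$. I will use \Cref{lem: rank 2 ascents} to show a monotonicity property: once a mutation at $j$ increases $|b_{ku}|$, the subsequent mutation at $k$ increases $|b_{ju}|$, and in all other cases (sink/source, or the equality case (b)) no frozen weight decreases. Combined with the growth of $b_{jk}$ from Step 1 and weak balancedness, I aim to prove that after finitely many steps both of the following hold.
\begin{itemize}
\item[(i)] Every frozen weight $b_{ju},b_{jv}$ is nonzero, so the quiver is complete.
\item[(ii)] Whenever $Q|_{jku}$ is an oriented cycle and we mutate at $j$, we have $|b_{jk}b_{uj}|>2|b_{ku}|$, so the cycle is preserved.
\end{itemize}
The heuristic is that the mutable weights eventually dominate the frozen weights in the relevant products, as in conclusion (2) of \Cref{lem: rank 2 ascents}. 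Together with Step 1, condition (ii) makes the tail of $\mathbf M$ cycle-preserving for the full quiver.

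Step 3 concludes. Pick a time $k$ after which (i) and (ii) hold and such that $m_{k+1}$ is not the apex of a vortex in $Q_{\mathbf M}^{(k)}$. Such a time exists: by \Cref{prop: cycle preserving is like a por}, after one cycle-preserving non-apex mutation the quiver stays vortex-free. In rank $3$ with two frozen vertices, a vortex needs three mutable vertices, so its apex is frozen or its oriented cycle contains a frozen vertex; I expect a short case check to show that the oriented-cycle structure of the mutable part rules out a mutable apex after finitely many steps. Then \Cref{thm: brog coherence} applies to $Q_{\mathbf M}^{(k)}$ with the reduced, weakly balanced, cycle-preserving tail $m_{k+1}m_{k+2}\cdots$, and gives eventual sign coherence.

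The main obstacle is Step 2(ii): proving that cycle-preservation in the triangles containing a frozen vertex eventually holds. A frozen weight could grow at the same rate as the mutable weights, so I would first try a potential-function argument. Concretely, I would compare $|b_{ju}|$ with $|b_{jk}|$ and show that the ratio $|b_{ju}|/|b_{jk}|$ is eventually bounded well below $\tfrac{1}{2}\min b$. This uses that the mutable weights grow multiplicatively (roughly by a factor of at least $b-1\geq 1$ in the Markov-type recursion), while \Cref{lem: rank 2 ascents} only allows the frozen weights to grow linearly in the mutable ones. If this fails, the fallback is to use \Cref{lem: skip colored} and \Cref{lem:mutation makes more color} directly, and to show that non-cycle-preserving frozen triangles can occur only finitely often.
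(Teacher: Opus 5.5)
Your outer skeleton is the paper's: reduce to two frozen vertices, find a complete tail that is cycle-preserving for the whole quiver and starts at a non-apex, then apply \Cref{thm: brog coherence}. The gap is in how you produce that cycle-preserving tail.

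\textbf{Step~1 is false under the theorem's hypotheses.} Keeping a mutable cycle $j\to k\to\ell\to j$ oriented only requires $b_{jk}b_{\ell j}>b_{k\ell}$, not $>2b_{k\ell}$, and mutable weights need not grow at all. Two cases inside the theorem show this.
\begin{itemize}
\item The Markov quiver is mutation-abundant, and every mutation of it is cycle-preserving. There $b_{jk}b_{\ell j}=4=2b_{k\ell}$, and all weights stay $2$ forever. This is the Gekhtman--Nakanishi example the paper recovers.
\item For an abundant acyclic mutable part such as $1\to 2\to 3$, $1\to 3$, the source sequence $\overline{123}$ is reduced, balanced and vacuously cycle-preserving. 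Yet the mutable part stays acyclic with constant weights, so it is never an oriented cycle.
\end{itemize}

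\textbf{Step~2 therefore has no footing, and its heuristic points the wrong way.}
\begin{itemize}
\item On the Markov quiver your target $|b_{ju}|/|b_{jk}|<\tfrac12\min b=1$ would force every frozen weight to be at most $1$. But along $\overline{123}$, starting from $(b_{1u},b_{2u},b_{3u})=(1,0,0)$, one reaches $(-33,56,-22)$ after nine mutations.
\item In general $b'_{ku}-b_{ku}\in\{0,\pm b_{kj}b_{ju}\}$. Frozen weights get multiplied by mutable ones, so there is no reason for the ratio to tend to $0$.
\item \Cref{lem: rank 2 ascents} is a rank-$2$ statement. In rank $3$, a mutation at the third vertex typically falls between consecutive mutations at $j$ and $k$ and changes $b_{ju}$, $b_{ku}$ and $b_{jk}$.
\item Nothing in your sketch excludes weight-decreasing mutations before the ``monotone'' regime begins.
\end{itemize}

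\textbf{The missing idea is to let the frozen weights drive the argument through $u$-ascents, using only $b_{jk}\ge 2$.}
\begin{enumerate}
\item Start at a time where $|b_{1u}|+|b_{2u}|+|b_{3u}|$ is minimal. Weak balance plus a case analysis then produces a $u$-ascent (\Cref{lem:weakly balanced grows}). The cases are: the mutable part is a cycle or the mutated vertex is an elbow; or there are runs of source mutations of the form $\overline{132}$ or $\overline{123}$.
\item After one $u$-ascent, every later mutation adjacent to $u$ is again a $u$-ascent (\Cref{lem: increasing sticky}). This is a genuinely rank-$3$ check.
\item A $u$-ascent that is cycle-preserving on $Q^{\mut}$ is cycle-preserving on the whole quiver, because breaking a frozen triangle forces a frozen weight to drop (\Cref{lem: ascents are cycle preserving}). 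Moreover its result is not a vortex (\Cref{lem: cycle preserving ascents avoid vortices}).
\end{enumerate}
The vortex statement in the last step also repairs your Step~3. Your observation that an oriented mutable cycle rules out a mutable apex is correct, but it does not cover mutable parts that stay acyclic.
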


We do this by showing that eventually the quivers on $\mathbf M$ are complete and all mutations are cycle-preserving.  Both properties are established through using ``ascent'' mutations, which increase the number of arrows adjacent to a chosen frozen vertex.  These properties ensure, using results from \Cref{sec:brog}, that the quiver eventually becomes brog and then sign-coherent.  

\begin{defn}
Suppose $Q$ is a $3$-vertex quiver and $u$ is a frozen vertex in $Q$.
We say a mutable vertex $i$ is a \emph{$u$-ascent} if $|b_{ju}(\mu_i(Q))| \geq |b_{ju}(Q)|$ for all mutable vertices $j$, and at least one inequality is strict.
\end{defn}

\begin{lem}
\label{lem: ascents are cycle preserving}
Let $Q$ be a connected rank $3$ quiver with one frozen vertex $u$. 
If $i$ is a $u$-ascent and cycle-preserving on $Q^{mut}$, then $i$ is cycle-preserving.
\end{lem}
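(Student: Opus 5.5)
The plan is to reduce the claim to a direct computation on the $3$-vertex subquivers through $i$. Since $Q$ has mutable vertices $\{i,j,k\}$ and the single frozen vertex $u$, the $3$-vertex subquivers containing $i$ are $Q|_{ijk}$, $Q|_{iju}$ and $Q|_{iku}$. Cycle-preservation for $Q|_{ijk}$ is exactly the hypothesis that $i$ is cycle-preserving on $Q^{\mut}$. So it suffices to show: if $Q|_{iju}$ is an oriented $3$-cycle for a mutable $j\neq i$, then $\mu_i(Q)|_{iju}$ is again an oriented $3$-cycle. The case $Q|_{iku}$ is identical with $k$ in place of $j$.

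Suppose $Q|_{iju}$ is an oriented cycle. By global reversal of arrows (which commutes with mutation and with the notion of $u$-ascent, since only absolute values of weights enter), we may assume the orientation is $j\to i\to u\to j$. Set $a=b_{ji}(Q)\geq 1$, $b=b_{iu}(Q)\geq 1$ and $c=b_{uj}(Q)\geq 1$, so $b_{ju}(Q)=-c$.

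Mutation at $i$ reverses the arrows $j\to i$ and $i\to u$, giving $i\to j$ and $u\to i$. The path $j\to i\to u$ contributes $ab$ to $b_{ju}$, so
\[
b_{ju}(\mu_i(Q)) = ab-c .
\]
For $\mu_i(Q)|_{iju}$ to be the oriented cycle $u\to i\to j\to u$, we need precisely $ab-c>0$.

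This is where the $u$-ascent hypothesis enters. It gives $|ab-c|\geq c$, since the ascent inequality is required for every mutable $j$, including this one; only one of the inequalities is required to be strict, and we do not need strictness. So either $ab-c\geq c>0$, or $ab-c\leq -c$. The second case forces $ab\leq 0$, which contradicts $a,b\geq 1$. Hence $ab-c\geq c>0$, and the triangle $\mu_i(Q)|_{iju}$ is an oriented cycle, as required.

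I expect no serious obstacle here. The only points needing care are the following.
\begin{itemize}
\item Check that the ascent inequality is indeed available for each mutable $j\neq i$; for $j=i$ it is automatic, since $|b_{iu}|$ is unchanged by mutation at $i$.
\item Note that connectedness of $Q$ is not actually needed beyond guaranteeing that the relevant arrows exist. Absent arrows simply mean there is no oriented cycle to preserve.
\item Confirm that the reversal symmetry used to fix the orientation is legitimate for the frozen arrows as well.
\end{itemize}
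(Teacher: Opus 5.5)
Your proposal is correct and is essentially the paper's argument. The paper runs it as a contrapositive: if $\mu_i(Q)|_{iju}$ fails to be an oriented cycle, then $|b_{ju}|$ strictly drops by $|b_{ji}b_{iu}|$, contradicting the $u$-ascent condition. This is exactly your observation that $|ab-c|\geq c$ forces $ab-c>0$, with the triangle $Q|_{ijk}$ handled by the hypothesis on $Q^{\mut}$ in both versions.
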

\begin{proof}
We show the contrapositive. 
%There are two cases, based on the orientation of $Q^{mut}$.
If $\mu_i(Q)|_{123}$ is acyclic but $Q|_{123}$ is not then $i$ is not cycle-preserving on the mutable part. 
Otherwise there is a $j$ such that $\mu_i(Q)|_{iju}$ is acyclic but $Q|_{iju}$ is not. 
Without loss of generality, say $i=1,j=2$.
So there are arrows between $1$ and $u$.
However, 
$$|b_{2u}(Q)| = |b_{2u}(\mu_1(Q))| + |b_{u1}(\mu_1(Q))b_{12}(\mu_1(Q))| > |b_{2u}(\mu_1(Q))|,$$
so the weight between $u$ and $2$ decreased from $Q$ to $\mu_i(Q)$.
\end{proof}

\begin{lem}
\label{lem: cycle preserving ascents avoid vortices}
Let $Q$ be a rank $3$ quiver with one frozen vertex $u$. 
If $i$ is a $u$-ascent and cycle-preserving, then $\mu_i(Q)$ is not a vortex.
\end{lem}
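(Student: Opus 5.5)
We argue by contradiction. Suppose $i$ is a $u$-ascent and cycle-preserving for $Q$, yet $\mu_i(Q)$ is a vortex. Then $\mu_i(Q)$ has four vertices $1,2,3,u$ with an apex $a$ (a source or sink) and the other three vertices forming an oriented $3$-cycle. By \Cref{prop:apex preserves}, if $a=i$ then $i$ would already be the apex of a vortex in $Q=\mu_i(\mu_i(Q))$. So we split into two cases: $a \neq i$ and $a=i$.

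\emph{Case $a\neq i$.} The oriented $3$-cycle of $\mu_i(Q)$ then contains $i$. We check whether $Q$ itself contains an oriented $3$-cycle through $i$, using the same orientation-tracking as in \Cref{prop: cycle preserving is like a por}. Since $a$ is a source (or sink, by global reversal) in $\mu_i(Q)$, reversing the arrow between $a$ and $i$ gives $i\rightarrow a$ in $Q$ (or $a \rightarrow i$ in the sink case). The arrows between $a$ and the other two cycle vertices can only have changed through $2$-paths via $i$. In $\mu_i(Q)$ we have $x\rightarrow i\rightarrow y$ on the cycle, so in $Q$ we have $y\rightarrow i\rightarrow x$ and $i\rightarrow a$. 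Mutation at $i$ adds arrows $y\rightarrow a$, which is consistent with $a$ being a sink; if $a$ is a source we get a contradiction unless the weights cancel. Either some triangle among $i,a$ and the cycle vertices is oriented in $Q$ but acyclic after mutation, which contradicts cycle-preservation, or the weight $b_{ya}$ strictly decreased in absolute value or flipped sign. When $a=u$ this weight change is exactly a decrease of some $|b_{ju}|$, contradicting that $i$ is a $u$-ascent. When $a$ is mutable and $u$ lies on the cycle, the flipped arrow is a mutable arrow, and we use the completeness/cycle-preservation of $Q^{\mut}$ as in \Cref{lem: ascents are cycle preserving}.

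\emph{Case $a=i$.} We show this is impossible directly. Here $i$ is a source or sink in $\mu_i(Q)$, so $i$ is a sink or source in $Q$. Then mutation at $i$ only reverses arrows at $i$ and changes no weight $b_{ju}$, contradicting the strict inequality in the definition of a $u$-ascent. This makes \Cref{prop:apex preserves} unnecessary here.

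The main obstacle is the bookkeeping in the first case. We must verify, using the weight formula $b_{ju}(\mu_i Q)=b_{ju}(Q)+\operatorname{sgn}\cdot|b_{ji}b_{iu}|$ already exploited in \Cref{lem: ascents are cycle preserving}, that every way a vortex could appear forces either a destroyed $3$-cycle or a decrease in some $|b_{ju}|$. We would organize this by whether $u$ is the apex or lies on the cycle, and within each subcase by which vertex of the cycle is $i$, reducing by global reversal and symmetry to a small number of orientation checks.
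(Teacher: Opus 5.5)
Your route differs from the paper's, and your case $a=i$ is correct: if $i$ is the apex of $\mu_i(Q)$, then it is a source or sink of $Q$. So $\mu_i$ changes no $|b_{ju}|$, and $i$ is not a $u$-ascent.

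\textbf{The gap.} The case $a\neq i$ is not actually carried out.
\begin{itemize}
\item You leave a disjunction (``either a triangle is destroyed, or $|b_{ya}|$ dropped'') that you never resolve.
\item You say the added arrows $y\to a$ are ``consistent with $a$ being a sink'', although you placed yourself in the source case.
\item Your fallback for a mutable apex is misdirected. If $u=y$, the weight that drops is $|b_{au}|$, which is not a mutable arrow. If $u=x$, ``completeness of $Q^{\mut}$'' proves nothing by itself.
\end{itemize}

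The missing step is one line. Take the paper's convention that vortices are complete (cf.\ ``if $\mu_1(Q)$ is not complete, there is nothing to show''), so $a$ is adjacent to $i$. Let $a$ be a source of $\mu_i(Q)$ and let $y$ be the successor of $i$ on the cycle. Then $\mu_i(Q)|_{aiy}$ is $a\to i\to y$ with $a\to y$, i.e.\ acyclic with elbow $i$. Mutating back at $i$ gives $y\to i\to a$ in $Q$ and
\[
b_{ay}(Q)=b_{ay}(\mu_i(Q))+b_{ai}(\mu_i(Q))\,b_{iy}(\mu_i(Q))>0 .
\]
Hence $Q|_{aiy}$ is the oriented cycle $a\to y\to i\to a$, and $\mu_i$ destroys it. This is exactly the remark following \Cref{def:cycle-preserving}: a cycle-preserving vertex is never the elbow of an acyclic $3$-vertex subquiver of $\mu_i(Q)$. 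So the first alternative of your disjunction always occurs. No weight bookkeeping is needed, and this case uses cycle-preservation alone.

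\textbf{Comparison with the paper.} With that fix, your argument is a valid alternative. The paper instead counts oriented triangles. A vortex contains exactly one, since every triangle through the apex is acyclic, while $\mu_i(Q)$ contains at least two:
\begin{itemize}
\item The strict increase of some $|b_{ju}|$ with $j\neq i$ makes $\mu_i(Q)|_{iju}$ an oriented cycle by \Cref{lem: rank 2 ascents}(1).
\item A second one is $\mu_i(Q)|_{123}$ when $i$ lies on a directed path of $Q^{\mut}$ (by the elbow or by cycle-preservation). When $i$ is a source or sink of $Q^{\mut}$, it is instead the other triangle through $u$.
\end{itemize}
Your version separates the hypotheses more transparently: the ascent is used only to rule out a source/sink mutation. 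So it shows more generally that $\mu_i(Q)$ is never a vortex whenever $i$ is cycle-preserving and not a source or sink of $Q$. The paper's version leans more on the ascent, but invokes cycle-preservation only for the mutable triangle.
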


\begin{proof}
Without loss of generality, $i=1$. 
If $\mu_1(Q)$ is not complete, there is nothing to show.
%As $1$ is a $u$-ascent, it is not a sink or source mutation. So $1$ cannot be the apex of a vortex.
If $1$ is a sink or source in the mutable part of $Q$, then both $\mu_1(Q)|_{12u}$ and $\mu_1(Q)|_{13u}$ are oriented cycles by \Cref{lem: rank 2 ascents} (1), so $\mu_1(Q)$ is not a vortex.
If $1$ is on an oriented path in $Q^{mut}$, say $3 \rightarrow 1 \rightarrow 2$, then $\mu_1(Q)|_{123}$ and (by \Cref{lem: rank 2 ascents} (1)) either $\mu_1(Q)|_{12u}$ or $\mu_1(Q)|_{13u}$ are oriented cycles, so again $\mu_1(Q)$ is not a vortex.
\end{proof}

\begin{lem} 
\label{lem:weakly balanced grows}
Let $Q$ be a connected quiver with a single frozen vertex $u$ and $Q^{mut}$ mutation-abundant, and let $\mathbf M = m_1 m_2 \cdots$ be a weakly balanced mutation sequence of $Q$ which is cycle-preserving on $Q^{mut}$. 
Then for some $i$, the vertex $m_i$ is a $u$-ascent in $Q^{(i-1)}$. 
%Thus there are infinitely many such mutations.
\end{lem}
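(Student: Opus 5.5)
The plan is to argue by contradiction. Suppose no $m_i$ is a $u$-ascent in $Q^{(i-1)}$. Then every mutation along $\mathbf M$ weakly decreases, in some sense to be made precise, the weights $|b_{ju}|$ between the mutable vertices and $u$. I would track the nonnegative integer potential
\[
\Phi(i)=\sum_{j\in[n]} |b_{ju}(Q^{(i)}_{\mathbf M})|.
\]
I would show that each non-ascent mutation either strictly decreases $\Phi$ or leaves all weights to $u$ unchanged. Since $\Phi\ge 0$, only finitely many strict decreases can occur. So after some index $N$, every mutation leaves all weights $b_{ju}$ unchanged.

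To get the dichotomy, note that mutation at $m_i=k$ changes only the weights $b_{ju}$ with $j\neq k$. It sends $b_{ju}$ to $b_{ju}+\operatorname{sgn}(b_{jk})[b_{jk}b_{ku}]_+$, and $|b_{ku}|$ is preserved. If $k$ is not an ascent, then either nothing changes, or some $|b_{ju}|$ strictly decreases. In the second case I need to rule out another $|b_{j'u}|$ increasing by more. This is where I would use the rank-$2$ analysis of \Cref{lem: rank 2 ascents} on each subquiver $Q|_{jku}$, which is abundant because $Q^{\mut}$ is mutation-abundant. The key point is that a decrease at $j$ forces $Q|_{jku}$ to be an oriented cycle with the path through $k$ dominating. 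Combined with cycle-preservation on the mutable part, the orientations of the other triangles $Q|_{j'ku}$ should then be forced so that $|b_{j'u}|$ does not increase. If this bookkeeping is awkward, the fallback is to use $\Phi$ only to show that the potential cannot decrease forever, and to handle mixed steps as ascents by definition: a mixed step with some strict increase and no decrease is an ascent, and I would show mixed increase-and-decrease steps are impossible.

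After index $N$, every mutation $k$ fixes all $b_{ju}$. By the formula, this means $b_{jk}b_{ku}\le 0$ (in the appropriate signed sense) for all $j$. Equivalently, $k$ is a source or sink in every triangle $jku$ in which $u$ is adjacent to $k$. Because $\mathbf M$ is weakly balanced, every vertex is mutated infinitely often after $N$.

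Now use connectedness: some mutable vertex $k$ has $b_{ku}\ne 0$, and its $c$-vector is frozen in time from $N$ onward. Take $k$ green, say. Then at each mutation at $k$ after $N$, all mutable neighbours $j$ must satisfy $k\to j$, making $k$ a source in $Q^{\mut}$ at that moment. But $Q^{\mut}$ is mutation-abundant, and a cycle-preserving reduced sequence that mutates at every vertex infinitely often keeps the mutable part an abundant oriented $3$-cycle once it is cyclic. In a cyclic abundant quiver no vertex is a source or sink. So a mutation at $k$ must change $b_{ju}$ for some $j$, contradicting the choice of $N$.

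One case remains: the mutable part is acyclic at every such time. Here I would argue that a cycle-preserving sequence on a mutation-abundant rank $3$ quiver cannot consist only of source/sink mutations at a fixed vertex $k$ while all other vertices are also mutated, since the mutations of the other vertices break that pattern. I expect the main obstacle to be the first step, the monotonicity bookkeeping showing that non-ascent mutations cannot raise one weight while lowering another.
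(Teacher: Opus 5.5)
Your proposal has a genuine gap. The dichotomy your argument rests on is false in two separate ways. You claim that ``if $k$ is not an ascent, then either nothing changes, or some $|b_{ju}|$ strictly decreases,'' so that a non-ascent either fixes every $b_{ju}$ or lowers $\Phi$.

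First, \emph{flips}. In case (b) of \Cref{lem: rank 2 ascents}, mutation at $k$ can send $b_{ju}$ to $-b_{ju}$. For example, take $Q^{\mut}$ an oriented cycle and $u\xrightarrow{c} j\xrightarrow{2} k\xrightarrow{c} u$. Then $\mu_k$ is a non-ascent and leaves every $|b_{\ast u}|$ (hence $\Phi$) unchanged, yet it reverses the arrow between $j$ and $u$. So after index $N$ you only know that the absolute values are constant. Your deduction ``$b_{jk}b_{ku}\le 0$ for all $j$, hence $k$ is a source or sink'' does not follow. Flips genuinely persist: with $Q^{\mut}$ the Markov quiver $1\to2\to3\to1$ and $(b_{1u},b_{2u},b_{3u})=(c,c,-c)$, every step of $\overline{13}$ is a flip, forever.

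Second, \emph{mixed steps}. When $Q^{\mut}$ is acyclic and $k$ is a sink or source, $\mu_k$ changes two weights, and cycle-preservation on $Q^{\mut}$ is vacuous. Take $j\xrightarrow{2}j'$, $j\xrightarrow{2}k$, $j'\xrightarrow{5}k$, $k\xrightarrow{1}u$, $u\xrightarrow{10}j$. Mutation at $k$ sends $(|b_{ju}|,|b_{j'u}|,|b_{ku}|)$ from $(10,0,1)$ to $(8,5,1)$. This is a non-ascent that raises $\Phi$. So mixed increase-and-decrease steps are possible, and $\Phi$ gives no termination.

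Your disposal of the acyclic regime is also wrong. On an abundant acyclic mutable part, mutating always at the current source gives the reduced, balanced sequence $\overline{123}$ (cf.\ \Cref{lem:acyclic eventually cycle-preserving too}(1)). In that sequence every vertex is mutated infinitely often, always at a source, which is exactly where mixed steps occur. (Also, $c_k$ is negated at each mutation at $k$, so it is not ``frozen in time.'')

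The paper avoids both problems as follows:
\begin{itemize}
\item It does not need monotonicity. It starts at an index where $\Phi$ attains its minimum over the whole sequence, which exists for any nonnegative integer sequence.
\item In the cyclic regime each mutation changes only one weight. So the first step, at $1$ with $3\to1\to2$ and $u\to1$, is either an ascent or a flip that leaves $\{1,2,u\}$ an oriented cycle.
\item By \Cref{lem: rank 2 ascents}, this persists through the alternation of $1$ and $2$.
\item Weak balance is used only to guarantee a first mutation at $3$. At that point the oriented triangle forces $m_i$ or $m_{i+1}$ to be an ascent.
\item The all-source regime is handled by a separate case analysis (Case 2).
\end{itemize}
Some orientation-tracking argument of this kind, using weak balance through the first appearance of the third vertex rather than through a decreasing potential, is what your proposal is missing.
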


\begin{proof} 
We first establish that some mutation increases a weight $|b_{iu}|$ adjacent to $u$, then we show that this quickly leads to a $u$-ascent.

There are only finitely many arrows adjacent to the frozen vertex. 
We cannot have infinitely many mutations which decrease the total number of arrows adjacent to $u$ without any increasing the total number of arrows adjacent to $u$; we will eventually run out. 
So we may assume that $Q$ has the least number of arrows $|b_{1u}| + |b_{2u}| + |b_{3u}|$ adjacent to $u$ among all the $Q^{(i)}_{\mathbf M}$. %this is a minimum over a discrete valued sequence with a lower bound; the lower bound is attained.

Without loss of generality, say that $m_1=1$. 
As $\mathbf M$ is weakly balanced, we may further suppose that $1$ is adjacent to $u$ in $Q$. \medskip

\textbf{Case 1.} Suppose $Q^{mut}$ is either an oriented cycle, or it is acyclic and $1$ is the elbow. 
Then without loss of generality, $3 \rightarrow 1 \rightarrow 2$.
By relabeling $2,3$ and reversing all arrows if necessary, we may assume $1$ is red.
Mutation at $1$ adds arrows oriented $u \rightarrow 2$.
%Therefore $|b_{1u}(Q)| = |b_{1u}(Q^{(1)}_{\mathbf M})|$, $|b_{3u}(Q)| = |b_{3u}(Q^{(1)}_{\mathbf M})|$, and 
%\[b_{u2}(Q^{(1)}_{\mathbf M}) = b_{u2}(Q) + b_{u1}(Q) b_{12}(Q).\]
Thus, in order for $|b_{u2}(Q^{(1)}_{\mathbf M})| \geq |b_{u2}(Q)|$, we must have $u \rightarrow 2$ in $Q^{(1)}_{\mathbf M}$. %$b_{u1}(Q) b_{12}(Q) \geq 2 b_{2u}(Q)$

Because $\mathbf M$ is weakly balanced, eventually (after some number of alternating mutations at $1,2$), we mutate at vertex $3$. Say $m_i = 3$ is the first such mutation. 
By \Cref{lem: rank 2 ascents} ((3) or (b)) the subquiver $Q^{(i-1)}_{\mathbf M}|_{12u}$ is an oriented cycle.
Because $(Q^{(1)}_{\mathbf M})^{mut}$ is an oriented cycle and $\mathbf M$ is cycle-preserving on the mutable part, $(Q^{(j)}_{\mathbf M})^{mut}$ is an oriented cycle for $j \geq 1$. So there is at most one oriented path $u \rightarrow i \rightarrow j$, and thus a mutation is a $u$-ascent if it increases any of the weights $\{|b_{1u}|, |b_{2u}|, |b_{3u}|\}$.
If $b_{3u}(Q^{(i-1)}_{\mathbf M}) \neq 0$ then $m_i$ is the elbow of either $Q^{(i-1)}_{\mathbf M}|_{13u}$ or $Q^{(i-1)}_{\mathbf M}|_{23u}$, and thus a $u$-ascent. 
Otherwise $b_{3u}(Q^{(i-1)}_{\mathbf M})=0$. 
Vertex $1$ (resp. $2$) is an elbow in $Q^{(i)}_{\mathbf{M}}|_{13u}$ (resp. $Q^{(i)}_{\mathbf{M}}|_{23u}$). 
Thus, regardless of the value of $m_{i+1}$, the weight $|b_{3u}|$ increases from $Q^{(i)}_{\mathbf M}$ to $Q^{(i+1)}_{\mathbf M}$ and $m_{i+1}$ is a $u$-ascent. \medskip

\textbf{Case 2.} Suppose $1$ is a source or sink in $Q^{mut}$. Reversing arrows if necessary, say $1$ is a source.
If any future mutation $m_i$ is not at a source in $(Q^{(i-1)}_{\mathbf{M}})^{mut}$, then we may replace $Q$ with $Q^{(i)}_{\mathbf{M}}$, and we will eventually find ourselves in Case 1.
We may further assume $u \rightarrow 1$ in $Q$ (if we perform $3$ consecutive source mutations starting at $1$, then $u$ is a source and so $u \rightarrow 1$ in $Q^{(3)}_{\mathbf{M}}$). 

We claim that either $m_1$ or $m_2$ will increase some weight adjacent to $u$. 
If $|b_{2u}(Q^{(1)}_{\mathbf M})| = |b_{2u}(Q)|$ and $|b_{3u}(Q^{(1)}_{\mathbf M})|=|b_{3u}(Q)|$, then $2 \rightarrow u \leftarrow 3$ in $Q$ (we are in Case (b) of \Cref{lem: rank 2 ascents}).
Up to relabeling, say $m_2=2$ and $2 \rightarrow 3$ in $Q^{(1)}_{\mathbf M}$. 
Then $2$ is an elbow in $Q^{(1)}|_{23u}$ and so $|b_{3u}(Q^{(1)})| < |b_{3u}(Q^{(2)})|$. 

To restate: reindexing and relabeling as necessary, we have a mutation $1$ in $Q$ which increases some weight adjacent to $u$, say $b_{2u}$. 
Recall by assumption that all mutations are at a source vertex in the mutable part $(Q^{(i-1)}_{\mathbf{M}})^{mut}$. 
As $Q^{mut}$ is abundant, it is complete. So $\mathbf{M}$ is determined by its first two values.
%Thus we know the following orientations in $Q^{(1)}_{\mathbf M}$:
Without loss of generality, we assume that $1 \rightarrow u \rightarrow 2 \rightarrow 1 \leftarrow 3$ in $Q^{(1)}_{\mathbf{M}}$.

\textbf{Case 2a.} $m_2=3$, and so $\mathbf M = \overline{132}$.

If $3$ is a source in $Q^{(1)}_{\mathbf M}$ then $b_{u3}(Q^{(2)}_{\mathbf M}) \geq 0$ and $Q^{(1)}_{\mathbf M}|_{12u} = Q^{(2)}_{\mathbf M}|_{12u}$.
%Also $b_{u2}(Q^{(2)}_{\mathbf M}) = b_{u2}(Q) + b_{u1}(Q) b_{12}(Q) > |b_{u2}(Q)|$, and so $b_{u2}(Q^{(2)}_{\mathbf M}) > b_{1u}(Q^{(2)}_{\mathbf M})$ (recall that $Q^{mut}$ is abundant). 
Both noticing that the elbow in $Q^{(2)}_{\mathbf M}|_{23u}$ is $2$ and applying \Cref{lem: rank 2 ascents} (3) to $Q^{(2)}_{\mathbf M}|_{12u}$, we see $2$ is a $u$-ascent in $Q^{(2)}_{\mathbf M}$.

If instead $u \rightarrow 3$ in $Q^{(1)}_{\mathbf M}$ then $3$ is an elbow in $Q^{(1)}_{\mathbf M}|_{23u}$ (and so $|b_{2u}(Q^{(1)}_{\mathbf M})| < |b_{2u}(Q^{(2)}_{\mathbf M})| $). 
%If $b_{u3}(Q^{(1)}_{\mathbf M}) b_{31}(Q^{(1)}_{\mathbf M})> 2b_{1u}(Q^{(1)}_{\mathbf M})$ (and thus $|b_{1u}(Q^{(2)}_{\mathbf M})| > |b_{1u}(Q^{(1)}_{\mathbf M})|$) 
If $3$ is a $u$-ascent in $Q^{(1)}_{\mathbf M}$ then we are done. 
Otherwise $|b_{1u}(Q^{(2)}_{\mathbf M})| < |b_{1u}(Q^{(1)}_{\mathbf M})|$, so by \Cref{lem: rank 2 ascents} (2) we have 
\[|b_{1u}(Q^{(2)}_{\mathbf M})| < |b_{1u}(Q^{(1)}_{\mathbf M})| < |b_{2u}(Q^{(1)}_{\mathbf M})| < |b_{2u}(Q^{(2)}_{\mathbf M})| \] 
and $|b_{3u}(Q^{(2)}_{\mathbf M})| < |b_{2u}(Q^{(2)}_{\mathbf M})|$, so $2$ is a $u$-ascent in $Q^{(2)}_{\mathbf M}$.

\textbf{Case 2b.} $m_2=2$, and so $\mathbf M = \overline{123}$.

If $|b_{3u}(Q^{(2)}_{\mathbf M})| > |b_{3u}(Q^{(1)}_{\mathbf M})|$ then $2$ is a $u$-ascent in $Q^{(2)}_{\mathbf M}$. 
So we are left with the case where $2$ increases $|b_{1u}|$ and does not increase $|b_{3u}|$ with the next mutation at $3$, this is exactly the setting of Case 2a after relabeling vertices.
\end{proof}

Just one $u$-ascent mutation effectively implies that all future mutations are.

\begin{lem}
\label{lem: increasing sticky}
\label{lem: increasing sticky for incomplete}
Let $Q$ be a connected rank $3$ quiver with one frozen vertex $u$ such that $Q^{\mut}$ is mutation-abundant. 
Let $\mathbf M = m_1 m_2\cdots $ be a reduced mutation sequence which is cycle-preserving on $Q^{mut}$. 
If $m_1$ is a $u$-ascent in $Q = Q_{\mathbf M}^{(0)}$, 
then each mutation at $m_i$ is either a $u$-ascent in $Q_{\mathbf M}^{(i-1)}$ or not adjacent to $u$.
\end{lem}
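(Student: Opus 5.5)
We argue by induction on $i$: assuming $m_{i-1}$ is a $u$-ascent in $Q^{(i-2)}_{\mathbf M}$ (or, more carefully, that the most recent mutation adjacent to $u$ was a $u$-ascent), we show $m_i$ is a $u$-ascent or not adjacent to $u$. The base case is the hypothesis on $m_1$. Since $Q^{\mut}$ is mutation-abundant and $\mathbf M$ is cycle-preserving on $Q^{\mut}$, every $(Q^{(i)}_{\mathbf M})^{\mut}$ has all weights at least $2$. Moreover, once a mutation occurs at a vertex that is not a source or sink of the mutable part, the mutable part stays an oriented cycle; otherwise the sequence consists of source/sink mutations. We also use \Cref{lem: ascents are cycle preserving}: each ascent is cycle-preserving on the whole quiver, so orientations of $3$-vertex subquivers containing $u$ evolve predictably.

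The key local input is \Cref{lem: rank 2 ascents} applied to the rank $2$ subquivers $Q^{(i)}_{\mathbf M}|_{jku}$. Suppose $m_{i-1}=j$ was a $u$-ascent, and let $m_i=k\neq j$ (reducedness). For $k$ adjacent to $u$, we show that $|b_{\ell u}|$ does not decrease for both $\ell\neq k$ and that at least one increases. For $\ell=j$, we split on how $j$'s mutation affected $|b_{ku}|$. If it strictly increased $|b_{ku}|$, then \Cref{lem: rank 2 ascents}(3) gives that mutating at $k$ strictly increases $|b_{ju}|$. If it preserved $|b_{ku}|$, then cases (a)/(b) of \Cref{lem: rank 2 ascents} show that mutating at $k$ does not decrease $|b_{ju}|$. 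For the third vertex $\ell$, the weight $|b_{\ell u}|$ was unchanged or increased by $j$. We must show $k$ does not decrease it. Here we use that the new quiver is not a vortex (\Cref{lem: cycle preserving ascents avoid vortices}) together with the oriented-cycle structure: $k$ decreases $|b_{\ell u}|$ only if $Q|_{k\ell u}$ is an oriented cycle that mutation turns acyclic, or if $k$ sits on a path $\ell\to k\to u$ with insufficient growth. We rule these out by observing that after a $u$-ascent at $j$, the subquivers through $u$ are either oriented cycles oriented compatibly with the mutable cycle, or have $j$ as the unique elbow.

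If $k$ is not adjacent to $u$, then mutation at $k$ changes no weights to $u$ except by adding $b_{\ell k}b_{ku}=0$, so nothing changes. The induction then continues from the last ascent. In this case we need the invariant to be about the configuration rather than about the immediately preceding mutation, so we phrase it as: "the quiver is in the image of a $u$-ascent," i.e. every $3$-vertex subquiver through $u$ is an oriented cycle or has its elbow's adjacent weight dominated as in \Cref{lem: rank 2 ascents}(2).

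The main obstacle is controlling the third weight $|b_{\ell u}|$ when $k$ is mutated, since \Cref{lem: rank 2 ascents} only addresses pairs. We will first try to establish the invariant that, after a $u$-ascent, for every pair $\{a,b\}$ of mutable vertices the subquiver $Q|_{abu}$ is an oriented cycle with the inequality $|b_{ua}b_{ab}|\geq 2|b_{bu}|$ in the direction of the last mutation. We will then verify by a short case analysis (source/sink mutable part versus oriented-cycle mutable part, as in Cases 1 and 2 of \Cref{lem:weakly balanced grows}) that this invariant is preserved and forces every subsequent mutation adjacent to $u$ to be an ascent.
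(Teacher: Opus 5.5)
Your plan has two genuine gaps, and the second shows that the statement as literally written (with strict $u$-ascents) cannot be proved.

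\textbf{The invariant you propagate is false.} You plan to rely on the claim that after a $u$-ascent every $Q|_{abu}$ is an oriented cycle, or has $j$ as its elbow. Normalize as the paper does: $m_1=1$ with $u\to 1\to 2$ in $\mu_1(Q)$, so the ascent condition forces $2\to u$. The paper then splits on whether $1$ is a source of $\mu_1(Q)^{\mut}$.
\begin{itemize}
\item If it is (say $2\to 3$), the ascent condition also forces $3\to u$. Then $\mu_1(Q)|_{23u}$ is the acyclic triangle $2\to 3\to u$, $2\to u$. It is not a cycle, and its elbow is $3$, which is not the last-mutated vertex.
\item If it is not, you have $u\to 1\to 2\to 3\to 1$ and $2\to u$, and $3$ need not be adjacent to $u$ at all.
\end{itemize}
So your control of the third weight $|b_{\ell u}|$ has no basis. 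What actually works is reading off the full orientation of $\mu_1(Q)$ in these two cases and checking, for each possible next vertex, which weights at $u$ it touches.

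\textbf{The non-adjacent case is also mishandled.} A mutation at $3$ when $3$ is not adjacent to $u$ leaves the weights at $u$ alone, but it reverses the arrow between $1$ and $2$ and changes $b_{12}$. So \Cref{lem: rank 2 ascents}(3), which concerns two \emph{consecutive} mutations, no longer applies to the next step. That next step can also be at $j$ again, which your ``$k\neq j$ by reducedness'' excludes. The paper treats this case separately: after $\mu_3$ one has $u\to 1\to 3$ and $3\to 2\to u$, so whichever of $1,2$ is mutated next creates arrows between $3$ and $u$.

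\textbf{Strictness can fail.} You assert, but never argue, that at least one weight strictly increases. In the branch where $\mu_j$ only preserved $|b_{ku}|$ (case (b) of \Cref{lem: rank 2 ascents}), this genuinely fails. Take $2\xrightarrow{2}1$, $3\xrightarrow{4}1$, $2\xrightarrow{5}3$, $1\xrightarrow{1}u$, $u\xrightarrow{1}2$, with no arrows between $3$ and $u$. The mutable part is abundant acyclic, hence mutation-abundant.
\begin{itemize}
\item Mutation at $1$ takes $(|b_{1u}|,|b_{2u}|,|b_{3u}|)$ from $(1,1,0)$ to $(1,1,4)$, so $m_1=1$ is a $u$-ascent.
\item $\mu_1(Q)^{\mut}$, namely $1\to 2\to 3$ with $1\to 3$, is acyclic, so $m_2=2$ is vacuously cycle-preserving on the mutable part.
\item $\mu_2$ leaves the triple at $(1,1,4)$, even though $2$ is adjacent to $u$. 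Here $|b_{12}|=2$ and equality holds in \Cref{lem: rank 2 ascents}(b), so mutating at $2$ just restores $|b_{1u}|$.
\end{itemize}

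The paper's own source case has the same hole. It invokes \Cref{lem: rank 2 ascents}(3) for both remaining vertices, which needs $\mu_1$ to have strictly increased both $|b_{2u}|$ and $|b_{3u}|$; a $u$-ascent guarantees only one. At best you can aim for the non-strict conclusion that no later mutation adjacent to $u$ decreases a weight at $u$. That is the property \Cref{lem: ascents are cycle preserving} actually uses.
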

\begin{proof}
By induction, it suffices to show that the next vertex $m_i$ for $i > 1$ that is adjacent to $u$ is a $u$-ascent. % some $i>0$ such that $m_i$ increases the number of arrows adjacent to $u$ in $Q_{\mathbf M}^{(i-1)}$.
Without loss of generality, let $m_1=1$ and assume $u \rightarrow 1 \rightarrow 2$ in $\mu_1(Q)$.
%So we have $2 \rightarrow u$ in $\mu_1(Q)$ and $b_{2u}(\mu_1(Q)) > b_{u2}(Q)$.

We split into cases depending on if $\mu_1(Q)^{mut}$ has a source at $1$.

If $1$ \textbf{is a source} in $\mu_1(Q)^{mut}$, then, without loss of generality, say  $2 \rightarrow 3$. We have the following orientations of $\mu_1(Q)$:
\begin{center}
\begin{tikzpicture}[->, >={Stealth[round]}, node distance=3cm and 4cm, main/.style = {circle, draw, fill=none, minimum size=6mm, inner sep=0pt}, every edge/.append style={thick}, square/.style={regular polygon,regular polygon sides=4}]

\node[main] (1) {$\boldsymbol{1}$};
\node[main] (2) [right=2cm of 1] {$\boldsymbol{2}$};
\node[main] (3) [below=2cm of 2] {$\boldsymbol{3}$};
\node[draw, square, inner sep=.8mm] (4) [below=2cm of 1] {$\boldsymbol{u}$};

\node (Q) [below left=0.8cm of 1]{$\mu_{m_1}(Q):$}; %this causes compilation to fail??

\path 
    (2) edge (3)
    (1) edge (2)
    (1) edge (3)
    (4) edge (1)
    (2) edge (4)
    (3) edge (4);
\end{tikzpicture}
\end{center}
By \Cref{lem: rank 2 ascents} (3), mutation at either $2$ or $3$ in $Q^{(1)}_{\mathbf{M}}$ will increase $|b_{1u}|$. Mutation at $3$ doesn't change any other weights adjacent to $u$, so $3$ is a $u$-ascent. Vertex $2$ is the elbow of $Q^{(1)}_{\mathbf M}|_{23u}$, so $2$ is also a $u$-ascent.

If $1$ \textbf{is not a source} in $\mu_1(Q)^{mut}$, then we have $2 \rightarrow 3 \rightarrow 1$ in $\mu_1(Q)$. 
So we have the following orientations:
%In $\mu_{1}(Q)$ there are two subquivers which are oriented cycles: $\mu_{1}(Q)|_{12u}$ and $\mu_{m_1}(Q)|_{123}$. 

\begin{center}
\begin{tikzpicture}[->, >={Stealth[round]}, node distance=3cm and 4cm, main/.style = {circle, draw, fill=none, minimum size=6mm, inner sep=0pt}, every edge/.append style={thick}, square/.style={regular polygon,regular polygon sides=4}]

\node[main] (1) {$\boldsymbol{1}$};
\node[main] (2) [right=2cm of 1] {$\boldsymbol{2}$};
\node[main] (3) [below=2cm of 2] {$\boldsymbol{3}$};
\node[draw, square, inner sep=.8mm] (4) [below=2cm of 1] {$\boldsymbol{u}$};

\node (Q) [below left=0.8cm of 1]{$\mu_{m_1}(Q):$}; %this causes compilation to fail??

\path 
    (2) edge (3)
    (3) edge (1)
    (1) edge (2)
    (4) edge (1)
    (2) edge (4)
    (3) edge[dashed, -] (4);
\end{tikzpicture}
\end{center}
%(The dashed line between $3$ and $u$ could have either orientation, or be missing.)

Mutation at $2$ is a $u$-ascent by \Cref{lem: rank 2 ascents} (3). 
If $3$ is adjacent to $u$ in $Q$, then, regardless of the missing arrow orientation, mutation at $3$ is a $u$-ascent because $3$ is the elbow of either $\mu_{1}(Q)|_{13u}$ or $\mu_{m_1}(Q)|_{23u}$.
On the other hand, if $3$ is not adjacent to $u$ then both $u \rightarrow 1 \rightarrow 3$ and $u \leftarrow 2 \leftarrow 3$ in $Q_{\mathbf M}^{(2)} = \mu_3(\mu_1(Q))$. 
Thus regardless of the value of $m_3 \in \{1,2\}$, vertex $m_3$ is a $u$-ascent in $Q_{\mathbf M}^{(2)}$.
\end{proof}

\begin{lem}
\label{lem: cycle preserving tail}
Let $Q$ be a connected rank $3$ quiver %with one frozen vertex $u$ 
such that $Q^{\mut}$ is mutation-abundant. Let $\mathbf{M}$ be a reduced and weakly balanced mutation sequence such that $\mathbf{M}$ is cycle-preserving on $Q^{mut}$. 
Then there is some $i$ such that $\mathbf{M'} = m_im_{i+1}\cdots$ is cycle-preserving on $Q^{(i-1)}$.
\end{lem}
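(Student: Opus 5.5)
The plan is to reduce to one frozen vertex at a time and then use the $u$-ascent machinery. Cycle-preservation of a mutation $j$ on $Q$ only concerns oriented $3$-cycles through $j$. A $3$-vertex subquiver containing at least two mutable vertices either lies in $Q^{\mut}$, which is handled by hypothesis, or has the form $Q|_{jku}$ for a single frozen vertex $u$; subquivers with two frozen vertices never contain an oriented cycle in the sense of \Cref{def:elbow-oriented}. So it suffices to show the following for each of the finitely many frozen vertices $u$: there is an index $i_u$ such that every $m_\ell$ with $\ell \geq i_u$ is cycle-preserving for $Q^{(\ell-1)}_{\mathbf M}|_{[3]\cup\{u\}}$. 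Taking $i = \max_u i_u$ then proves the lemma. Since restricting to $[3]\cup\{u\}$ commutes with mutation, we may assume $Q$ has a single frozen vertex $u$. If $u$ is isolated it contributes nothing, so we may also assume $Q$ is connected in the paper's sense.

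Next we locate a $u$-ascent. By \Cref{lem:weakly balanced grows}, which applies since $\mathbf M$ is weakly balanced and cycle-preserving on $Q^{\mut}$, some $m_{i_0}$ is a $u$-ascent in $Q^{(i_0-1)}_{\mathbf M}$. The tail $m_{i_0}m_{i_0+1}\cdots$ is still reduced and cycle-preserving on the mutable part, and $Q^{(i_0-1)}_{\mathbf M}$ still has mutation-abundant mutable part. Therefore \Cref{lem: increasing sticky} applies from index $i_0$ onward: every later $m_\ell$ is either a $u$-ascent in $Q^{(\ell-1)}_{\mathbf M}$ or not adjacent to $u$.

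We then show that each such mutation is cycle-preserving for the quiver on $[3]\cup\{u\}$. For $u$-ascents this is exactly \Cref{lem: ascents are cycle preserving}, using the hypothesis that $m_\ell$ is cycle-preserving on the mutable part. The case where $m_\ell$ is not adjacent to $u$ needs separate treatment. Then every oriented $3$-cycle through $m_\ell$ involving $u$ would need an arrow between $m_\ell$ and $u$, so there are none. The only cycles left to check lie in the mutable part, which is covered by hypothesis. This settles the lemma with $i = i_0$ (per frozen vertex).

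The main obstacle I anticipate is the reduction step, specifically whether \Cref{lem:weakly balanced grows} and \Cref{lem: increasing sticky} really apply with their hypotheses. One issue is that \Cref{lem:weakly balanced grows} needs connectivity with respect to $u$. If $u$ is adjacent to only some vertices, the argument there uses weak balance to reach a vertex adjacent to $u$; I would verify that this is harmless, or handle it by noting that non-adjacency to $u$ persists only while we mutate at vertices not adjacent to $u$. A second issue is whether "not adjacent to $u$" mutations can later destroy the ascent property. I expect \Cref{lem: increasing sticky}'s inductive statement already accounts for this, since it asserts the dichotomy for all $i$, so I would simply cite it and double-check its induction covers the non-adjacent case.
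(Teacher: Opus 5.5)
Your proposal is correct and follows essentially the same route as the paper. Both arguments reduce to a single frozen vertex $u$, use \Cref{lem:weakly balanced grows} to find a $u$-ascent, and propagate it with \Cref{lem: increasing sticky}. They then conclude via \Cref{lem: ascents are cycle preserving} for $u$-ascents and via the hypothesis on $Q^{\mut}$ for mutations not adjacent to $u$; your explicit justification of the one-frozen-vertex reduction (taking the maximum of the indices $i_u$) fills in a step the paper only asserts.
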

\begin{proof}
It suffices to check when $Q$ has just one frozen vertex $u$.
\Cref{lem:weakly balanced grows} states that eventually some mutation $m_i$ in $\mathbf{M}$ is a $u$-ascent.
By \Cref{lem: increasing sticky}, all mutations thereafter are $u$-ascents or are not adjacent to $u$.
Mutations at $u$-ascents are cycle-preserving by \Cref{lem: ascents are cycle preserving}.
If $i$ is not adjacent to $u$, then the only complete subquiver containing $i$ is $Q^{mut}$, and $i$ is cycle-preserving for $Q^{mut}$ by assumption.
Thus the mutation sequence $\mathbf{M'}=m_im_{i+1}\cdots$ is cycle-preserving.
\end{proof}

\begin{lem}
\label{lem: complete tail}
Let $Q$ be a connected rank $3$ quiver with $Q^{mut}$ mutation-abundant.
Let $\mathbf M$ be a reduced and weakly balanced mutation sequence which is cycle-preserving on $Q^{mut}$. 
Then there exists an $i$ such that for all $j>i$, $Q^{(j)}_{\mathbf M}$ is complete.
\end{lem}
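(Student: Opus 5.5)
We want: eventually every quiver on $\mathbf M$ is complete. Completeness means at least one arrow between every pair of vertices where one is mutable. Since $Q^{\mut}$ is mutation-abundant, every mutable pair always carries at least $2$ arrows. So it suffices to show that for each frozen vertex $u$ and each mutable vertex $k$, eventually $b_{ku}(Q^{(j)}_{\mathbf M}) \neq 0$ for all large $j$. This condition only involves the subquiver $Q|_{[3]\cup\{u\}}$, and there are finitely many frozen vertices. We may therefore reduce to a single frozen vertex $u$ and take the maximum of the resulting thresholds. Since $Q$ is connected, $u$ is not isolated.

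With one frozen vertex $u$, the plan has four steps.

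\textbf{Step 1: reach a $u$-ascent.} By \Cref{lem:weakly balanced grows} there is some index $i_0$ with $m_{i_0}$ a $u$-ascent in $Q^{(i_0-1)}_{\mathbf M}$.

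\textbf{Step 2: weights at $u$ never decrease afterwards.} By \Cref{lem: increasing sticky}, applied to the tail sequence starting at $m_{i_0}$, every later mutation is either a $u$-ascent or at a vertex not adjacent to $u$. A $u$-ascent never decreases any $|b_{ku}|$. Mutating at a vertex $m$ with $b_{mu}=0$ changes no weight at $u$, since the mutation rule only adds $b_{km}b_{mu}$ terms. Hence from index $i_0$ on, each weight $|b_{ku}|$, $k\in\{1,2,3\}$, is non-decreasing. In particular, once $b_{ku}\neq 0$ it stays nonzero.

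\textbf{Step 3: every mutable vertex eventually becomes adjacent to $u$.} After step $i_0$, at least one vertex, $m_{i_0}$ or a neighbor whose weight increased, is adjacent to $u$. Suppose $k$ is not adjacent to $u$ at some later time. Since $Q^{\mut}$ is complete, some other mutable vertex $\ell$ is adjacent to both $k$ and $u$. By weak balance, $\ell$ is mutated at some later time. At that moment, either $\ell$ is a $u$-ascent or not adjacent to $u$. The second option is impossible by Step 2, because $\ell$ stays adjacent to $u$. When we mutate at $\ell$ while $b_{ku}=0$, the new weight is $b_{ku}'=\max(0,b_{k\ell}b_{\ell u})$-type, so it is nonzero exactly when there is an oriented path $k\to\ell\to u$ or $u\to\ell\to k$. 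If neither path exists, $k$ is still unadjacent, but mutating at $\ell$ reverses both arrows at $\ell$, so the next mutation at $k$ or at another neighbor creates the path. I expect this bookkeeping to be the main obstacle: one must show that weak balance forces such a path through some neighbor before all neighbors are exhausted.

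The cleanest way to handle the obstacle is probably to mirror the end of Case 1 in \Cref{lem:weakly balanced grows}. Consider the first mutation at $k$ itself after $i_0$. At that moment, $k$ together with two neighbors adjacent to $u$ forms subquivers $Q|_{k\ell u}$. By cycle-preservation on $Q^{\mut}$ and the reasoning already used there (``$1$ (resp.\ $2$) is an elbow in $Q|_{13u}$ (resp.\ $Q|_{23u}$)''), either the mutation at $k$ or the next mutation produces an elbow configuration that makes $|b_{ku}|$ positive. Once there are two $u$-adjacent vertices, the third follows the same way, since each of its two neighbors is $u$-adjacent.

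\textbf{Step 4: conclude.} Combining Steps 2 and 3, there is an index $i$ after which all three weights $|b_{ku}|$ are positive and stay positive. The mutable weights are always at least $2$ by mutation-abundance. So $Q^{(j)}_{\mathbf M}$ is complete for all $j>i$, with $i$ taken as the maximum over the frozen vertices.
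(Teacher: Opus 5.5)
Your reduction to one frozen vertex and Steps 1, 2 and 4 are sound. In particular, getting monotonicity of every $|b_{ku}|$ after the first $u$-ascent directly from \Cref{lem: increasing sticky} is a reasonable substitute for the paper's route. The paper instead upgrades to full cycle-preservation via \Cref{lem: cycle preserving tail}, notes that cycle-preserving mutations never delete an edge, and runs the finite orientation diagram of \Cref{fig: cycle preserving on incomplete quivers}.

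The gap is Step 3, which carries all the content and is not proved. Your first attempt is incorrect: if $\ell$ is a source or sink of $Q|_{k\ell u}$, mutating at $\ell$ reverses both of its arrows there, so it stays a source or sink and no path through $\ell$ appears. Your fallback asserts that when $k$ is first mutated the picture is that of Case~1 of \Cref{lem:weakly balanced grows}, so the following mutation adds arrows between $k$ and $u$. (Mutation at $k$ itself never changes $|b_{ku}|$.) This does not follow from cycle-preservation on $Q^{\mut}$ together with Step 2. Take mutable part $\ell_1\to\ell_2\to k\to\ell_1$ with all weights $3$, $u\to\ell_1$, $u\to\ell_2$, and $b_{ku}=0$. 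Mutating at $k$ and then at $\ell_2$ leaves $b_{ku}=0$, and the mutation at $\ell_2$ is even a $u$-ascent. So you must rule out such configurations after time $i_0$, and nothing in the proposal does so.

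The missing idea is to pin down the configuration that the $u$-ascent itself produces.
\begin{itemize}
\item Mutation at $m=m_{i_0}$ does not change $|b_{mu}|$, so the strict increase happens at some $\ell\neq m$. Hence right after step $i_0$ at least two vertices are adjacent to $u$, not merely one.
\item Suppose the third vertex $k$ is not adjacent to $u$. Use \Cref{lem: rank 2 ascents}~(1) and the non-source case in the proof of \Cref{lem: increasing sticky}; the source case there forces $b_{ku}\neq 0$. Up to global reversal, $Q^{(i_0)}_{\mathbf M}$ then has $u\to m\to\ell\to u$ and mutable part $m\to\ell\to k\to m$: two oriented cycles sharing the edge $m\to\ell$.
\item A mutation at $m$ or $\ell$ reproduces this shape with $m,\ell$ interchanged. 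Step 2 fixes the new orientation of the edge at $u$, and cycle-preservation on $Q^{\mut}$ fixes that of the mutable edge.
\item Weak balance guarantees a first mutation at $k$. It produces $u\to m\to k$ and $k\to\ell\to u$, so both $m$ and $\ell$ are elbows.
\item By reducedness the next mutation is at $m$ or $\ell$, so it creates an arrow between $k$ and $u$, which persists by Step 2.
\end{itemize}
This is exactly the passage from case (3) to case (2) to case (1) in the paper's proof. With this argument supplied, your Step 3, and hence your proof, goes through.
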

\begin{proof}
It suffices to check when $Q$ has just one frozen vertex $u$.
By \Cref{lem: cycle preserving tail}, we may assume that $\mathbf M$ is cycle-preserving.
Because we only remove arrows from oriented cycles, the set of edges in the underlying undirected graph of $Q^{(j)}_{\mathbf M}$ is non-decreasing.
Note that if $i$ is cycle preserving then the orientations of $\mu_i(Q)$ are entirely determined by the orientations of $Q$. 
So we consider the possible directed graphs. 
If $Q^{mut}$ is acyclic, then  $(Q^{(6)}_{\mathbf{M}})^{mut}$ is complete or an oriented cycle (we may only perform $5$ consecutive source/sink mutations). %make a lolipop with handle u, mutate away from the handle, then once at the neighbor, then away again.
So we assume $Q^{mut}$ is an oriented cycle.
Up to global reversal of arrows and relabeling, assume that $1 \rightarrow 2$ and $u \rightarrow 1$ in $Q$. (See also \Cref{fig: cycle preserving on incomplete quivers}.)
\begin{enumerate}
\item 
If $Q$ is complete, we are done.
\item 
If $Q$ has $3 \rightarrow u$ then a cycle-preserving mutation at $1$ or $3$ results in a complete quiver. A cycle-preserving mutation at $2$ sends us to the next case (up to isomorphism).
\item 
If $Q$ has $2 \rightarrow u$ then a cycle preserving mutation at $1$ or $2$ will return us to this same case (after relabeling $1$ and $2$). A cycle preserving mutation at $3$ puts us in the previous case (up to isomorphism).
\item 
If $Q$ has a source or sink at $u$, then mutations not adjacent to $u$ return us to this case. A mutation at a neighbor of $u$ sends us to one of the other cases.
\end{enumerate}
We note that if we go from case $(3)$ to case $(2)$ then the next mutation necessarily gives us a complete quiver, case $(1)$, (the mutation back to case $(2)$ is forbidden because our mutation sequence is reduced). 
Because our mutation sequence is weakly balanced, eventually we reach a complete quiver.
\end{proof}

\begin{figure}
\centering
\begin{tikzpicture}[->, >={Stealth[round]}, node distance=3cm and 4cm, main/.style = {circle, draw, fill=none, minimum size=6mm, inner sep=0pt}, every edge/.append style={thick}, square/.style={regular polygon,regular polygon sides=4}]

\node[draw=none, outer sep= 1.1cm] (complete) at (0,0) {};
\begin{scope}[shift = {(complete)}] %complete
%\filldraw[black] (0,0)++(150:1cm) node[main, fill = white] (1) {\textcolor{black}{$\boldsymbol{1}$}};
%\filldraw[black] (0,0)++(30:1cm) node[main, fill = white] (2) {\textcolor{black}{$\boldsymbol{2}$}};
%\filldraw[black] (0,0)++(-90:1cm) node[main, fill = white] (3) {\textcolor{black}{$\boldsymbol{3}$}};
%\filldraw[black] (0,0) node[main, fill = white, square] (u) {\textcolor{black}{$\boldsymbol{u}$}};
\node[draw, square, inner sep=-0.3] at (0,0) {complete}; %remove the node

%\path 
%    (2) edge (3)
%    (3) edge (1)
%    (1) edge (2)
%    (u) edge[-] (1)
%    (u) edge[-] (3)
%    (2) edge[-] (u);
\end{scope}

\node[draw=none, outer sep= 1.1cm] (1nbr) at (10,5) {};
\begin{scope}[shift = {(1nbr)}] %one nbr
\filldraw[black] (0,0)++(150:1cm) node[main, fill = white] (1) {\textcolor{black}{$\boldsymbol{1}$}};
\filldraw[black] (0,0)++(30:1cm) node[main, fill = white] (2) {\textcolor{black}{$\boldsymbol{2}$}};
\filldraw[black] (0,0)++(-90:1cm) node[main, fill = white] (3) {\textcolor{black}{$\boldsymbol{3}$}};
\filldraw[black] (0,0) node[main, fill = white, square] (u) {\textcolor{black}{$\boldsymbol{u}$}};

\path 
    (2) edge (3)
    (3) edge (1)
    (1) edge (2)
    (u) edge (1);
\end{scope}

\node[draw=none, outer sep= 1.3cm] (2nbrSource) at (0,5) {};
\begin{scope}[shift = {(2nbrSource)}] %two nbr source
\filldraw[black] (0,0)++(150:1cm) node[main, fill = white] (1) {\textcolor{black}{$\boldsymbol{1}$}};
\filldraw[black] (0,0)++(30:1cm) node[main, fill = white] (2) {\textcolor{black}{$\boldsymbol{2}$}};
\filldraw[black] (0,0)++(-90:1cm) node[main, fill = white] (3) {\textcolor{black}{$\boldsymbol{3}$}};
\filldraw[black] (0,0) node[main, fill = white, square] (u) {\textcolor{black}{$\boldsymbol{u}$}};

\path 
    (2) edge (3)
    (3) edge (1)
    (1) edge (2)
    (u) edge (1)
    (u) edge (2);
\end{scope}

\node[draw=none, outer sep= 1.1cm] (elbow) at (5,0) {};
\begin{scope}[shift = {(elbow)}] %just one cycle, elbow
\filldraw[black] (0,0)++(150:1cm) node[main, fill = white] (1) {\textcolor{black}{$\boldsymbol{1}$}};
\filldraw[black] (0,0)++(30:1cm) node[main, fill = white] (2) {\textcolor{black}{$\boldsymbol{2}$}};
\filldraw[black] (0,0)++(-90:1cm) node[main, fill = white] (3) {\textcolor{black}{$\boldsymbol{3}$}};
\filldraw[black] (0,0) node[main, fill = white, square] (u) {\textcolor{black}{$\boldsymbol{u}$}};

\path 
    (2) edge (3)
    (3) edge (1)
    (1) edge (2)
    (1) edge (u)
    (u) edge (2);
\end{scope}

\node[draw=none, outer sep= 1.1cm] (2cycles) at (5,5) {};
\begin{scope}[shift = {(2cycles)}] %two cycles
\filldraw[black] (0,0)++(150:1cm) node[main, fill = white] (1) {\textcolor{black}{$\boldsymbol{1}$}};
\filldraw[black] (0,0)++(30:1cm) node[main, fill = white] (2) {\textcolor{black}{$\boldsymbol{2}$}};
\filldraw[black] (0,0)++(-90:1cm) node[main, fill = white] (3) {\textcolor{black}{$\boldsymbol{3}$}};
\filldraw[black] (0,0) node[main, fill = white, square] (u) {\textcolor{black}{$\boldsymbol{u}$}};

\path 
    (2) edge (3)
    (3) edge (1)
    (1) edge (2)
    (u) edge (1)
    (2) edge (u);
\end{scope}

\path[-{Latex[length=3mm]}, color = gray] (2cycles) edge["3", bend right] (elbow)
    (elbow) edge["3", bend right] (2cycles)
    (1nbr) edge["1"'] (2cycles)
    (2nbrSource) edge["1"] (2cycles)
    (2nbrSource) edge["2"] (complete)
    (elbow) edge["1", bend right] (complete)
    (elbow) edge["2"', bend left] (complete)
    (2cycles) edge[loop above, in=65, out=81, looseness=15] node {$2$} (2cycles)
    (2cycles) edge[loop above, in=99, out=115, looseness=15] node {$1$} (2cycles)
    (1nbr) edge[loop above, in=65, out=81, looseness=15] node {$3$} (1nbr)
    (1nbr) edge[loop above, in=99, out=115, looseness=15] node {$2$} (1nbr)
    (2nbrSource) edge[loop above, in=80, out=100, looseness=10] node {$3$} (2nbrSource);
\end{tikzpicture}
\caption{This shows the underlying directed graphs of $4$-vertex quivers with one frozen vertex $u$ up to isomorphism and arrow reversal, and how they change under cycle-preserving mutations. An arrow $Q \,\textcolor{gray}{\stackrel{i}{\rightarrow}}\, Q'$ means that if $i$ is cycle-preserving for $Q$ then $\mu_i(Q)$ has the same orientations as $Q'$ (up to isomorphism).}
\label{fig: cycle preserving on incomplete quivers}
\end{figure}

\begin{proof}[({Proof of \Cref{thm: mutation cyclic eventually sign coherent}})]
By \Cref{lem: cycle preserving tail} and \Cref{lem: complete tail}, there is some $i$ such that $Q^{(i-1)}_{\mathbf M}$ is complete and $\mathbf M' = m_im_{i+1}\cdots$ is cycle preserving.
%As $\mathbf{M}$ is cycle-preserving for $Q^{mut}$ the only potentially complete $4$-vertex subquivers include $Q^{mut}$ and one frozen vertex, 
By \Cref{lem: cycle preserving ascents avoid vortices} no mutation in $\mathbf M$ is at the apex of a vortex.
The conclusion follows from \Cref{thm: brog coherence}, since $Q^{(i)}_{\mathbf M}$ is then brog and hence becomes eventually sign-coherent.
\end{proof}

\begin{definition}
A $3$-vertex quiver is \emph{mutation-cyclic} if it is not mutation-acyclic. We will only use this notation for quivers with $3$ vertices.

If $Q^{\mut}$ is mutation-cyclic, then any mutation sequence is cycle-preserving on $Q$ and $Q^{\mut}$ is mutation-abundant.
\end{definition}

\begin{cor}
\label{cor: mutation cyclic eventually sign coherent}
Suppose $Q$ is a connected quiver with mutation-cyclic mutable part.
Then $Q$ is eventually sign-coherent on every reduced weakly balanced mutation sequence.
\end{cor}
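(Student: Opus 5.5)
The corollary should be a direct specialization of \Cref{thm: mutation cyclic eventually sign coherent}. The plan is to check its two hypotheses about $Q^{\mut}$ and $\mathbf M$: mutation-abundance of $Q^{\mut}$, and that $\mathbf M$ is cycle-preserving on $Q^{\mut}$. The remark after the definition of mutation-cyclic already asserts both, so the main work is to justify that remark using known facts about rank $3$ quivers.

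\textbf{Mutation-abundance.} If some quiver $Q'$ mutation-equivalent to $Q^{\mut}$ had a weight $b_{ij}(Q') \le 1$, I would show it is mutation-acyclic. I would use the classical rank $3$ facts (Beineke--Br\"ustle--Hille, or Warkentin's analysis).
\begin{itemize}
\item If $Q'$ is acyclic, this contradicts mutation-cyclicity immediately.
\item If $Q'$ is an oriented cycle with a weight $0$, it is actually acyclic.
\item If $Q'$ is an oriented cycle with a weight $1$, then $Q'$ is mutation-acyclic. The argument is that the Markov constant $C(Q') = a^2+b^2+c^2-abc$ satisfies $C \ge 4$ or one of $a,b,c$ is less than $2$, and cyclic quivers with a weight less than $2$ are mutation-acyclic.
\end{itemize}
Hence every weight in the mutation class is at least $2$.

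\textbf{Cycle-preservation.} By the same classification, every quiver in a mutation-cyclic class is an oriented $3$-cycle. An acyclic quiver would contradict the definition, and incomplete quivers are acyclic in rank $3$. So for any mutation sequence, each $(Q^{(\ell)}_{\mathbf M})^{\mut}$ is an oriented cycle, and in particular each mutation is cycle-preserving on the mutable part.

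With both hypotheses verified, \Cref{thm: mutation cyclic eventually sign coherent} applies directly to any reduced weakly balanced $\mathbf M$ and gives eventual sign coherence. The connectivity hypothesis transfers verbatim.

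The main obstacle is the step ``oriented cycle with a weight $<2$ is mutation-acyclic'', which I would cite from the rank $3$ literature rather than reprove.
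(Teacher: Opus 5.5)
Your proposal is correct and follows the paper's route: the paper gives no separate argument, deriving the corollary from \Cref{thm: mutation cyclic eventually sign coherent} via the remark after the definition of mutation-cyclic. That remark says that $Q^{\mut}$ is mutation-abundant and that every mutation sequence is cycle-preserving on $Q^{\mut}$, which are exactly the two hypotheses you verify. The step you flag as the main obstacle is elementary, and your aside about the Markov constant is not needed: in an oriented $3$-cycle with weights $1,b,c$, mutating at the appropriate endpoint of the weight-$1$ arrow gives an acyclic quiver, because the arrow opposite that vertex either disappears or acquires weight $|b-c|$ with the cycle-breaking orientation.
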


We now consider abundant acyclic quivers with $3$ vertices. We show that:

\begin{thm}
Let $Q$ be a rank $3$ quiver such that $Q^{\mut}$ is mutation equivalent to an abundant acyclic quiver, and $\mathbf M$ a reduced and weakly balanced mutation sequence of $Q$.
Then $Q$ is eventually sign-coherent on $\mathbf M$.
\end{thm}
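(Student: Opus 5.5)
The plan is to reduce this statement to \Cref{thm: mutation cyclic eventually sign coherent}. That theorem needs three things: the mutable part is mutation-abundant, the sequence is reduced and weakly balanced, and it is cycle-preserving on $Q^{\mut}$. Since sign coherence can be checked one frozen vertex at a time, as in the proof of \Cref{lem: cycle preserving tail}, I may assume throughout that $Q$ has a single frozen vertex $u$ whenever convenient.

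\textbf{Mutation-abundance.} If $Q^{\mut}$ is mutation equivalent to an abundant acyclic rank $3$ quiver, then every quiver in its class has all weights at least $2$. This is the standard description of the mutation class of an abundant acyclic triangle: one acyclic representative, with the remaining quivers forming the trees of forks from \Cref{lem:tree lemma}. So $Q^{\mut}$ is mutation-abundant, as in the remark after \Cref{def:mu-abundant}.

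\textbf{Cycle preservation on a tail.} The only obstruction is that $\mathbf M$ need not be cycle-preserving on $Q^{\mut}$. By \Cref{lem:tree lemma}, the exchange graph of $Q^{\mut}$ consists of a finite acyclic core plus infinite trees of forks. In the abundant acyclic case the core is the set of acyclic quivers, connected by sink and source mutations. Each tree is entered from an acyclic quiver by mutating at its elbow, which produces a fork whose point of return is that elbow. Once $\mathbf M$ enters a fork tree and never again mutates at the current point of return, \Cref{cor: cycle-preserving tree lem} makes every later mutation cycle-preserving on the mutable part. I will also check that sink and source mutations in an acyclic quiver are cycle-preserving, because there are no $3$-cycles through such a vertex. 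The plan is therefore to show that a reduced weakly balanced sequence eventually stays inside a single fork tree.

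\textbf{Leaving the acyclic core.} Being reduced, inside a fork tree the only way back toward the core is to mutate at the point of return, and that retraces the tree's unique path back. Within the acyclic core, consecutive reduced sink and source mutations on a complete acyclic rank $3$ quiver are forced: after mutating a source, the next choice is either the new source or an elbow. Any mutation at an elbow leaves the core. A reduced walk confined to the core would have to mutate only at sources and sinks forever, and this is still compatible with weak balance, for example the Coxeter sequence $\overline{123}$ on an acyclic quiver.

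\textbf{Main obstacle.} The hard case is a reduced sequence that never settles in a fork tree. Such a sequence keeps returning to the core, either by staying there forever through sink/source mutations or by repeatedly going out into a tree and climbing back. Climbing back means retracing a reduced path in a tree, which would require the sequence to undo itself. A careful version of this argument should show that a reduced sequence can return to the core only finitely often before committing. The purely sink/source tails are cycle-preserving anyway, since every quiver in such a tail is acyclic and so has no $3$-cycles to destroy. In either case some tail $m_i m_{i+1}\cdots$ is cycle-preserving on $(Q^{(i-1)}_{\mathbf M})^{\mut}$. Applying \Cref{thm: mutation cyclic eventually sign coherent} to $Q^{(i-1)}_{\mathbf M}$ with this tail then finishes the proof. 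I expect the main difficulty to be verifying that alternation between core and tree cannot occur infinitely often for a reduced sequence, and in particular ruling out walks that go back and forth across the core along its sink/source edges.
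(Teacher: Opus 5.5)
Your proposal is correct and follows the paper's route. The paper establishes exactly the dichotomy you are after (\Cref{lem:acyclic eventually cycle-preserving too}: either an acyclic tail of sink/source mutations, or a tail inside a single fork tree) and then applies \Cref{thm: mutation cyclic eventually sign coherent} to the resulting cycle-preserving tail. The step you flag as the main obstacle is immediate. By \Cref{lem:tree lemma}, a fork tree meets the rest of the exchange graph only through its root's return edge. Once a reduced walk enters a tree at its root, or takes one downward step inside it, the point of return of the current fork is always the vertex just mutated, so every later mutation goes one level deeper. Hence the walk reaches the core from a tree at most once, and after leaving the core it never returns. As a minor correction, sign coherence must be checked on pairs of frozen vertices, not single ones, although you never actually use that reduction.
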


We have the following dichotomy:

\begin{lem}
\label{lem:acyclic eventually cycle-preserving too}
Let $Q$ be a rank $3$ quiver with no frozen vertices that is mutation-abundant, and $\mathbf M$ a reduced mutation sequence of $Q$.
Then either 
\begin{enumerate}
\item $Q^{(i)}_{\mathbf M}$ is acyclic for all $i$ sufficiently large (and so each mutation $m_i$ is at a sink/source, and $\mathbf M$ is eventually the periodic sequence $\overline{123}$), or
\item $Q^{(i)}_{\mathbf M}$ is an oriented cycle for all $i$ sufficiently large.
\end{enumerate}
In either case, $\mathbf M$ is eventually cycle-preserving.
\end{lem}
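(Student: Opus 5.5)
The plan is to analyze how the shape of $Q^{(i)}_{\mathbf M}$ changes step by step. Since $Q$ is mutation-abundant, every quiver $Q^{(i)}_{\mathbf M}$ is complete with all weights at least $2$. So each $Q^{(i)}_{\mathbf M}$ is either acyclic (a transitive tournament on $3$ vertices) or an oriented $3$-cycle. The key fact I will use is the rank $3$ Markov-type computation. If $Q|_{ijk}$ is an oriented cycle with weights $a=b_{ij}$, $b=b_{jk}$, $c=b_{ki}$ (all at least $2$), then mutation at $i$ replaces $c$ by $ab-c$, and the result is again an oriented cycle iff $ab-c>0$. Mutation-abundance forces $ab-c\neq 0,\pm1$. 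Otherwise the result is acyclic with elbow $i$.

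Step 1 handles the case where some $Q^{(i_0)}_{\mathbf M}$ is acyclic. Reducedness and mutation-abundance severely constrain what comes next. Mutation at a source or sink keeps the quiver acyclic. Mutation at the elbow $j$ of an acyclic abundant quiver with $i\to j\to k$ produces the oriented cycle $\mu_j(Q)$. By \Cref{def:fork} and the inequality $b_{ik}+b_{ij}b_{jk}>\max(b_{ij},b_{jk})$, that cycle is a fork with point of return $j$. By the Tree Lemma (\Cref{lem:tree lemma}) and \Cref{cor: cycle-preserving tree lem}, every reduced continuation from it never mutates at the point of return, so it stays in the tree of forks. Forks here are oriented cycles, because $\mu_j$ at the point of return is the only way back to an acyclic quiver. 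So once we mutate at an elbow, we are in case (2) forever. If we never mutate at an elbow, every mutation is a sink/source mutation. In a $3$-vertex complete acyclic quiver, after a sink/source mutation at $m_i$, the reduced condition forbids $m_{i+1}=m_i$. Of the two remaining vertices exactly one is a source or sink, so the sequence is forced to be a cyclic rotation of $\overline{123}$ (up to relabeling). This gives case (1).

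Step 2 handles the case where every $Q^{(i)}_{\mathbf M}$ is an oriented cycle, which is case (2) directly. Otherwise some $Q^{(i)}_{\mathbf M}$ is acyclic, and Step 1 applies from that index. One must check that the sequence cannot oscillate between the two shapes infinitely often. An oriented cycle becoming acyclic means mutating at a point of return, and the next quiver then has that vertex as elbow. Reducedness forbids immediately undoing this, so we then do sink/source mutations (case (1) forever) or hit an elbow (case (2) forever). Thus at most one cycle-to-acyclic transition followed by one acyclic-to-cycle transition can occur. I will verify this with the monotonicity of weights in the fork tree: after entering the tree, the weights strictly increase along reduced paths, so no return occurs.

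Step 3 shows the sequence is eventually cycle-preserving. In case (2), every mutation from an oriented cycle yields an oriented cycle, so every mutation is cycle-preserving by definition. In case (1), the quivers are acyclic and there are no oriented $3$-cycles, so cycle preservation holds vacuously. I expect the main obstacle to be the bookkeeping in Step 2, namely ruling out repeated transitions. The cleanest route is to reduce it to the Tree Lemma by identifying the oriented cycle created by an elbow mutation as a fork whose point of return is exactly the vertex reducedness forbids next.
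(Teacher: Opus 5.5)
Your proposal is correct and follows the paper's argument. The only acyclic-to-cyclic transition is an elbow mutation, which yields a fork with point of return at the elbow; the Tree Lemma plus reducedness then keeps every later quiver a fork, hence an oriented cycle, and otherwise all mutations are sink/source mutations forced into the period-$3$ pattern. One small slip, harmless to the argument: with $a=b_{ij}$, $b=b_{jk}$, $c=b_{ki}$, mutation at $i$ replaces the opposite weight $b$ by $ac-b$ (your $ab-c$ is the formula for mutation at $j$), and forks here are oriented cycles simply because forks are non-acyclic by definition.
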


\begin{proof}
Suppose $Q^{(i-1)}_{\mathbf M}$ is acyclic but $Q^{(i)}_{\mathbf M} = \mu_{m_i}(Q^{(i-1)}_{\mathbf M})$ is not.
Then $m_i$ is the elbow in the abundant acyclic quiver $Q^{(i-1)}_{\mathbf M}$. 
So $Q^{(i)}_{\mathbf M}$ is a fork with point of return $m_i$, and the conclusion follows from \Cref{lem:tree lemma}.
\end{proof}

Given \Cref{lem:acyclic eventually cycle-preserving too} and \Cref{thm: mutation cyclic eventually sign coherent} we have the following.

\begin{cor}
\label{cor: rank 3 sign coherent}
Let $Q$ be a connected rank $3$ quiver whose mutable part is mutation-abundant, and let $\mathbf M$ be a reduced weakly balanced mutation sequence.
Then there is some $i$ such that $Q^{(j)}_{\mathbf M}$ is sign-coherent for all $j > i$.
\end{cor}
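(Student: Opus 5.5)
The plan is to reduce to \Cref{thm: mutation cyclic eventually sign coherent} by using the dichotomy of \Cref{lem:acyclic eventually cycle-preserving too}. We may assume $Q$ is connected; if not, handle each connected component of the mutable part separately, since sign coherence of a vertex only depends on its own $c$-vector and mutations in one component do not affect the arrows between frozen vertices and vertices in another component. Applying \Cref{lem:acyclic eventually cycle-preserving too} to $Q^{\mut}$ (which is mutation-abundant by hypothesis) along the reduced sequence $\mathbf M$, there is an index $i_0$ after which $\mathbf M$ is cycle-preserving on $(Q^{(i_0)}_{\mathbf M})^{\mut}$. Moreover, the mutable parts are then either all acyclic or all oriented cycles from $i_0$ on.

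\textbf{Oriented-cycle case.} Here the tail $m_{i_0+1}m_{i_0+2}\cdots$ is reduced, weakly balanced (a tail of a weakly balanced sequence), and cycle-preserving on the mutable part of $Q' = Q^{(i_0)}_{\mathbf M}$. Since $Q'^{\mut}$ is mutation-abundant, \Cref{thm: mutation cyclic eventually sign coherent} applies directly to $Q'$ and gives eventual sign coherence. This is essentially \Cref{cor: rank 3 sign coherent} specialized.

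\textbf{Acyclic case.} The tail consists of source/sink mutations and is eventually $\overline{123}$. Source/sink mutations on the mutable part are automatically cycle-preserving there, since there are no oriented 3-cycles in the mutable part to preserve. So the hypotheses of \Cref{thm: mutation cyclic eventually sign coherent} again hold for the tail, and we conclude the same way.

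The main point to double-check is that \Cref{thm: mutation cyclic eventually sign coherent}, and the lemmas feeding it (\Cref{lem:weakly balanced grows}, \Cref{lem: increasing sticky}, \Cref{lem: complete tail}), genuinely allow an acyclic mutable part. Their proofs do treat the case of a source/sink in $Q^{\mut}$ (Case 2 of \Cref{lem:weakly balanced grows}), so I expect this to go through. If some step does rely on the mutable part being cyclic, I would instead handle the periodic sequence $\overline{123}$ on an acyclic abundant mutable part by hand. In that situation each $3$-vertex subquiver $\{i,j,u\}$ with $u$ frozen evolves like a rank $2$ quiver, and by \Cref{lem: rank 2 ascents} the weights to $u$ eventually grow. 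After that, each mutation makes the mutated vertex red or green as in \Cref{lem:mutation makes more color}, and those colors persist by \Cref{lem: skip colored}.
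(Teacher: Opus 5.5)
Your proposal is correct and is the paper's argument: \Cref{lem:acyclic eventually cycle-preserving too} makes $\mathbf M$ eventually cycle-preserving on the mutable part, and then \Cref{thm: mutation cyclic eventually sign coherent} is applied to the reduced, weakly balanced tail starting at $Q^{(i_0)}_{\mathbf M}$. Your acyclic/cyclic case split and the fallback plan are unnecessary, because the lemma already gives eventual cycle preservation in both cases and the theorem places no cyclicity assumption on the mutable part.
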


\subsection{Rank 2 Quivers}
%Note that our earlier methods apply to all wild-type rank $2$ quivers (i.e., where $b_{ij} \geq 3$), while the present methods additionally handle the Kronecker case (where $b_{ij} = 2$).

\begin{prop}\label{prop: rank 2 acyclic}
Let $Q$ be a connected quiver with two mutable vertices $1,2$ and one frozen vertex $u$ such that $|b_{12}| \geq 2$. Then for any reduced mutation sequence $\mathbf M$, there are at most four values of $\ell$ such that $Q^{(\ell)}_{\mathbf M}$ is acyclic. 
%Then $Q$ is mutation-equivalent to at most $8$ quivers $Q'$ such that either $Q' \setminus \{u\}$ or $Q' \setminus \{v\}$ is acyclic.
\end{prop}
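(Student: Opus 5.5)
The plan is to study the reduced sequence, which in rank $2$ is forced to alternate, $\mathbf M = \overline{12}$ or $\overline{21}$, through the $3$-vertex quivers $Q^{(\ell)}_{\mathbf M}$ on $\{1,2,u\}$. The mutable part keeps weight $|b_{12}|=b\ge 2$ and only reverses orientation, so each $Q^{(\ell)}_{\mathbf M}$ is either acyclic or an oriented cycle. Write $x_\ell,y_\ell$ for the numbers of arrows between $u$ and the two mutable vertices. The goal is to show that once the sequence enters a ``growing'' regime it stays there, and that in this regime every quiver is an oriented cycle.

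\textbf{Step 1: a growing mutation forces an oriented cycle forever.} By \Cref{lem: rank 2 ascents}, if mutation at $m_\ell$ strictly increases the weight between $u$ and the other mutable vertex, then:
\begin{itemize}
\item $Q^{(\ell)}_{\mathbf M}$ is an oriented cycle;
\item the next mutation $m_{\ell+1}$ again strictly increases the other weight, by part (3).
\end{itemize}
Induction then shows that every later $Q^{(\ell')}_{\mathbf M}$ is an oriented cycle. So at most finitely many acyclic quivers occur, all before the first strictly increasing mutation.

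\textbf{Step 2: bound the steps before growth begins.} I expect this to be the main step. Each mutation either strictly increases, preserves, or strictly decreases the relevant weight.
\begin{itemize}
\item If the weight is preserved, we are in case (a) or (b) of \Cref{lem: rank 2 ascents}. In case (a) the mutated vertex is a source or sink of the whole $3$-vertex quiver. In case (b) the quiver is an oriented cycle with the equality $|b_{u1}b_{12}|=2|b_{2u}|$. In either case the next mutation does not decrease the weight.
\item If the weight strictly decreases, apply \Cref{lem: rank 2 ascents} to the reversed step, using that $\mu$ is an involution. The previous quiver $Q^{(\ell)}_{\mathbf M}$ is then an oriented cycle and the earlier mutation was itself decreasing. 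Hence decreasing mutations form an initial segment of $\mathbf M$, and every quiver visited before the last decreasing step is an oriented cycle.
\item Consequently $\mathbf M$ runs: decreasing steps (oriented cycles), then a non-increasing but non-decreasing plateau, then strictly increasing steps (oriented cycles).
\end{itemize}
Acyclic quivers can therefore only occur near the two transitions and on the plateau.

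\textbf{Step 3: the plateau is short.} Source/sink mutations in the $3$-vertex quiver $Q|_{12u}$ cycle through sources. Alternating $1,2$ source mutations with $u$ fixed can happen at most about twice before $u$ must become an elbow or the configuration forces growth. This should follow the same pattern as the source-mutation analysis in the proof of \Cref{lem:weakly balanced grows}: after at most two such moves the mutation becomes increasing. Case (b) happens at a single balanced equality, and by the reversed argument it sits at the boundary between decreasing and increasing.

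Counting gives at most two acyclic quivers on the plateau plus one at each transition, so at most four values of $\ell$. The delicate part is this final case count in Step 3; I would verify it by enumerating the orientations of $u$ relative to $1 \to 2$, namely source, sink, or one of the two elbow positions.
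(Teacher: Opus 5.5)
Your Steps 1 and 2 are sound, and they give a legitimate alternative to the paper's contiguity argument. The paper instead shows directly that once an acyclic quiver is followed by an oriented cycle, all later quivers are oriented cycles. Applying \Cref{lem: rank 2 ascents} forward, and backward via the involution, the only candidates for acyclic quivers are $Q^{(D)}_{\mathbf M},\dots,Q^{(D+P)}_{\mathbf M}$, where $m_{D+1},\dots,m_{D+P}$ is a run of weight-preserving moves. Case (b) moves go from oriented cycle to oriented cycle, so only runs of case (a) source/sink moves matter.

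The bound ``four'' then amounts to the claim that such a run has at most three moves, and Step 3 does not prove this. Worse, the claim you state there, that after at most two such moves the mutation becomes increasing, is false. Let $Q$ be $u\to 1\to 2$, with $b_{12}\ge 2$ and $u$ not adjacent to $2$, and take $\mathbf M=\overline{21}$. Then $Q^{(0)}_{\mathbf M},\dots,Q^{(3)}_{\mathbf M}$ are $u\to1\to2$, $u\to1\leftarrow2$, $u\leftarrow1\to2$, $u\leftarrow1\leftarrow2$, joined by the three weight-preserving moves $2,1,2$. Only $m_4=1$, now at an elbow, yields an oriented cycle. This is exactly the extremal case of the proposition.

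Your planned enumeration of $u$'s position relative to $1\to2$ (source, sink, elbow) only sees complete triangles, so it would miss this case. The appeal to \Cref{lem:weakly balanced grows}, a rank $3$ statement about $u$-ascents, does not transfer. The tally ``two on the plateau plus one at each transition'' is not derived from anything you proved; ``at most two moves'' would give three quivers.

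The missing ingredient is a direct analysis of such runs. Source/sink moves create and delete no arrows, and a move at an elbow of an acyclic $3$-vertex quiver always produces an oriented cycle. So a run lies entirely among complete quivers or entirely among incomplete ones.

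\emph{Complete case.} After a source move at $s$, the other mutable vertex now has an arrow to $s$ and cannot be a sink. So a run consists only of source moves, or only of sink moves. A source move sends (source, elbow, sink) $=(s,e,t)$ to $(e,t,s)$, so consecutive moves of the run are at $s,e,t,\dots$. Since $u$ is frozen, a run has at most two moves, i.e.\ at most three acyclic quivers.

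\emph{Incomplete case.} Suppose $u$ is adjacent only to $i$, and let $j$ be the other mutable vertex. A move at $j$ always preserves weights and toggles whether $i$ is an elbow. A move at $i$ preserves weights only if $i$ is not an elbow, and it does not change that status. Hence a run contains at most one move at $i$ and is at most $j,i,j$, giving four quivers as in the example.

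These are exactly the two counts the paper's proof asserts: three acyclic quivers linked by two source/sink mutations in the complete case, four in the incomplete case. With them, your Steps 1 and 2 finish the argument.
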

\begin{proof}
Note that there are only two reduced mutation sequences on $Q$, namely $\overline{12}$ and $\overline{21}$. 
%We claim that for all but at most four values of $i$, the quiver $Q^{(i)}_{\mathbf M}$ is cyclic.  
Fix an acyclic quiver in the mutation class. 
If it is complete, then there are three acyclic quivers, related by two sink/source mutations.  
If it is not complete, then there are four acyclic quivers related by sink or source mutations.
If $Q_\mathbf{M}^{(\ell-1)}$ is acyclic but $Q_\mathbf{M}^{(\ell)}$ is cyclic, then the edge opposite $m_\ell$ must have weight larger than the edge opposite $m_{\ell + 1}$ in $Q_\mathbf{M}^{(\ell)}$.  A simple induction then shows that $Q_\mathbf{M}^{(k)}$ is cyclic for $k \geq \ell$. 
%Performing the same analysis on $Q|_{12v}$ shows that for at most $8$ values of $i$, one of the $3$-vertex subquivers is acyclic.
\end{proof}

\begin{thm}[cf. {\cite[Theorem 3.5]{GekhtmanNakanishi}}]
  If $Q$ is a connected mutation-infinite quiver of rank $2$,  then the asymptotic sign coherence conjecture (\Cref{conj: asymptotic sign coherence}) holds for $Q$.  %Then the mutation sequence $\overline{12}$ is eventually sign coherent.
\end{thm}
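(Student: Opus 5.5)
In rank $2$ with $Q$ mutation-infinite we have $|b_{12}|\ge 3$. Every reduced infinite mutation sequence is $\overline{12}$ or $\overline{21}$, so it is automatically weakly balanced, and the asymptotic sign coherence conjecture reduces to showing that $Q$ is eventually sign-coherent on these two sequences. My plan is to mimic the rank $3$ argument, using the ice fork machinery of \Cref{sec:fork eventual coherence}. First I reduce to two frozen vertices. A mutable vertex is red or green exactly when it is red or green in every subquiver on the mutable part together with two frozen vertices, and there are only finitely many such pairs. So it suffices to find, for each pair $u,v$, a time after which $Q^{(\ell)}_{\mathbf M}|_{12uv}$ is sign-coherent.

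Second, I handle each single frozen vertex $u$. By \Cref{prop: rank 2 acyclic}, $Q^{(\ell)}_{\mathbf M}|_{12u}$ is an oriented cycle for all large $\ell$. I then apply \Cref{lem: rank 2 ascents} to the alternating sequence. Mutation never decreases $|b_{1u}|$ or $|b_{2u}|$ in the way that matters: once a mutation strictly increases the weight at the other vertex, every subsequent mutation strictly increases it again (part (3)). If instead the weights are preserved, we are in case (a) or (b), and a mutation at the other vertex does not decrease them. Since $|b_{12}|\ge 3$, I expect the equality case (b) to be unstable, so after finitely many steps the weights adjacent to $u$ grow strictly. Part (2) then gives $|b_{u,m_\ell}|\,|b_{12}| > 2|b_{u,\text{other}}|$. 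From this I want to extract that $Q^{(\ell)}_{\mathbf M}|_{12u}$ is a fork whose point of return is the vertex just mutated. Concretely, the weight opposite the last mutated vertex dominates: after mutating $m_\ell$, the new weight between $u$ and the other vertex is at least $|b_{12}||b_{u m_\ell}|-|b_{u,\text{other}}|$, which exceeds both $|b_{12}|$ and $|b_{u m_\ell}|$ once the weights are large.

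Third, I combine the two frozen vertices. For large $\ell$, both $Q^{(\ell)}|_{12u}$ and $Q^{(\ell)}|_{12v}$ are forks whose point of return is the last mutated vertex $m_\ell$. This is exactly the issue illustrated in \Cref{eg: ice forks in rank 2}, and the alternation fixes it: both subquivers have the same point of return $m_\ell$, so $Q^{(\ell)}|_{12uv}$ is an ice fork with point of return $m_\ell$. The remainder of $\mathbf M$ is reduced, so its next mutation $m_{\ell+1}\neq m_\ell$ is not at the point of return, and it contains both vertices. \Cref{thm:Asym Sign Coherence} then gives sign coherence for all sufficiently large indices. Alternatively, I could verify that the quiver is complete from some point on and invoke \Cref{thm: brog coherence} directly; in rank $2$ there are no vortices, and cycle-preservation follows from the growth above.

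I expect the main obstacle to be the second step, and specifically ruling out that the weights adjacent to $u$ stay bounded forever, i.e. that the sequence remains in case (a)/(b) of \Cref{lem: rank 2 ascents} indefinitely. I would first try to show that case (b), $|b_{u1}b_{12}| = 2|b_{2u}|$, cannot recur on consecutive steps when $|b_{12}|\ge 3$. Two consecutive equalities would force the weights to satisfy a linear recurrence with growth rate larger than $1$, which contradicts preservation. For the Kronecker-type boundary case $|b_{12}|=2$, which is not mutation-infinite, nothing is needed. If $u$ is isolated from the mutable part at some step, connectedness and the first mutation adjacent to $u$ restore adjacency, and that mutation is then an ascent.
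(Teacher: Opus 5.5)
There is a genuine gap. You discard the Kronecker case $|b_{12}|=2$ as ``not mutation-infinite'', but it is within the scope of the statement. A rank-$2$ quiver without frozen vertices is never mutation-infinite in the literal sense, so the hypothesis has to be read as infinite type. The paper's proof extracts only $|b_{12}|\ge 2$ from it. With frozen vertices attached, the Kronecker quiver generally has an infinite mutation class: a single arrow $v\to 1$ produces weights $1,2,3,4,\dots$ along $\overline{12}$.

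In the Kronecker case your Step 2 fails outright, not just the heuristic about case (b). For $a\ge 3$, the quiver $u\xrightarrow{a}1\xrightarrow{2}2\xrightarrow{a}u$ is returned to itself by $\mu_2\mu_1$. So $Q^{(\ell)}_{\mathbf M}|_{12u}$ is an oriented cycle with weights $2,a,a$ forever, and it is never a fork, since the fork inequalities are strict. Add a second frozen vertex $v$ as above, so that the whole quiver is mutation-infinite. Then $Q^{(\ell)}_{\mathbf M}|_{12uv}$ is never an ice fork, and \Cref{thm:Asym Sign Coherence} can never be invoked, even though this sequence is sign-coherent from the start.

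The repair is the fact you state at the start of Step 2 and then move past. By \Cref{prop: rank 2 acyclic}, for each frozen $u$ the subquiver $Q^{(\ell)}_{\mathbf M}|_{12u}$ is an oriented $3$-cycle for all large $\ell$. If, say, $1\to 2$ in $Q^{(\ell)}_{\mathbf M}$, every such cycle must be $1\to 2\to u\to 1$. So $1$ is red and $2$ is green with respect to all frozen vertices at once. That is sign coherence directly, for every $|b_{12}|\ge 2$, and it is exactly the paper's proof. The growth analysis via \Cref{lem: rank 2 ascents}, the ice fork, and \Cref{thm:Asym Sign Coherence} are not needed.

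For $|b_{12}|\ge 3$ your detour can be completed. Case (b) on two consecutive steps forces $|b_{12}|^2=4$. Once the weights increase, they satisfy $w_{k+1}\ge 2w_k+1$, so the fork inequalities eventually hold with the last mutated vertex as point of return. But that is a long route to a one-line conclusion.
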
 
\begin{proof}
 With out loss of generality, consider the mutation sequence $\mathbf M = \overline{12}$. Let $u_1,\dots,u_m$ be the frozen vertices of $Q$.  
%After finitely many mutations, all quivers $Q'$ appearing in the sequence must be such that both $Q' \setminus \{u\}$ or $Q' \setminus \{v\}$ are cyclic.  Thus the resulting quiver is sign-coherent.  \AB{It would be nice to do this in the language of forks.  However, I think having the Markov quiver as a subquiver poses a problem to such a proof.}
%\end{proof}

If each of the $3$-vertex subquivers $Q|_{12u_i}$ is an oriented cycle, then $Q$ is sign-coherent.
Thus it suffices to show that $Q|_{12u_i}$ will become and remain an oriented cycle after finitely many mutations.
Because $Q$ is mutation-infinite, a brief computation shows that the weight $|b_{12}|$ is at least $2$.

By \Cref{prop: rank 2 acyclic}, for a fixed $i \in [m]$, there are at most $4$ values of $\ell \in \N$ such that $Q_{\mathbf M}^{(\ell)}|_{12u_i}$ is acyclic.  Thus there are finitely many (in particular, at most $4m$) values of $i$ such that any subquiver $Q_{\mathbf M}^{(\ell)}|_{12u_i}$ is acyclic.  
\end{proof}

\iffalse
\begin{prop}
\label{prop: rank 2 eventually sign coherent}
\cS{accidental duplication of the above. Choose or mix/match}
Let $Q$ be a connected rank $2$ mutation infinite quiver with frozen vertices $u_1,u_2,\ldots,u_m$. Then both infinite reduced mutation sequences of $Q$ are eventually sign-coherent.
\end{prop}

\begin{proof}

If each of the $3$-vertex subquivers $Q|_{12u_i}$ is an oriented cycle, then $Q$ is sign-coherent. Thus it suffices to show that $Q|_{12u_i}$ will become and remain an oriented cycle after finitely many mutations. With out loss of generality, consider the mutation sequence $\mathbf M = \overline{12}$. Because $Q$ is mutation-infinite, a brief computation shows that the weight $|b_{12}|$ is at least $2$.

With out loss of generality, consider the mutation sequence $\mathbf M = \overline{12}$. 
There are three cases.
\begin{enumerate}
    \item If $Q|_{12u_i}$ is acyclic, then after at most $5$ mutations, $Q|_{12u_i}$ is an ice fork and remains so.
    \item If $Q|_{12u_i}$ is an oriented cycle and $|b_{1u_i}| \geq |b_{2u_i}|$, then $(Q|_{12u_i})^{(j)}_{\mathbf M}$ will be an oriented cycle for all $j$.
    \item if $Q|_{12u_i}$ is an oriented cycle and $|b_{1u_i}| < |b_{2u_i}|$, then mutation at $1$ reduces the number of arrows in $Q|_{12u_i}$. A mutation like this can only happen finitely many times consecutively before we arrive in one of the previous two cases.
\end{enumerate}
Therefore each of the subquivers  $Q|_{12u_i}$ is mutation infinite, and in particular is mutation equivalent to an ice fork.

\end{proof}
\fi

\subsection{Finite Ice-Forkless Part}

There are many quivers such that all but finitely many quivers in their mutation class are ice forks.  It follows quickly from work in \Cref{sec:fork eventual coherence} that these quivers satisfy the asymptotic sign coherence conjecture (\Cref{conj: asymptotic sign coherence}).  

\begin{cor}
Let $Q$ be a mutation-infinite quiver such that all but finitely many quivers in its mutation class are ice forks.  Then for any simple weakly balanced mutation sequence $\mathbf M$, $Q$ is eventually sign-coherent on $\mathbf M$.
\end{cor}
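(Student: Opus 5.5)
The plan is to reduce to \Cref{thm:Asym Sign Coherence}: first show that the walk eventually enters the ice-fork region through a non-return mutation, then invoke weak balance.

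\textbf{Step 1: the walk eventually stays among ice forks.} Since $\mathbf M$ is simple, it visits each quiver of the mutation class at most once. Only finitely many quivers are not ice forks, so there is an index $N$ with $Q^{(j)}_{\mathbf M}$ an ice fork for all $j \geq N$.

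\textbf{Step 2: some step out of an ice fork avoids the point of return.} I claim there is some $j \geq N$ such that $m_{j+1}$ is not the point of return of the ice fork $Q^{(j)}_{\mathbf M}$. Suppose instead that for every $j\ge N$, each mutation $m_{j+1}$ is at the point of return of $Q^{(j)}_{\mathbf M}$. I would use the remark following \Cref{lem:tree lemma}: mutating at the point of return moves one step toward the root of the $(n-1)$-ary tree of forks containing the current quiver. Each subquiver $Q|_{[n]\cup u_i}$ is a fork with the same point of return, so \Cref{lem:tree lemma} applies to it. Hence under point-of-return mutations the mutation distance from the root of the tree in $\Gamma-e$ strictly decreases. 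Repeatedly mutating at points of return therefore either leaves the tree in finitely many steps (reaching the root $Q'$ and then $\mu_r(Q')$) or gets stuck. This is the step I expect to be the main obstacle, since one must ensure that exiting a tree through its root still lands on an ice fork, whose point of return is then a new vertex, and that this cannot go on forever. I would handle it by a potential argument. Each exit through a root is a step closer to a finite ``core'' of the tree structure, and the only quivers adjacent to roots from outside the trees lie in the finite non-ice-fork set or form finitely many edges. Thus an infinite simple path consisting only of point-of-return mutations cannot exist, because it would have to descend a well-founded distance infinitely often. Concretely, I would bound the distance of $Q^{(j)}_{\mathbf M}$ to the finite ice-forkless set and show it strictly decreases under point-of-return mutations, contradicting that this set is eventually avoided.

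\textbf{Step 3: conclude.} Take $j\ge N$ from Step 2 and set $R = Q^{(j)}_{\mathbf M}$ and $\mathbf M' = m_{j+1}m_{j+2}\cdots$. Then $R$ is an ice fork, and $\mathbf M'$ is reduced, since $\mathbf M$ is simple. Its first mutation is not at the point of return. Weak balance gives an $i$ with $[n]\subseteq\{m_{j+1},\dots,m_{j+i}\}$. Applying \Cref{thm:Asym Sign Coherence} to $R$ and $\mathbf M'$ shows that $Q^{(k)}_{\mathbf M}$ is sign-coherent for all $k > j+i$.
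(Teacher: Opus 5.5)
Your Steps 1 and 3 are exactly the paper's proof. The paper covers your Step 2 with a single sentence: simplicity prevents mutating at a point of return after the first mutation. That sentence is stronger than what is true, since a simple path can climb several levels of a tree (and even cross a root edge) before turning downward. So you are right that Step 2 is where the content lies.

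Your concrete potential works when the ice-forkless set $B$ is nonempty. For an ice fork $F$ with point of return $r$, the tree of \Cref{lem:tree lemma} below $F$ consists of ice forks. It meets the rest of $\Gamma$ only in the edge $F$---$\mu_r(F)$. Hence $d(F,B)=1+d(\mu_r(F),B)$, and an unbroken run of point-of-return mutations would reach $B$ within $d(Q^{(N)}_{\mathbf M},B)$ steps, contradicting Step 1.

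The gap is that the hypothesis allows $B=\emptyset$, and this really occurs. Let $F$ have mutable $r,a,b$, frozen $c$, and arrows $r\xrightarrow{3}a\xrightarrow{4}b\xrightarrow{3}r$, $c\xrightarrow{3}r$, $a\xrightarrow{4}c$, $b\xrightarrow{2}c$. Both $F$ and $\mu_r(F)$ are ice forks with point of return $r$, so the two trees of \Cref{lem:tree lemma} exhaust the mutation class.

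In that case your distance is undefined and $\Gamma$ is an $n$-regular tree of ice forks. Your ``finite core'' is then exactly what needs proof. Purely graph-theoretically, the point-of-return pointers could all run off to a common end, which would give an infinite simple path of point-of-return mutations.

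You need a potential coming from the quivers themselves. One that works: if $F$ is a fork (one frozen vertex allowed) with point of return $r$ and $k\neq r$ is mutable, then $\mu_k$ strictly increases the total weight $\sum|b_{ab}|$. Here is why.
\begin{itemize}
\item $k$ is neither a source nor a sink of $F$, since otherwise the fork inequalities force $F^+(r)$ or $F^-(r)$ to be empty, making $F$ acyclic.
\item The only weights that change are $b'_{ij}=b_{ij}+b_{ik}b_{kj}$ for paths $i\to k\to j$.
\item If $b_{ij}<0$, then $F|_{ikj}$ is an oriented $3$-cycle. It must contain $r$, because all arrows between $F^+(r)$ and $F^-(r)$ point into $F^-(r)$.
\item For $i=r$ this gives $|b'_{rj}|=b_{rk}b_{kj}-b_{jr}>b_{jr}$, since $b_{rk}b_{kj}\ge 2b_{kj}>2b_{jr}$ by abundance and the fork inequality. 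The case $j=r$ is symmetric.
\end{itemize}

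Now suppose $m_{j+1}$ were the point of return of $Q^{(j)}_{\mathbf M}$ for every $j\ge N$. Reducedness gives $m_{j+1}\neq m_{j+2}$, so $Q^{(j)}_{\mathbf M}$ is the mutation of the ice fork $Q^{(j+1)}_{\mathbf M}$ away from its point of return. The total weight of $Q^{(j)}_{\mathbf M}|_{[n]\cup u}$ would then strictly decrease forever, which is impossible. This settles Step 2 in both cases at once, and your distance argument is no longer needed.
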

\begin{proof}
By the condition that $\mathbf M$ is simple, for sufficiently large $j$ all quivers $Q_{\mathbf M}^{(j)}$ must be ice forks.  Moreover, simplicity ensures that except for the first mutation, we do not mutate at the point of return of an ice fork.  Thus eventually $\mathbf M$ satisfies the hypotheses of \Cref{thm:Asym Sign Coherence}, hence $Q$ is eventually sign-coherent on $\mathbf M$.
\end{proof}

It is an interesting open problem to give a criterion for whether a mutation class has finite ice-forkless part.  For a given quiver, this property can often be verified directly. 

\begin{prop}
    Given a quiver with finite forkless part, freezing any vertices yields a quiver with finite ice-forkless part.  
\end{prop}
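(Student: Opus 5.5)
The plan is to reduce to individual frozen vertices and then transfer forkness of the mutable part to ice-forkness. Let $Q$ have finite forkless part, meaning all but finitely many quivers in the mutation class of $Q$ (with no frozen vertices) are forks. Let $Q^\dagger$ be obtained by freezing a nonempty set of vertices of $Q$, so $Q^{\dagger\,\mut}$ is a full subquiver of $Q$. The first step uses that mutation commutes with restriction to subquivers: for any mutation sequence $\mathbf M$ at mutable vertices of $Q^\dagger$, we have $(Q^\dagger)_{\mathbf M} = Q_{\mathbf M}$ with the same vertices declared frozen. In particular, for each frozen vertex $u$ of $Q^\dagger$, the subquiver $(Q^\dagger_{\mathbf M})|_{[n]\cup u}$ is the subquiver $(Q_{\mathbf M})|_{[n]\cup u}$ of a quiver in the mutation class of $Q$. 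Since the mutation class of $Q^\dagger$ is the image of a subset of the mutation class of $Q$ under the freezing map, and this map is injective on quivers (arrows between frozen vertices aside), it suffices to show: if $Q_{\mathbf M}$ is a fork whose point of return $r$ is mutable in $Q^\dagger$, then $Q^\dagger_{\mathbf M}$ is an ice fork. Indeed, only finitely many $Q_{\mathbf M}$ fail to be forks, and I will handle the fork case where $r$ is frozen separately.

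The key step is a restriction lemma for forks. Let $F$ be a fork with point of return $r$, and let $S$ be a vertex set containing $r$ such that $F|_S$ is non-acyclic. Then $F|_S$ is a fork with point of return $r$. Abundance and the two conditions of Definition~\ref{def:fork} are hereditary, since $F|_S^{\pm}(r) = F^{\pm}(r)\cap S$. Non-acyclicity is the only condition that can fail. I will apply this with $S = [n]\cup\{u\}$, treating $u$ as the single frozen vertex. Warkentin's forks allow at most one frozen vertex, and the inequalities do not depend on mutability. I need $F|_S$ to be non-acyclic, and I will use that every oriented $3$-cycle of a fork passes through $r$ (the argument of Proposition~\ref{prop: cycle preserving is like a por}). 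Since $F$ is abundant, hence complete, $r$ lies on an oriented $3$-cycle $r\to i\to j\to r$ provided $F^+(r)$ and $F^-(r)$ are both nonempty. Here I use $n\ge 3$ and the fork inequality to choose $i$ and $j$ inside $[n]\cup\{u\}$. If $F^+(r)\cap S$ or $F^-(r)\cap S$ is empty, then $r$ is a source or sink of $F|_S$, and $F|_S$ is acyclic. I expect this to be the main obstacle. Such configurations force $r$ to be a source or sink of the mutable part together with $u$.

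To handle the obstacle, I will argue that these configurations occur for only finitely many quivers in the class, via the Tree Lemma~\ref{lem:tree lemma}. Suppose $Q_{\mathbf M}$ is a fork and we mutate away from $r$. The Tree Lemma says all descendants are forks, and by Corollary~\ref{cor: cycle-preserving tree lem} they are reached by cycle-preserving mutations. Their point of return is the last mutated vertex, which is mutable. So descendants are ice forks once they stay complete on $[n]\cup u$, and Lemma~\ref{lem:mutation makes more color}-style orientation arguments keep the cycle through the new point of return. The exceptional forks therefore lie within bounded distance of the finitely many forkless quivers, namely roots of the fork trees adjacent to them. Each root has finitely many such near neighbors, so the ice-forkless part of the class of $Q^\dagger$ is finite.
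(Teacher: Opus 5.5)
The paper states this proposition without proof, so your argument has to stand on its own. Its setup is sound: mutation commutes with restriction, and for $S=[n]\cup\{u\}$ the restriction $P|_S$ of a fork $P$ with mutable point of return $r$ is a fork exactly when $r$ is neither a source nor a sink of $P|_S$. There is, however, a genuine gap at the step you flag yourself.

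\textbf{The obstacle and the missing bound.} The obstacle really occurs once there are two frozen vertices, so your first-paragraph claim (``it suffices to show a fork with mutable point of return becomes an ice fork'') is false, and everything rests on your last paragraph. There you show, in effect, that good behaviour propagates downward: if $P|_S$ is a fork with point of return $r$ and $k\neq r$ is mutable, then $\mu_k(P)|_S=\mu_k(P|_S)$ is a fork with point of return $k$. The right justification is Warkentin's lemma behind \Cref{lem:tree lemma}, applied to $P|_S$ with $u$ treated as mutable. Completeness is not the relevant condition, since forks are always complete, and \Cref{lem:mutation makes more color} is about colours, not cycles. You then conclude that exceptional forks ``therefore lie within bounded distance'' of the roots, but this does not follow: downward propagation says nothing about how long a branch can keep $P|_S$ acyclic.

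\textbf{How to close it.} If $P|_S$ is acyclic, it is complete acyclic, hence a linear order on $S$ with $r$ at one end. Mutating at a mutable $k\neq r$ that is not the other end turns $i\to k\to j$ (with $i\to j$) into the oriented cycle $i\to j\to k\to i$. So $\mu_k(P)|_S$ is a non-acyclic restriction of the fork $\mu_k(P)$, hence a fork. Consequently $P$ has at most one child that is again bad for $u$, obtained by mutating the opposite end. Iterating only rotates the order, and the chain stops once $u$ reaches that end, after at most $n-1$ steps. By propagation, the parent $\mu_r(P)$ of a $u$-bad fork is either $u$-bad or \emph{special}: a non-fork, a fork with frozen point of return, or a fork with the same point of return $r$. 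Every $u$-bad fork therefore lies within $n$ steps of a special quiver, which gives finiteness.

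\textbf{Cases your accounting misses.} Only the first kind of special quiver appears in your argument.
\begin{enumerate}
\item The frozen-point-of-return case is promised but never handled. It is easy: from such a $P$, every quiver in the class of $Q^\dagger$ is reached by a reduced word in mutable vertices. So the class is $P$ together with its fork subtree, and $P$ is the only such quiver.
\item If the class of $Q$ contains no non-forks, it consists of two fork trees joined by a root edge $F - \mu_r(F)$ with common point of return $r$. ``Bounded distance from the forkless quivers'' would then predict no exceptional forks, which is wrong. Take $r\to a$, $r\to b$, $c\to r$ of weight $10$, $a\to c$, $b\to c$ of weight $11$, and $a\to b$ of weight $2$. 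Both $F$ and $\mu_r(F)$ are forks with point of return $r$, so every quiver in the class is a fork. Yet with $b,c$ frozen, $F|_{\{r,a,b\}}$ is acyclic and $F$ is not an ice fork.
\end{enumerate}

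\textbf{A minor point.} The hypothesis $n\ge 3$ you invoke is not in the statement (the example above has $n=2$) and is not needed.
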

An infinite family of quivers which have finite forkless part are constructed in \cite[Example 5.13]{COQ}. 
Hence such quivers satisfy the asymptotic sign coherence conjecture.  Other families of quivers whose mutation classes have finite forkless part are constructed in \cite[Examples 10.1-10.2]{LMC} and \cite{FordyMarsh}.

\section{Asymptotic vs. eventual sign coherence}
\label{sec:asym vs eventual}
We now discuss the correspondence between Gekhtman and Nakanishi's original conjecture and our perspective.  Gekhtman and Nakanishi take the approach of fixing only the mutable part of the quiver and mutation sequence, then iterating over all choices of frozen vertices and their adjacent arrows.  Thus their conjecture is phrased asymptotically, as there is no uniform bound on the minimum number of mutations needed.  We take the approach of fixing the entire quiver (including the frozen portion) and mutation sequence, allowing us to view sign coherence as an eventual property.  We prove a precise equivalence between these setups.

%\cS{Include a remark that almost always asymptotic sign coherence is frustratingly different than almost always eventual sign coherence.} %Specifically, Warkentin's base case in 3.4 is a connected 3-vertex quiver with only one large weight. He shows 5 mutations suffice regardless of its orientation. To get a mutation sequence that converts a large subset of extensions to a fork, we need to give a mutation sequence independent of the initial orientations. ~10 Alternating mutations at the large weight suffice. %Alternatively, all a-vectors are converted to forks by some mutation sequence of finite (constant given n) length. So every interval of length n has some chance of being one of these. 
%Note that there's no fixed mutation sequence for a mutable part which sends each quiver with that mutable part and one frozen vertex to a fork. By contradiction: Fix such a sequence, perform it, add a frozen sink, mutate back.

To each integer $a \in \Z$, we naturally associate $\sign(a) \in \{+,0,-\}$ according to whether $a$ is positive, zero, or negative.  Given $\vec{a} = (a_1,\dots,a_n) \in \Z^n$ and a quiver $Q$ with no frozen vertices, let $\widehat Q$ denote the quiver obtained by adding a single frozen vertex $u$ with $a_i$ arrows from $i$ to $u$.  Given $\vec{a} \in \Z^n$, $k \in \Z_{\geq 0}$, and a mutation sequence $\mathbf M$ on a quiver $Q$, we set
$$\sigma_{\mathbf M}^{(k)}\left(\vec{a}\right) = \left(\sign\left(\widehat b_{1u}\right), \sign\left(\widehat b_{2u}\right), \dots, \sign\left(\widehat b_{nu}\right) \right)$$
where $\widehat B = (\widehat b_{ij})$ is the exchange matrix for $\widehat Q_{\mathbf M}^{(k)}$.

\begin{conj}[{\cite[Conjecture 2.3]{GekhtmanNakanishi}, Asymptotic Sign Coherence}]\label{conj: asymptotic sign coherence}
Let $Q$ be a connected quiver with no frozen vertices and $\mathbf M$ be a monotone and (weakly) balanced mutation sequence for the principal framing of $Q$.  For any pair of vector $\vec{a}, \vec{b} \in \Z^n$, there exists $T \in \N$ such that for all $k > T$, we have
$$\sigma_{\mathbf M}^{(k)}\big(\,\vec{a}\,\big) = \sigma_{\mathbf M}^{(k)}\big(\,\vec{b}\,\big)\,.$$    
\end{conj}

    In the original formulation of \Cref{conj: asymptotic sign coherence}, Gekhtman and Nakanishi instead assert that there exists a sequence of sign vectors $\sigma_{\text{reg}}^{(k)}$ such that for any $\vec{a} \in \Z^{n}$, there exists $T_{\vec{a}} \in \N$ such that $\sigma_{M}^{(k)}\big(\,\vec{a}\,\big)  = \sigma_{\text{reg}}^{(k)}$ for $k > T_{\vec{a}}$ (using the notation from \cite[Conjecture 2.3]{GekhtmanNakanishi}).  The statement above is equivalent: for one direction just set $\sigma_{\text{reg}}^{(k)} = \sigma_{\mathbf M}^{(k)}\big(\,\vec{b}\,\big) $, and for the other direction choose $T = \max(T_{\vec{a}}, T_{\vec{b}})$.
    
    Additionally, Gekhtman and Nakanishi posed their conjecture in the more general setting of \emph{skew-symmetrizable matrices} (see \cite[Section 2]{GekhtmanNakanishi} for more details), but our restatement is restricted to our setting of quivers (equivalently, \emph{skew-symmetric} matrices).  We expect our methods could be extended to the skew-symmetrizable setting; this would require generalizing Warkentin's results on forks \cite{Warkentin} to the skew-symmetrizable setting.

Note that for some fixed $Q$ and $\mathbf M$ as in \Cref{conj: asymptotic sign coherence}, there may not be a uniform bound on the parameter $T$ that works for all choices of $\vec{a}$ and $\vec{b}$.  This can be seen in \cite[Section 4]{GekhtmanNakanishi} for the Markov quiver, where the sign vector $(+,-,+)$ can persist for arbitrarily long finite periods before eventually becoming $(-,+,-)$.

\begin{rem}
Gekhtman and Nakanishi \cite{GekhtmanNakanishi} conjecture that every \emph{balanced} and \emph{monotone} mutation sequence eventually results in a sign-coherent quiver, and this property persists forever after.
There exists quivers with balanced and monotone sequences that never produce an ice fork. See Section~\ref{subsec:rank3 asymptotic}.
\end{rem}

Immediately following Conjecture~\ref{conj: asymptotic sign coherence}, Gekhtman and Nakanishi remark that it is motivated by the emergence of sign coherence when starting from particular quivers with frozen vertices.  From this perspective, it seems more natural to consider the setting where the fixed data to be a quiver including all frozen vertices and incident arrows.  We collect this phenomenon in the following equivalent conjecture.

\begin{conj}[Eventual sign coherence, equivalent to {\cite[Conjecture 2.3]{GekhtmanNakanishi}}]
\label{conj: eventual sign coherence}
Let $Q$ be a connected quiver with at least one frozen vertex and $\mathbf M$ a reduced (weakly) balanced and monotone mutation sequence. There exists $T$ such that $Q^{(j)}_{\mathbf M}$ is sign-coherent for all $j>T$.
\end{conj}

When $Q$ has just one frozen vertex $u$, the conjecture asserts that eventually every mutable vertex will be adjacent to $u$.

\begin{prop}
Asymptotic sign coherence (Conjecture~\ref{conj: asymptotic sign coherence}) is equivalent to 
Eventual sign coherence (Conjecture~\ref{conj: eventual sign coherence}).
\end{prop}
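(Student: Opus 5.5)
The plan is to prove both implications by translating between single frozen vertices and sign vectors. The basic observation is that mutation is linear in the frozen data. If $\widehat Q$ has frozen vertices $u_1,\dots,u_m$, the exchange matrix of $\widehat Q^{(k)}_{\mathbf M}$ restricted to the rows of the mutable vertices and the columns of frozen $u_\ell$ depends only on $Q^{\mut}$, $\mathbf M$, and the initial column $\vec a^{(\ell)}$. The arrows between frozen vertices are ignored, so each frozen column evolves independently. This column is computed exactly as in the one-frozen-vertex quiver $\widehat Q$ attached to $\vec a^{(\ell)}$. So the column of $u_\ell$ at step $k$ has sign vector $\sigma^{(k)}_{\mathbf M}(\vec a^{(\ell)})$.

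Sign coherence of $\widehat Q^{(k)}_{\mathbf M}$ is then a statement about these sign vectors. It means that for each mutable $i$, the signs $\sigma^{(k)}_{\mathbf M}(\vec a^{(\ell)})_i$ over all $\ell$ are not both $+$ and $-$, and are not all $0$ (the $c$-vector is nonzero). The monotonicity and balancedness hypotheses concern only $Q^{\mut}$ and $\mathbf M$, so they transfer verbatim between the two conjectures.

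\emph{Asymptotic $\Rightarrow$ eventual.} Let $Q$ have frozen columns $\vec a^{(1)},\dots,\vec a^{(m)}$. Apply Conjecture~\ref{conj: asymptotic sign coherence} to each pair $(\vec a^{(\ell)},\vec a^{(1)})$ and take $T$ to be the maximum of the resulting thresholds. For $k>T$ all columns then share the sign vector $\sigma^{(k)}_{\mathbf M}(\vec a^{(1)})$, so no vertex has mixed signs. We still need each $c$-vector to be nonzero, i.e.\ each entry of $\sigma^{(k)}(\vec a^{(1)})$ to be nonzero eventually. Since $Q$ is connected, some column $\vec a=\vec a^{(\ell)}$ is nonzero. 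Compare $\vec a$ with $\vec b=\vec a+N e_i$ for large $N$, and with the standard basis vectors $e_i$. Principal coefficients give nonzero $c$-vectors, which are sign-coherent by the classical theorem \cite{DWZ2,GHKK}. By linearity the column at step $k$ for $\vec b$ equals the column for $\vec a$ plus $N$ times the $i$-th principal $c$-vector column. I expect this zero-entry issue to be the main obstacle.

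The first thing I would try for the obstacle is to argue that, by monotonicity, the mutation matrix applied to any nonzero vector cannot have a zero coordinate infinitely often once the signs agree with the nonzero principal sign pattern. Failing that, I would read Conjecture~\ref{conj: eventual sign coherence} with ``sign-coherent'' meaning no mixed signs, so that the frozen vertex eventually becomes adjacent to every mutable vertex, as the remark after Conjecture~\ref{conj: eventual sign coherence} suggests. Under that reading, the zero case would be handled using the principal framing, whose sign vectors are eventually all nonzero by applying the pairwise agreement to $e_1,\dots,e_n$.

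\emph{Eventual $\Rightarrow$ asymptotic.} Given $\vec a,\vec b\in\Z^n$, not both zero, build the quiver $\widehat Q$ with two frozen vertices $u,v$ whose columns are $\vec a$ and $\vec b$. If one of them is zero, replace it by $\vec a+\vec b$ and use linearity. Adjoin the principal framing as extra frozen vertices so that $\widehat Q$ is connected. Conjecture~\ref{conj: eventual sign coherence} then gives $T$ such that for $k>T$ every mutable vertex has no mixed signs and nonzero $c$-vector. We need $\sigma^{(k)}(\vec a)=\sigma^{(k)}(\vec b)$ exactly, including zeros, so agreement of nonzero signs is not enough. To get it, I would apply the conjecture separately to the one-frozen quivers $\widehat Q_{\vec a}$ and $\widehat Q_{\vec b}$, each after adding the principal frozen vertices. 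This shows that the columns for $\vec a$ and for $\vec b$ are eventually entrywise nonzero (using the one-frozen remark), hence each eventually agrees in sign with the principal $c$-vector sign pattern. Therefore they agree with each other.
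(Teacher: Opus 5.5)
Your skeleton matches the paper's: apply Conjecture~\ref{conj: asymptotic sign coherence} to pairs of frozen columns and take the largest threshold, and conversely encode $\vec a,\vec b$ as two frozen vertices. However, two facts you rely on are false, and they constitute genuine gaps.

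\textbf{Mutation is not linear in the frozen data.} Frozen columns do evolve independently of one another, but the update of a column under $\mu_k$ depends on the sign of its $k$-th entry. For example, take $Q^{\mut}$ a single arrow $1\to 2$ and one frozen vertex with column $(a_1,a_2)$. Mutation at $1$ gives $(-a_1,\,a_2-\max(-a_1,0))$, so $e_1\mapsto(-1,0)$ and $-e_1\mapsto(1,-1)$. Their sum is not the image $0$ of $e_1+(-e_1)$. Additivity is only guaranteed as long as the columns never have opposite signs at the mutated vertex, which is the very sign agreement at issue. So your identity for the column of $\vec a+Ne_i$ fails. Nor can a zero vector be ``replaced by $\vec a+\vec b$''. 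A zero column stays zero, while a nonzero column never becomes zero, since mutation is invertible and the mutable parts coincide. So for $\vec b=\vec 0\neq\vec a$ the required equality never holds, and the conjecture must be read for nonzero vectors.

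\textbf{Monotonicity is not determined by $Q^{\mut}$ and $\mathbf M$.} It is measured in the mutation graph of the framed quiver, which depends on the frozen arrows. For instance, mutating at $1$ and then $2$ returns the unframed quiver $1\to 2$ to itself, but not its principal framing. The paper's forward direction supplies exactly this missing ingredient via \cite[Theorem 4.6]{FZiv}: a sequence monotone for $Q$ is monotone for its principal framing, as Conjecture~\ref{conj: asymptotic sign coherence} requires.

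\textbf{The zero-entry obstacle.} The obstacle you flag is genuine; the paper passes over it. It needs neither linearity nor a reinterpretation of ``sign-coherent,'' and your last idea works once made precise.
\begin{itemize}
\item Apply the hypothesis also to the pairs $(e_i,e_1)$ and $(\vec a(u),e_1)$. For $k$ large, all these sign vectors coincide.
\item The $j$-th entries of $\sigma^{(k)}_{\mathbf M}(e_1),\dots,\sigma^{(k)}_{\mathbf M}(e_n)$ are the signs of the entries of the principal $c$-vector of $j$. That $c$-vector is nonzero by classical sign coherence. Since its entries share one sign, that sign is nonzero.
\item Hence the common sign vector has no zeros, and every frozen column of $Q^{(k)}_{\mathbf M}$ has it. So each mutable vertex is red or green.
\end{itemize}

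\textbf{The converse direction.} Your care about zeros here likewise goes beyond the paper, which simply asserts that the two-frozen quiver is sign-coherent exactly when $\sigma^{(k)}_{\mathbf M}(\vec a)=\sigma^{(k)}_{\mathbf M}(\vec b)$. But the one-frozen remark needs a single frozen vertex. After you adjoin the principal frozen vertices, it no longer forces the $\vec a$-column to be entrywise nonzero. Without them, you need $\mathbf M$ to be monotone for $\widehat Q_{\vec a}$ itself, and the covering argument does not provide this: it only transfers monotonicity toward the principal framing.
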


\begin{proof}
(Conjecture~\ref{conj: asymptotic sign coherence} $\implies$ Conjecture~\ref{conj: eventual sign coherence}) 
Let $Q$ have rank $n$ with exchange matrix $B = (b_{ij})$. As $\mathbf M$ is monotone for $Q$, by \cite[Theorem 4.6]{FZiv} it is still monotone for the principal framing of $Q$.
For each frozen vertex $u$, define $\vec{a}(u) = (b_{iu})$.
By Conjecture~\ref{conj: asymptotic sign coherence}, there is some $T_{\vec{a}(u) \vec{a}(v)}$ for each pair $u,v$ of frozen vertices such that $(Q|_{[n] \cup \{u,v\}})_{\mathbf M}^{(j)}$ is sign-coherent for $j > T_{\vec{a}(u) \vec{a}(v)}$.
Thus $Q$ is sign-coherent for all $j > \max_{u,v}(T_{\vec{a}(u) \vec{a}(v)})$.
% % this doesn't hold when we condition on M monotone in different ways; but there might be some way to show that green -> green implies red -> red implies monotone for principal framing = montone for all frozen additions

(Conjecture~\ref{conj: eventual sign coherence} $\implies$ Conjecture~\ref{conj: asymptotic sign coherence}) 
Consider the quiver $R$ with $R^{\mut} = Q$ and two frozen vertices $u_a,u_b$ which are connected to the mutable vertices as described by $\vec{a}, \vec{b}.$
Then $Q'^{(j)}_{\mathbf M}$ is sign-coherent if and only if $\sigma^{(j)}_{\mathbf M}\big ( \, \vec a \,\big) = \sigma^{(j)}_{\mathbf M}\big ( \, \vec b \,\big)$.
\end{proof}

We note that Theorem~\ref{thm:Asym Sign Coherence} establishes that:
for each mutation infinite quiver $Q$ with no frozen vertices and any quiver $R$ with $R^{\mut}=Q^{\mut}$, for almost all reduced and (weakly) balanced mutation sequences $\mathbf M$ we have $Q^{(j)}_{\mathbf M}$ is sign-coherent for $j$ large enough.
This is subtly different from the similar claim (which we do not prove): 
Given a mutation infinite quiver $Q$ with no frozen vertices, then for almost all reduced and (weakly) balanced mutation sequences $\mathbf M$ and any nonzero $\vec a, \vec b \in \Z^n$, we have that $\sigma^{(j)}_{\mathbf M}\big ( \, \vec a \,\big) = \sigma^{(j)}_{\mathbf M}\big ( \, \vec b \,\big)$ for $j$ sufficiently large.
(We have made it so the mutation sequences no longer depend on the choice of arrows to the frozen vertices.)
It could be that this conclusion fails to hold for each mutation sequence because of just a small collection of pairs $\vec a, \vec b$.

\bibliographystyle{plain}
\bibliography{bibliography.bib}

\end{document}